\documentclass[10pt]{amsart}
\usepackage[a4paper,top=4cm,bottom=4cm,left=2.5cm,right=2.5cm]{geometry}
\usepackage{amsthm}
\usepackage[colorlinks=true,urlcolor=blue,citecolor=red,linkcolor=blue,linktocpage,pdfpagelabels,bookmarksnumbered,bookmarksopen]{hyperref}
\usepackage[english]{babel}
\usepackage{graphicx}
\usepackage{fancyhdr}
\usepackage{setspace}
\usepackage{xcolor}
\usepackage{amssymb,amsmath,mathtools}
\usepackage{physics}
\usepackage{esint}
\usepackage{latexsym}
\usepackage[mathscr]{eucal}
\usepackage{mathrsfs}
\usepackage{enumitem}
\usepackage{microtype}

\DeclareMathOperator\dist{dist}
\allowdisplaybreaks
\addtocontents{toc}{\protect\setcounter{tocdepth}{1}}
\def\mean_#1{\mathchoice%
          {\mathop{\kern 0.2em\vrule width 0.6em height 0.69678ex depth -0.58065ex
                  \kern -0.8em \intop}\nolimits_{\kern -0.4em#1}}%
          {\mathop{\kern 0.1em\vrule width 0.5em height 0.69678ex depth -0.60387ex
                  \kern -0.6em \intop}\nolimits_{#1}}%
          {\mathop{\kern 0.1em\vrule width 0.5em height 0.69678ex
              depth -0.60387ex
                  \kern -0.6em \intop}\nolimits_{#1}}%
          {\mathop{\kern 0.1em\vrule width 0.5em height 0.69678ex depth -0.60387ex
                  \kern -0.6em \intop}\nolimits_{#1}}}

\theoremstyle{plain}
\newtheorem{Theorem}{Theorem}[section]
\newtheorem{Proposition}[Theorem]{Proposition}
\newtheorem{Lemma}[Theorem]{Lemma}
\newtheorem{Corollary}[Theorem]{Corollary}
\theoremstyle{definition}
\newtheorem{Definition}[Theorem]{Definition}
\newtheorem{Remark}[Theorem]{Remark}

\DeclareMathSymbol{\Finv}{\mathord}{AMSb}{"60}
\numberwithin{equation}{section}
\newcommand{\N}{\mathbb{N}}

\newcommand{\R}{\mathbb{R}}

\newcommand{\Div}{\operatorname{div}}

\newcommand{\loc}{\mathrm{loc}}

\newcommand{\capM}{\operatorname{cap}_{M}}
\newcommand{\capD}{\operatorname{cap}_{D}}
\newcommand{\CAPD}{\operatorname{CAP}_{D}}

\newcommand{\WM}{W_M}
\newcommand{\VD}{V_D(\Omega)}
\newcommand{\WD}{W_D}
\newcommand{\WtD}{\widetilde W_D}
\newcommand{\VM}{V_M}
\newcommand{\EnergyM}{\mathcal{E}_M}
\newcommand{\Dp}{d_p}
\newcommand{\calH}{\mathcal H}
\newcommand{\supp}{\operatorname{spt}}

\newcommand{\homnorm}[1]{\lVert #1\rVert_{W_M}}

\title[Duality Conditions for Morrey Measures via Parabolic Capacity]{Duality Conditions for Morrey Measures \\via Parabolic Capacity}
\author{Lorenzo Braglia}
\address[L. Braglia]{Dipartimento di Matematica e Informatica, Università degli Studi di Ferrara, Via Machiavelli 35, 44121 Ferrara, Italy}
\email{lorenzo.braglia@unife.it}
\date{\today}
\subjclass[2020]{35R06, 35K55; 31C15, 31C45.}
\keywords{Morrey measures, parabolic capacity, duality, measure data,
variational solutions}

\begin{document}

\begin{abstract}
We establish Morrey-type conditions ensuring that a finite signed
Radon measure belongs to the dual of the energy space
of solutions to nonlinear parabolic equations of $p$-Laplacian type. More precisely, for the
parabolic cylinders
\[
Q_{r,r^p}(z):=B_r(x)\times(t-r^p,t+r^p),
\]
we prove that
\[
|\mu|\bigl(Q_{r,r^p}(z)\cap(\Omega\times (0,T))\bigr)
\leq Mr^{n+p-\vartheta},
\qquad
\vartheta<p,
\]
implies duality, and this threshold is sharp in general. More generally, for cylinders with
time length proportional to $r^q$, $q>1$, the sufficient threshold is
$\vartheta<\min\{p,q\}$. Finally, we discuss the resulting variational, energy, and
renormalized solution theories for nonlinear parabolic equations with
measure data, relating our conclusions to the existing literature.
\end{abstract}

\maketitle
\section{Introduction}

The model problem motivating this work is the Cauchy--Dirichlet problem associated with the non-homogeneous equation 
\[
\partial_tu-\Div a(x,t,\nabla u)=\mu\qquad\text{in $\Omega_T=\Omega\times(0,T)$}
\]
where $\Omega$ is a bounded Lipschitz domain of $\R^n$, $\mu$ is a finite signed Radon measure on $\Omega_T$ and the vector field $a$ has $p$-growth, with prototype
\begin{equation}\label{p.laplacian}
a(x,t,\nabla u)= \bigl(s^2+|\nabla u|^2\bigr)^{\frac{p-2}{2}}\nabla u
\end{equation}
for $1<p<\infty$, see Section \ref{sec:pde-consequences} for the precise assumptions. 

\medskip

Under suitable assumptions on the right-hand side $\mu$, the problem
can be formulated in the natural energy spaces associated with the
equation, and solutions can be constructed by direct monotonicity
methods. In general, however, a finite Radon measure $\mu$ need not satisfy a duality condition such as
\begin{equation}\label{lionsrep}
\mu\in L^{p'}(0,T;W^{-1,p'}(\Omega)),\qquad W^{-1,p'}(\Omega):= (W_0^{1,p}(\Omega))';
\end{equation}
our aim is to identify density conditions on $\mu$ ensuring that  it defines a continuous linear functional on the parabolic energy space $\WM(\Omega_T)$ introduced below. Under condition \eqref{lionsrep}, \cite[Chapter~II, Theorem~1.2 bis]{JLL} yields a unique energy solution in $L^p(0,T;W^{1,p}_0(\Omega))$ for every initial datum $u_0\in L^2(\Omega)$. To apply this result throughout the full range $1<p<\infty$, we take 
 \[
V:=W^{1,p}_0(\Omega)\cap L^2(\Omega),\qquad H:=L^2(\Omega)
\]
and we obtain $u\in L^p(0,T;V)\cap C([0,T];L^2(\Omega))$. Moreover, the growth assumptions in \eqref{p.laplacian} (see the forthcoming \eqref{assumptionsa}), together with \eqref{lionsrep}, imply through the equation that $\partial_tu\in L^{p'}(0,T;W^{-1,p'}(\Omega))$, rather than merely in the {\em a priori} larger space $L^{p'}(0,T;V')$ dictated by the choice of $V$. We retain this sharper information in the definition of $W_M(\Omega_T)$ below. Since duality reverses inclusions, pinning $\partial_t u$ to the smaller space $W^{-1,p'}(\Omega)$ keeps $W_M(\Omega_T)$ correspondingly smaller, and therefore enlarges its dual: this is precisely the space needed to accommodate the wider class of measures $\mu$ we aim to treat. This is a genuine gain only for $1<p<2n/(n+2)$, where the space $W^{-1,p'}(\Omega)$ is strictly smaller than $V'$; for $p\ge 2n/(n+2)$, the two spaces for the time derivative coincide, see the last paragraph in Section \ref{sec:duality}. We therefore set
\begin{equation*}
\WM(\Omega_T)
:=
\left\{
 u\in L^p(0,T;W^{1,p}_0(\Omega))\cap L^\infty(0,T;L^2(\Omega)):
 \partial_tu\in L^{p'}(0,T;W^{-1,p'}(\Omega))
\right\}
\end{equation*}
and we naturally endow the space with the homogeneous Banach norm
\begin{equation}\label{eq:intro-WM-norm}
\homnorm{u}
:=
\|\nabla u\|_{L^p(\Omega_T)}
+
\|\partial_tu\|_{L^{p'}(0,T;W^{-1,p'}(\Omega))}
+
\|u\|_{L^\infty(0,T;L^2(\Omega))}.
\end{equation}
Condition \eqref{lionsrep} entails $\mu\in W'_M(\Omega_T)$ a fortiori, but the latter also accommodates measures failing \eqref{lionsrep}, that is the class of interest here; for such data, the theory of \cite{DPP} provides existence and uniqueness of variational solutions for $u_0\in L^2(\Omega)$. For finite measures that do not charge sets of zero parabolic capacity, the same work establishes existence and uniqueness of renormalized solutions for $u_0\in L^1(\Omega)$; we discuss these solution frameworks and their relation to our duality criterion in Section \ref{sec:pde-consequences}. 

\medskip

Consequently, a useful geometric criterion for a measure should ideally answer two questions at once:
\begin{enumerate}
\item does the measure act continuously on the parabolic energy space $W_M(\Omega_T)$?
\item does it avoid the polar (i.e., the negligible) sets of the associated parabolic capacity?
\end{enumerate}
The Morrey condition of Definition \ref{ass:intrinsic-morrey} provides affirmative answers to both questions, and therefore will be the object of this study. 

\subsection{Elliptic duality and nonlinear potentials}

The stationary problem already shows why explicit duality conditions are
valuable. For an elliptic operator with $p$-growth, the natural energy space is
$W^{1,p}_0(\Omega)$ and its data space is
$W^{-1,p'}(\Omega)$. Sobolev embedding gives elementary sufficient
conditions: if $1<p<n$, then
\[
L^{(p^*)'}(\Omega)\hookrightarrow W^{-1,p'}(\Omega),
\qquad
p^*:=\frac{np}{n-p}.
\]
If $p=n$, every $L^m$ datum with $m>1$ acts continuously on $W_0^{1,p}(\Omega)$, while for $p>n$
the embedding $W^{1,p}_0(\Omega)\hookrightarrow C^{0,1-\frac{n}{p}}(\overline\Omega)$ permits
every finite Radon measure to act on the energy space.

For positive measures there are much finer criteria. In the range
$1<p\le n$, the nonlinear potential characterization can be written as
\begin{equation}\label{eq:intro-Wolff}
\bar \mu\in W^{-1,p'}(\R^n)
\quad\Longleftrightarrow\quad
\int_\Omega \mathbf W^{\bar \mu}_{1,p}(x,R)\,d\mu(x)=\int_\Omega\, \int_0^R
\left(\frac{\bar \mu(B_\varrho(x))}{\varrho^{n-p}}\right)^{1/(p-1)}\,\frac{d\varrho}{\varrho}\,d\mu(x)<\infty,
\end{equation}
where $R>0$ is fixed and $\bar \mu$ denotes the zero extension of $\mu$ to $\R^n\smallsetminus \Omega$, see \cite{HW} or \cite[Theorem 4.7.5]{Z}. More general trace inequalities and their
nonlinear-potential characterizations were developed by Cascante, Ortega and Verbitsky in \cite{COV}, and by Verbitsky in \cite[Theorem 1.13]{V}. The elliptic Morrey bound
\[
\mu(B_r(x))\le C r^{n-\vartheta},
\qquad
\vartheta<p,
\]
is precisely the power condition suggested by \eqref{eq:intro-Wolff}. For $1<p<n$, Adams' trace inequality actually yields the stronger embedding $W^{1,p}_0(\Omega)\hookrightarrow L^q(\Omega,d\mu)$, with $q=p(n-\vartheta)/(n-p)>p$; see \cite[Theorem~4.7.2]{Z}. Membership in $W^{-1,p'}(\Omega)$ requires only the corresponding $L^1(d\mu)$ trace estimate, so the above Morrey condition is sufficient but not necessary for duality. For instance, let $\Omega=B_1(0)$ and $1<p<n$: the measure $d\mu=|x|^{-p}\,dx$ belongs to $W^{-1,p'}(B_1)$, since $|x|^{-p}\in L^{(p^*)'}(B_1)$. However,
\[
\mu(B_r(0))=c(n,p)r^{n-p},\qquad 0<r<1,
\]
which rules out the above Morrey bound for every
$\vartheta<p$.

\medskip

The parabolic problem is more delicate. Although a potential-theoretic characterization of measure data in $\WM'(\Omega_T)$ comparable to \eqref{eq:intro-Wolff} still appears to be out of reach to our knowledge, some recent works have begun to address related aspects; see for example \cite{AdSF}. The authors of \cite{DPP} introduced a functional capacity adapted to nonlinear evolution problems and used it to characterize soft measures and renormalized solutions. Kinnunen, Korte, Kuusi and Parviainen defined a nonlinear capacity through measure-data problems and capacitary potentials; see \cite{KKKP} and also \cite{AKP}. More recently, Moring and Scheven in \cite{MS} constructed a variational capacity and established its relation with nonlinear capacity and with parabolic Hausdorff measure. The present work contributes to this line of investigation.

\subsection{Morrey scales and the main results}

For $q>1$, $z=(x,t)\in\R^{n+1}$, and $r>0$, we use the notation
\begin{equation*}
Q_{r,r^q}(z):=B_r(x)\times(t-r^q,t+r^q);
\end{equation*}
these cylinders are the balls of the metric
\begin{equation}\label{eq:dp}
d_q((x,t),(y,s)):=\max\big\{|x-y|,|t-s|^{1/q}\big\}.
\end{equation}
We call parabolic cylinders those of the form $Q_{r,r^p}(z)$, while caloric cylinders are the ones corresponding to $Q_{r,r^2}(z)$. In what follows, we always specify the time scale we are considering.
\begin{Definition}[Morrey condition]\label{ass:intrinsic-morrey}
Let $1<q<\infty$ and let $\mu$ be a finite signed Radon measure on $\Omega_T$. We say that $\mu$
satisfies the Morrey condition (on $(r,r^q)-$cylinders) with parameter $\vartheta$, if there
exists $M>0$ such that
\begin{equation}\label{eq:intrinsic-morrey}
|\mu|\bigl(Q_{r,r^q}(z)\cap\Omega_T\bigr)
\le M r^{n+q-\vartheta},
\end{equation}
for every $z\in\Omega_T$ and every $r>0$.
\end{Definition}
We now state the main result of this paper.
\begin{Theorem}\label{thm:intrinsic-duality}
Let $1<p,q<\infty$ and let $\mu$ be a finite signed Radon measure on a bounded open set $\Omega_T$ satisfying \eqref{eq:intrinsic-morrey} on $(r,r^q)$-cylinders with $\vartheta<\min\{p,q\}$, and let $\Omega$ be a Lipschitz domain. Then, denoting by $\widetilde u$ the quasi-continuous representative associated with the parabolic capacity (see Theorem \ref{thm:DPP-representative}),
\begin{equation}\label{maingoal}
\biggl|\int_{\Omega_T}\widetilde u\,d\mu\biggr|
\leq
C\|u\|_{\WM(\Omega_T)},\quad\text{for every } u\in\WM(\Omega_T),
\end{equation}
where $C$ depends only on
$n,p,q,\Omega,T,\vartheta,M$, and $|\mu|(\Omega_T)$. Consequently, the map
\[
L_\mu(u):=\int_{\Omega_T}\widetilde u\,d\mu
\]
is a well-defined continuous linear functional on
$\WM(\Omega_T)$, and in particular $\mu\in\WM'(\Omega_T)$.
\end{Theorem}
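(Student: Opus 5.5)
Throughout, write $\|u\|:=\homnorm{u}$. The plan is to reduce \eqref{maingoal} to a trace-type estimate for smooth functions and then pass to the limit using quasi-continuity.

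\textbf{Step 1: reduction to smooth functions.} We first prove
\[
\int_{\Omega_T}|u|\,d|\mu|\le C\|u\|
\]
for $u\in C_c^\infty(\Omega_T)$, or for a dense class of smooth functions in $\WM(\Omega_T)$. Density, together with Theorem \ref{thm:DPP-representative} (convergence in $\WM$ implies quasi-uniform convergence of a subsequence of the quasi-continuous representatives), will then extend the bound to $\widetilde u$. This extension requires that $|\mu|$ not charge sets of zero capacity. That property is itself a consequence of the estimate: apply it to capacitary test functions and argue by lower semicontinuity, or use Fatou along quasi-uniformly convergent subsequences. Linearity and continuity of $L_\mu$ are then immediate.

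\textbf{Step 2: a Morrey estimate in time slices.} The key analytic step is to bound $\int|u|\,d|\mu|$ through a potential. I would use the representation
\[
|u(z)|\le \int_0^\infty \frac{1}{r}\,\mean_{Q_{r,r^q}(z)}|u-\text{avg}|
\]
or, more concretely, a parabolic Poincaré/Sobolev argument on $(r,r^q)$-cylinders. For functions in $\WM$, the oscillation on $Q_{r,r^q}$ is controlled by
\[
r\mean|\nabla u| \;+\; r^{q-1}\cdot\bigl(\text{time-derivative term in } W^{-1,p'}\bigr),
\]
via a weighted spatial mean $\langle u(t)\rangle_\eta$ whose time derivative is $\langle\partial_t u,\eta\rangle$, with $|\nabla\eta|\sim r^{-n-1}$. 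Summing dyadically over $r=2^{-k}$ and integrating against $d|\mu|$ with Fubini gives
\[
\int|u|\,d|\mu|\lesssim \sum_k \int \bigl(2^{-k}|\nabla u|+2^{-k(q-1)}G\bigr)\,\frac{|\mu|(Q_{2^{-k}}(w))}{2^{-k(n+q)}}\,dw ,
\]
where $G$ is a local maximal-type quantity built from $\partial_t u$. The Morrey bound turns the last factor into $M2^{k\vartheta}$. The gradient terms then sum geometrically: Hölder gives the factor $2^{-k(1-\vartheta/p)}$-type decay, which requires $\vartheta<p$ after interpolating with the total mass $|\mu|(\Omega_T)$ at large scales and the $L^p$ norm. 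The time terms need $\vartheta<q$ in the analogous way. The $L^\infty L^2$ norm handles the large scales $r\gtrsim 1$ together with the finiteness of $|\mu|$. More carefully, I would apply Hölder in $d|\mu|$ with the Adams-type dual argument: show that $\sup_z\int_{Q_{r}(z)}d|\mu|\cdot r^{-(n+q)}$ is bounded by $Mr^{-\vartheta}$, and that
\[
\int d|\mu| \,\mathcal{M}_r f\le C r^{-\vartheta}\|f\|_{L^1}
\]
where $\mathcal M_r$ is the averaging at scale $r$.

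\textbf{Main obstacle.} The hard part is the time-derivative term. Since $\partial_t u$ is only in $L^{p'}W^{-1,p'}$, the local averages $\langle u(t)\rangle_\eta$ differ in time by $\int\langle\partial_tu,\eta\rangle$, which must be bounded by
\[
\|\partial_t u\|_{W^{-1,p'}}\,\|\nabla\eta\|_{L^p}\sim r^{-n/p'-1}\,\|\partial_tu(t)\|
\]
so the bound is non-local in $x$. My first attempt would be to localize $\partial_t u$ by writing $\partial_t u=\Div F$ with $F\in L^{p'}(\Omega_T)$ and $\|F\|\approx\|\partial_tu\|$. This is possible on Lipschitz $\Omega$, measurably in $t$, via a Bogovskii/Riesz-type right inverse. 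Then
\[
\langle\partial_tu,\eta\rangle=-\int F\cdot\nabla\eta ,
\]
which is local, and the same dyadic summation works with $|F|$ in place of $|\nabla u|$, weighted by $r^{q-1}$. The exponent bookkeeping for mixed $p$ and $q$ is where $\min\{p,q\}$ enters; it is the step I expect to be the delicate one.
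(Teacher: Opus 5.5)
Your route differs from the paper's. You aim for a linear, potential-type bound: pointwise dyadic telescoping integrated against $|\mu|$. The paper instead proves the weak-type estimate $|\mu|(\{|\varphi|>\lambda\})\le C\lambda^{-\min\{p,p'\}}$ via bad radii and a Vitali covering, then integrates in $\lambda$. On parabolic cylinders your route can be made to work, but not as displayed. Your $L^1$-Fubini inequality, after the Morrey bound, only gives $\sum_k 2^{k(\vartheta-1)}\|\nabla u\|_{L^1}$, which forces $\vartheta<1$. The decay appears only if H\"older in $d|\mu|$ is applied \emph{before} Fubini, to $p$-th powers:
\[
\int_{\Omega_T} r\Bigl(\mean_{Q_{r,r^p}(z)}|\nabla u|^p\Bigr)^{1/p}d|\mu|(z)
\le r\,|\mu|(\Omega_T)^{1/p'}\Bigl(\int|\nabla u(w)|^p\,\frac{|\mu|(Q_{r,r^p}(w)\cap\Omega_T)}{|Q_{r,r^p}|}\,dw\Bigr)^{1/p}
\le C r^{1-\vartheta/p}\|\nabla u\|_{L^p}.
\]
The same is done with $p'$ for the flux $F$.

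The genuine gap is your bookkeeping on $(r,r^q)$-cylinders. Treated this way, the time term $r^{q-1}\mean_{Q_{r,r^q}(z)}|F|$ is bounded by $Cr^{\,q-1-\vartheta/p'}\|F\|_{L^{p'}}$. So it requires $\vartheta<p'(q-1)$, not $\vartheta<q$ as you assert. Since $p'(q-1)<q$ exactly when $q<p$, working natively on $(r,r^q)$-cylinders yields only $\vartheta<\min\{p,p'(q-1)\}$. This misses the stated range for $1<q<p$, and a level-set argument on that family loses in the same way. The missing idea is that $\min\{p,q\}$ enters through a change of cylinder family, not through exponent bookkeeping. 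By Remark~\ref{rem:centers} take $r\le1$:
\begin{itemize}
\item if $q\le p$, then $Q_{r,r^p}\subset Q_{r,r^q}$, so $|\mu|$ satisfies the parabolic Morrey bound with $\vartheta_p=\vartheta+p-q<p$;
\item if $q>p$, cover $Q_{r,r^p}$ by $\lesssim r^{p-q}$ cylinders $Q_{r,r^q}$.
\end{itemize}
Then run the argument only on $Q_{r,r^p}$, where the gradient and flux terms both need exactly $\vartheta_p<p$.

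You also omit the boundary issues the paper must handle.
\begin{itemize}
\item The identity $\partial_t\varphi=\Div F$ holds only against $W^{1,p}_0(\Omega)$ test functions. The zero extension of $F$ therefore carries a boundary flux, and the interior Poincar\'e inequality is unavailable once $B_r(x)\not\subset\Omega$. The chain must stop at $r\sim\dist(x,\partial\Omega)$ and be closed with the boundary Poincar\'e inequality of Lemma~\ref{lem:boundary-poincare}; this is where $p$-thickness of the complement enters.
\item Cylinders crossing $t=0$ or $t=T$ require the even reflection of $\varphi$ together with the sign-flipped reflection of $F$ (Lemma~\ref{lem:time-reflection}). Extension by zero in time creates Dirac masses.
\item $C^\infty_c(\Omega_T)$ is not dense in $\WM(\Omega_T)$, because the $L^\infty(0,T;L^2)$ norm sees the initial trace. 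You need the class $\mathscr D$ and its density with $\partial_t\varphi_j\to\partial_tu$ in $L^{p'}(0,T;W^{-1,p'}(\Omega))$, which is Lemma~\ref{lem:smooth-density-WM} and is a real lemma.
\end{itemize}

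Two of your points are fine. The field $F$ needs no Bogovskii-type operator: Hahn--Banach on $L^p(\Omega_T;\R^n)$ gives it directly (Lemma~\ref{lem:divergence-representation}). Your remark that diffuseness follows from the smooth estimate is also correct. Testing with $\varphi\in\mathscr D$, $\varphi\ge\chi_K$, $\EnergyM(\varphi)<\varepsilon$ bounds $|\mu|(K)$ for compact $K$. Inner regularity of $|\mu|$ and the inner/outer definition of $\capM$ then extend this to Borel polar sets. This bypasses the Hausdorff comparison of \cite[Theorem~1.2]{MS} used in Proposition~\ref{prop:diffuse}. Once the points above are fixed, your linear estimate gives a strong-type bound directly, with no level sets or Cavalieri step.
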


When $q\ge p$, the strict threshold $\vartheta<p$ cannot be relaxed in general; see Remark~\ref{rem:sharpness}.

\medskip

\begin{Remark}[On the regularity assumption on the domain]
\label{rem:weaker-boundary-assumption}
The Lipschitz regularity of $\partial\Omega$ is a convenient, but
not essential, assumption for the main duality theorem. In fact, its proof uses
the geometry of the boundary to apply results 
valid under the weaker assumption that $\R^n\setminus\Omega$ is uniformly $p$-thick; see \cite[Definition 2.9]{MS}.
More precisely, in the proof, we use this assumption to invoke the following results:
first, the boundary Poincar\'e inequality used in
Lemma \ref{lem:boundary-poincare}; see, in particular, \cite[Theorem~3.7]{KK}. Second, it is
the geometric hypothesis required in \cite[Theorem 1.2]{MS}.
Third, uniform $p$-thickness implies
the Hardy inequality needed in the proof of Density Lemma \ref{lem:smooth-density-WM}; see \cite{JL}. We retain the stronger and
more familiar Lipschitz assumption in the statements in order to
avoid introducing the definition of uniform
$p$-thickness and the associated notation.
\end{Remark}

\subsection{Morrey conditions in parabolic problems}
The duality criterion above also provides a useful point of comparison with the role of Morrey conditions in parabolic regularity theory. These are natural because they measure how strongly the datum concentrates at small scales, and in this regard their relation with regularity of parabolic equations of $p$-Laplacian type was first studied in \cite{B14a} for degenerate parabolic equations, while the singular cases are considered in \cite{B17} and \cite{Pa}; see also \cite{BD} for a study on non-smooth domains. All these papers draw inspiration from the seminal papers by Mingione \cite{M1,M2}, where it is shown how such conditions improve the gradient integrability for solutions of non-linear equations of $p$-Laplacian type; for the Poisson equation, this phenomenon follows from the Newtonian potential representation, which reduces gradient estimates to bounds for Riesz potentials of order one such as those established by Adams \cite{A}.

\medskip

 Consider a datum satisfying the Morrey condition
\[
|\mu|\bigl(B_r(x)\times(t-r^2,t+r^2)\bigr) \le C r^{n+2-\vartheta}.
\]
For degenerate equations, with $p\ge2$, the result of \cite{B14a} yields
\[
|\nabla u|\in\mathcal M^{m}_{\loc}(\Omega_T), \qquad m=p-1+\frac{1}{\vartheta-1},
\]
for $\vartheta_c<\vartheta\le n+2$, where $\vartheta_c\in(1,2)$ depends on the structural data. In particular, $m>p$ whenever $\vartheta_c<\vartheta<2$. A similar connection with the natural energy exponent holds in the singular case. For $n\ge2$ and $2-\frac{1}{n+1}<p<2$, the result of \cite{B17} gives
\[
|\nabla u|\in\mathcal M^{m}_{\loc}(\Omega_T),\qquad m=\frac{(p-1)\vartheta}{\vartheta-1}, \qquad p\le\vartheta\le n;
\]
as observed in \cite[Remark~1.4]{B17}, this estimate also extends to $\vartheta<p$ sufficiently close to $p$, in which case $m>p$. $\mathcal M^m(\Omega_T)$ denotes in both cases the Marcinkiewicz space (also known as weak-$L^m$), defined by
\[
h\in\mathcal M^m(\Omega_T)
\iff
\sup_{\lambda>0}\lambda^m\big|\{z\in\Omega_T:|h(z)|>\lambda\}\big|=:\|h\|_{\mathcal M^m(\Omega_T)}^m<\infty,
\]
for a measurable map $h:\Omega_T\to\R$; the local variant is defined in the usual way. Since weak-$L^m$ spaces embed into $L^p$ on sets of finite measure whenever $m>p$, these estimates yield $\nabla u\in L^p_{\loc}(\Omega_T)$ when $\vartheta<2$ in the degenerate case and $\vartheta<p$ in the singular range above. Thus, in the parameter ranges covered by these results, the gradient reaches the natural energy exponent under the condition $\vartheta<\min\{p,2\}$ and this agrees with the sufficient condition for membership in $\WM'(\Omega_T)$ established here for Morrey data on caloric cylinders. For potential estimates, implying the aforementioned results once the structure of the equation is sufficiently regular, see also \cite{KM13,KM14}.

Beyond gradient regularity, Morrey-type decay estimates have recently been used in \cite{BESS} to prove removability results for degenerate parabolic equations. The argument controls the Riesz measure associated with an obstacle-problem solution without first estimating the solution's oscillation. This illustrates the flexibility of measure decay estimates when the lack of a fixed scaling for the parabolic $p$-Laplacian makes classical oscillation arguments difficult to apply.

\medskip

\subsection{Examples and geometric consequences}
The failure at the endpoint is illustrated by the case
$p=n\ge2$ and $q\ge p$: indeed, the product measure $\delta_{x_0}\otimes\mathcal L^1\llcorner J$, where $J\Subset(0,T)$ is a compact non-degenerate interval, satisfies the endpoint Morrey estimate with $\vartheta=p$, but
charges a set of zero parabolic capacity. Consequently, it does not
belong to $\WM'(\Omega_T)$; see
Remark~\ref{rem:sharpness}. Nevertheless, the Morrey growth condition
is not necessary for duality and therefore does not provide a
characterization of $\WM'(\Omega_T)$. Indeed, 
the following finite and positive Radon measure
\begin{equation}\label{meas.slice}
\mu:=\mathcal{L}^n\otimes\delta_{t_0},\qquad t_0\in(0,T) 
\end{equation}
defines a bounded functional on $\WM(\Omega_T)$, because
$\WM(\Omega_T)\hookrightarrow C([0,T];L^2(\Omega))$, and then
\[
\left|
\int_\Omega u(x,t_0)\,dx
\right|
\le
\mathcal{L}^n(\Omega)^{1/2}\|u(t_0)\|_{L^2(\Omega)}
\le
C\homnorm{u}.
\]
On the other hand, $\mu$ satisfies \eqref{eq:intrinsic-morrey} at the endpoint $\vartheta=q$, since
\[
(\mathcal{L}^n\otimes\delta_{t_0})
\bigl(Q_{r,r^q}(z)\cap\Omega_T\bigr)
\le Cr^n.
\]
For $z=(x,t_0)$ and $B_r(x)\subset\Omega$, this mass is
exactly $c(n)r^n$: hence no Morrey bound with $\vartheta<q$ can hold.

\medskip

The duality criterion also yields the following geometric consequence; here, $\dim^{\mathcal P}_{\mathcal H,p}$ denotes the Hausdorff dimension with respect to the $p$-parabolic metric $d_p$ defined in \eqref{eq:dp}; see also Definition~\ref{HausdorffDim} for $q=p$.

\begin{Proposition}[Parabolic dimension and dual measures]\label{prop:dimension-dual-measures}
Let $E\Subset\Omega_T$ be compact. 
\begin{itemize}

\item If $\dim^{\mathcal P}_{\mathcal H,p}(E)<n$, then $E$ does not support any nonzero finite positive Radon measure belonging to $\WM'(\Omega_T)$.
\item If $\dim^{\mathcal P}_{\mathcal H,p}(E)>n$, then there exists a nonzero finite positive Radon measure $\lambda_E\in\WM'(\Omega_T)$ with $\operatorname{spt}\lambda_E\subset E$.
\end{itemize}
\end{Proposition}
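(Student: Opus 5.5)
Both items rest on two ingredients: the duality criterion of Theorem~\ref{thm:intrinsic-duality} with $q=p$, and the relation between parabolic capacity and the parabolic Hausdorff measure $\mathcal H^{n}_{d_p}$ from \cite[Theorem 1.2]{MS}.

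\emph{Second item.} If $\dim^{\mathcal P}_{\mathcal H,p}(E)>n$, fix $s$ with $n<s<\dim^{\mathcal P}_{\mathcal H,p}(E)$. Then $\mathcal H^s_{d_p}(E)=\infty>0$. Frostman's lemma on the metric space $(\R^{n+1},d_p)$ now applies. This space is doubling, since $|Q_{r,r^p}|=c\,r^{n+p}$, and $E$ is compact. It gives a nonzero finite positive Radon measure $\lambda_E$ with $\operatorname{spt}\lambda_E\subset E$ and
\[
\lambda_E\bigl(Q_{r,r^p}(z)\bigr)\le M r^{s}\quad\text{for all } z,\ r>0.
\]
Set $\vartheta:=n+p-s$. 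Then $\vartheta<p$ because $s>n$, and the bound above is exactly \eqref{eq:intrinsic-morrey} with $q=p$; if needed, replace $\vartheta$ by $\max\{\vartheta,0\}$, which is harmless for small $r$ since $\lambda_E$ is finite. Theorem~\ref{thm:intrinsic-duality} then gives $\lambda_E\in\WM'(\Omega_T)$.

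\emph{First item.} Suppose $\mu\ge0$ is finite, nonzero, supported in $E$, and $\mu\in\WM'(\Omega_T)$. The plan has four steps.
\begin{enumerate}
\item Show that $\mu$ charges no set of zero parabolic capacity. For a positive measure in the dual of the energy space this is standard and follows from the characterization in \cite{DPP}. Take a compact $K$ of zero capacity and test functions $u_j\in\WM$ with $u_j\ge\chi_K$ quasi-everywhere and $\|u_j\|_{\WM}\to0$. Positivity and Theorem~\ref{thm:DPP-representative} then give $\mu(K)\le\int\widetilde u_j\,d\mu\le\|\mu\|_{\WM'}\|u_j\|\to0$.
\item Since $\dim^{\mathcal P}_{\mathcal H,p}(E)<n$, we have $\mathcal H^{n}_{d_p}(E)=0$.
\item Deduce that $E$ has zero parabolic capacity by the comparison with parabolic Hausdorff measure in \cite{MS}: sets with $\mathcal H^{n}_{d_p}$-measure zero, that is, $\mathcal H^{n+p-p}$ in the parabolic gauge, are polar.
\item Conclude $\mu(E)=0$, contradicting $\mu\neq0$.
\end{enumerate}

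\emph{Main obstacle.} I expect step 3 to be the hardest: I must make sure the capacity appearing in \cite{MS} is the same one, or at least has the same null sets, as the capacity governing the quasi-continuous representatives in \cite{DPP}. If the exact gauge in \cite{MS} is $\mathcal H^{n}_{d_p}$, the implication is direct. Otherwise I would argue by hand. Cover $E$ by cylinders $Q_{r_i,r_i^p}(z_i)$ with $\sum r_i^{n}<\varepsilon$. Use cutoffs $\varphi_i$ equal to $1$ on $Q_{r_i,r_i^p}$ and supported in the doubled cylinder, with $|\nabla\varphi_i|\lesssim r_i^{-1}$ and $|\partial_t\varphi_i|\lesssim r_i^{-p}$. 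Then estimate
\[
\|\nabla\varphi_i\|_p^p\lesssim r_i^{n},
\qquad
\|\partial_t\varphi_i\|_{L^{p'}(W^{-1,p'})}^{p'}\lesssim r_i^{n},
\qquad
\|\varphi_i\|_{L^\infty L^2}^2\lesssim r_i^{n},
\]
and take $\max_i\varphi_i$ to bound the capacity by $C\varepsilon$. The delicate point in this fallback is the $L^\infty L^2$ term: its scaling $r^{n}$ matches, but the sup over $i$ requires care, for instance using a convex-type capacity subadditivity.
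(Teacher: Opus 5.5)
Your proposal is correct and follows the paper's proof. The second item is Frostman's lemma on $(E,d_p)$ combined with Theorem~\ref{thm:intrinsic-duality} for $\vartheta=n+p-s$. The first item is $\mathcal H^n_p(E)=0$, hence $\capM(E)=0$ by \cite[Theorem~1.2]{MS}, followed by Lemma~\ref{lem:dual-measure-polar}. That theorem is stated for the same capacity $\capM$, so you need neither a comparison with $\capD$ nor your fallback covering. The fallback would in fact fail for $p>n$: test against a fixed $\phi\in W^{1,p}_0(\Omega)$ equal to $1$ near $x_i$, and Hölder's inequality on the rise time shows that any nonnegative cutoff $\ge1$ on $Q_{r_i,r_i^p}$ and supported in the doubled cylinder has $\|\partial_t\varphi_i\|^{p'}_{L^{p'}(0,T;W^{-1,p'}(\Omega))}\gtrsim r_i^{(n-1)p'}\gg r_i^n$.

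In your step 1, take $\varphi_j\in\mathscr D$ with $\varphi_j\ge\chi_K$ everywhere and $\EnergyM(\varphi_j)\to0$ directly from \eqref{eq:capM-compact}, as the paper does. With $W_M$-functions that are $\ge\chi_K$ only quasi-everywhere, the inequality $\mu(K)\le\int\widetilde u_j\,d\mu$ already presupposes that $\mu$ does not charge polar sets.
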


At the critical dimension $n$, no conclusion can be drawn from the dimension alone. Indeed, restricting the time-slice measure considered in \eqref{meas.slice} to $K\times\{t_0\}$, where $K\Subset\Omega$ is compact with positive Lebesgue measure, gives a nonzero positive dual measure supported on a set of parabolic dimension $n$.

On the other hand, when $p=n\ge2$, the vertical segment
$\{x_0\}\times J$ considered above satisfies
$\dim^{\mathcal P}_{\mathcal H,p}(\{x_0\}\times J)=n$.
Nevertheless, such a segment has zero $\capM$-capacity, as shown in Remark~\ref{rem:sharpness}, and therefore it
cannot support any nonzero positive measure in
$\WM'(\Omega_T)$.

\medskip

A further class of examples is obtained from products of
finite positive Radon measures $\nu$ on $\Omega$ and
$\sigma$ on $(0,T)$. Suppose that, for some
$d\in[0,n]$ and $\alpha\in[0,1]$,
\[
\nu\big(B_r(x)\big)\leq C_\nu r^d,
\qquad
\sigma\big((t-\rho,t+\rho)\big)\leq C_\sigma\rho^\alpha;
\]
then $(\nu\otimes\sigma)(Q_{r,r^p}(z))\leq Cr^{d+p\alpha}$
and consequently
\[
d+p\alpha>n
\quad\Longrightarrow\quad
\nu\otimes\sigma\in\WM'(\Omega_T).
\]
In particular, if $E\Subset\Omega$ is compact and nonempty, and $\dim_{\mathcal H}(E)>n-p$,
then there exists a nonzero measure $\nu_E$, supported on
$E$, such that
\[
\nu_E\otimes\mathcal L^1\llcorner J
\in\WM'(\Omega_T)
\]
for every non-degenerate interval $J\Subset(0,T)$. For $p\le n$, this follows from Frostman's lemma by choosing
a spatial growth exponent $d>n-p$; for $p>n$, one may take a Dirac mass supported on $E$. At the critical
spatial dimension $n-p$, this conclusion may fail. Here, $\dim_{\mathcal H}$ denotes the Euclidean Hausdorff
dimension; see \cite[Definition~2.2]{EG}.

\medskip
For absolutely continuous measures, the Morrey condition is not
expected to yield optimal integrability assumptions. Indeed, the parabolic Sobolev inequality \cite[Proposition 3.1]{DiB} gives
\begin{equation}\label{parabsovemb}
W_M(\Omega_T)\hookrightarrow L^{\gamma}(\Omega_T),\qquad \gamma:=
\max\left\{
2,\frac{p(n+2)}{n}
\right\}.  
\end{equation}
Consequently,
\[
h\in L^{\gamma'}(\Omega_T)
\quad\Longrightarrow\quad
\mu:=h\,dx\,dt\in W_M'(\Omega_T).
\]
By contrast, estimating the mass of every parabolic cylinder directly
by H\"older's inequality leads to the more restrictive condition
\[
h\in L^m(\Omega_T),
\quad
m>\frac{n+p}{p} \quad \Longrightarrow\quad  \mu:=h\,dx\,dt\in W_M'(\Omega_T).
\]

Thus the Morrey criterion is not designed to provide optimal conditions for measures having a Lebesgue density. Its main relevance lies in the treatment of singular measures, for which \eqref{parabsovemb} cannot be paired with a density. 

\subsection{Organization of the paper.}
Section~\ref{sec:spaces-capacities} compares the functional spaces and the relevant notions of
capacity. Section~\ref{sec:preliminary} contains the preliminary estimates, including the
interior and boundary Poincar\'e inequalities and the diffuseness
result. Section~\ref{sec:duality} proves the main duality theorem and constructs
the endpoint counterexample. A subsection then records the
simplification available when $p\geq2n/(n+2)$, while Section~\ref{sec:pde-consequences}
discusses variational, energy, and renormalized solutions. Finally, the
Appendix contains the technical proof of Density Lemma \ref{lem:smooth-density-WM}.

\section{Function spaces and parabolic capacity}\label{sec:spaces-capacities}

In this section we review the spaces and capacities introduced in
\cite{DPP,MS}. Throughout this article, we use the notation presented here. Set
\[
\VM(\Omega_T):=L^p(0,T;W^{1,p}_0(\Omega)),
\qquad
\VM'(\Omega_T):=L^{p'}(0,T;W^{-1,p'}(\Omega)).
\]
We endow these Bochner spaces with the norms
\[
\|\varphi \|_{\VM}:=\biggl(\int_{\Omega_T} |\varphi(x,t)|^p+|\nabla\varphi(x,t)|^p\,dx\,dt\biggr)^{1/p}\qquad \|\varphi\|_{V_M'}:=\biggl(\int_0^T \|\varphi(t)\|^{p'}_{W^{-1,p'}(\Omega)}\,dt \biggr)^{1/{p'}};
\]
For the basic theory of Bochner--Lebesgue spaces, we refer to \cite[Chapter~1]{HNVW}. Moreover, in view of the slice-wise Poincaré inequality, we can work with the equivalent norm $\|\varphi\|_{V_M}=\|\nabla \varphi\|_{L^p(\Omega_T)}$.
\begin{Lemma}[Space-time divergence representation]
\label{lem:divergence-representation}
For every $\mathcal G\in\VM'(\Omega_T)$ there exists
$F\in L^{p'}(\Omega_T;\R^n)$ such that $\mathcal G=\Div F$ in the sense of distributions, i.e., 
\begin{equation}\label{eq:divergence-representation}
\begin{aligned}
\langle \mathcal G,\varphi\rangle_{\VM',\VM}
:=
\int_0^T
\langle \mathcal G(t),\varphi(t)\rangle_
{W^{-1,p'}(\Omega),W^{1,p}_0(\Omega)}
\,dt=
-\int_{\Omega_T}F\cdot\nabla\varphi\,dx\,dt,
\end{aligned}
\end{equation}
for every $\varphi\in\VM(\Omega_T)$. 
Moreover,
\begin{equation}\label{eq:negative-norm-representation}
\|\mathcal G\|_{\VM'}
=
\inf\left\{
\|F\|_{L^{p'}(\Omega_T)}:
F\in L^{p'}(\Omega_T;\R^n),\
\mathcal G=\Div F
\right\}.
\end{equation}
\end{Lemma}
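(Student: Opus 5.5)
The plan is to build the isometry-plus-Hahn--Banach argument that characterizes duals of Bochner spaces with values in $W^{1,p}_0$. Consider the linear map
\[
\Phi:\VM(\Omega_T)\to L^p(\Omega_T;\R^n),\qquad \Phi(\varphi):=\nabla\varphi .
\]
With the equivalent norm $\|\varphi\|_{\VM}=\|\nabla\varphi\|_{L^p(\Omega_T)}$ (justified by the slice-wise Poincar\'e inequality), $\Phi$ is a linear isometry onto a closed subspace $X:=\Phi(\VM(\Omega_T))\subset L^p(\Omega_T;\R^n)$. Given $\mathcal G\in\VM'(\Omega_T)$, the pairing in \eqref{eq:divergence-representation} defines a bounded linear functional on $\VM(\Omega_T)$. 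Indeed, measurability of $t\mapsto\langle\mathcal G(t),\varphi(t)\rangle$ follows from strong measurability of both Bochner functions, and H\"older in time gives $|\langle\mathcal G,\varphi\rangle|\le\|\mathcal G\|_{\VM'}\|\varphi\|_{\VM}$.

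The functional $\ell:=\langle\mathcal G,\Phi^{-1}(\cdot)\rangle$ is then bounded on $X$ with norm at most $\|\mathcal G\|_{\VM'}$. By Hahn--Banach, extend $\ell$ to all of $L^p(\Omega_T;\R^n)$ with the same norm. By the Riesz representation $(L^p(\Omega_T;\R^n))'=L^{p'}(\Omega_T;\R^n)$, valid since $1<p<\infty$, there is $G\in L^{p'}$ with $\|G\|_{L^{p'}}=\|\ell\|_{X'}$ and $\ell(\Psi)=\int G\cdot\Psi$. Setting $F:=-G$ yields \eqref{eq:divergence-representation} for every $\varphi\in\VM$. Testing with $\varphi\in C_c^\infty(\Omega_T)$ gives $\mathcal G=\Div F$ in distributions. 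This also gives $\inf\|F\|_{L^{p'}}\le\|\ell\|_{X'}$.

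For the norm identity \eqref{eq:negative-norm-representation}, the remaining step is $\|\ell\|_{X'}=\|\mathcal G\|_{\VM'}$, together with the reverse inequality for arbitrary representations $F$. The latter is immediate: if $\mathcal G=\Div F$ in the sense of \eqref{eq:divergence-representation}, then
\[
|\langle\mathcal G,\varphi\rangle|\le\|F\|_{L^{p'}}\|\nabla\varphi\|_{L^p},
\]
so the dual norm of $\mathcal G$ on $\VM$ is at most $\|F\|_{L^{p'}}$.

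The main obstacle is identifying the dual norm of $\mathcal G$ as a functional on $\VM=L^p(0,T;W^{1,p}_0(\Omega))$ with the Bochner norm $\|\mathcal G\|_{L^{p'}(0,T;W^{-1,p'}(\Omega))}$, where $W^{1,p}_0(\Omega)$ carries the norm $\|\nabla\cdot\|_{L^p(\Omega)}$ and $W^{-1,p'}$ the corresponding dual norm. I would invoke the standard duality $L^p(0,T;E)'=L^{p'}(0,T;E')$ isometrically for reflexive $E$; see \cite[Chapter~1]{HNVW}. Here $E=W^{1,p}_0(\Omega)$ is reflexive, being isometric to a closed subspace of $L^p$. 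Alternatively, one can prove the needed inequality $\|\mathcal G\|_{\VM'}\le\sup_{\|\varphi\|_{\VM}\le1}\langle\mathcal G,\varphi\rangle$ directly, as follows:
\begin{itemize}
\item approximate $\mathcal G$ by simple functions;
\item choose near-norming $\varphi_i\in W^{1,p}_0$ on each piece;
\item weight the pieces by $\|\mathcal G(t)\|^{p'-1}$.
\end{itemize}
With this identification, the Hahn--Banach extension has norm exactly $\|\mathcal G\|_{\VM'}$, so the infimum in \eqref{eq:negative-norm-representation} is attained and equals $\|\mathcal G\|_{\VM'}$.
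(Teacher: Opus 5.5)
Your proposal is correct and follows the paper's proof essentially step by step: the gradient isometry onto a closed subspace of $L^p(\Omega_T;\R^n)$, then Hahn--Banach, then Riesz representation, then H\"older for the reverse inequality. The one point you handle more carefully is the identification $\|\mathcal G\|_{\VM'}=\sup_{\|\varphi\|_{\VM}\le1}|\langle\mathcal G,\varphi\rangle|$, with $W^{1,p}_0(\Omega)$ normed by $\|\nabla\cdot\|_{L^p}$, which the paper uses without comment; only the norming inequality is needed there, and it holds for every Banach space, so reflexivity of $W^{1,p}_0(\Omega)$ is not actually required.
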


\begin{proof}
Consider the linear map
\[
J:\VM(\Omega_T)\to J(\VM(\Omega_T))\subset L^p(\Omega_T;\R^n),
\qquad
J\varphi:=\nabla\varphi.
\]
Since $J$ is a linear isometry, $J(\VM(\Omega_T))$ is a closed linear subspace of $L^p(\Omega_T;\R^n)$. It is also injective: if $J(\varphi_1)=J(\varphi_2)$, then $\varphi_1=\varphi_2$ almost everywhere, due to slice-wise Poincaré.

Define the linear functional
\begin{equation}\label{eq:Lambda-on-gradient-range}
\Lambda:J(\VM(\Omega_T))\to\R,\qquad \Lambda(J\varphi)
:=
-\langle\mathcal G,\varphi\rangle_{\VM',\VM}.
\end{equation}
This functional is well defined, because if $J\varphi_1=J\varphi_2$, then $J(\varphi_1-\varphi_2)=0$. By the injectivity proved above,
$\varphi_1=\varphi_2$ almost everywhere, and therefore the right-hand
side of \eqref{eq:Lambda-on-gradient-range} is independent of the choice of the preimage. Moreover
\[
\begin{aligned}
|\Lambda(J\varphi)|
=
|\langle\mathcal G,\varphi\rangle_{\VM',\VM}|
\le
\|\mathcal G\|_{\VM'}
\|\varphi\|_{\VM}
=
\|\mathcal G\|_{\VM'}
\|J\varphi\|_{L^p(\Omega_T)}.
\end{aligned}
\]
Thus $\Lambda$ is continuous on $J(\VM(\Omega_T))$, with $\|\Lambda\|_{(J(\VM(\Omega_T)))'}\le\|\mathcal G\|_{\VM'}$.
On the other hand
\[
\begin{aligned}
\|\mathcal G\|_{\VM'}
=
\sup_{\|\varphi\|_{\VM}\le1}
|\langle\mathcal G,\varphi\rangle_{\VM',\VM}|
=\sup_{\substack{H\in J(\VM(\Omega_T))\\\|H\|_{L^p(\Omega_T)}\le1}}
|\Lambda(H)|
=
\|\Lambda\|_{(J(\VM(\Omega_T)))'}.
\end{aligned}
\]
Hence, $\|\Lambda\|_{(J(\VM(\Omega_T)))'}=\|\mathcal G\|_{\VM'}$. By applying the Hahn--Banach theorem we can find $\widetilde\Lambda\in (L^{p}(\Omega_T;\R^n))'$,
such that $\widetilde\Lambda|_{J(\VM(\Omega_T))}=\Lambda$ and $\|\widetilde\Lambda\|_{(L^{p}(\Omega_T))'}=\|\Lambda\|_{(J(\VM(\Omega_T)))'}$.
Therefore, by the Riesz representation Theorem in Lebesgue spaces, there exists $F\in L^{p'}(\Omega_T;\R^n)$
such that
\[
\widetilde\Lambda(H)
=
\int_{\Omega_T}F\cdot H\,dx\,dt
\qquad
\text{for every }H\in L^{p}(\Omega_T;\R^n) ,
\]
and
\[
\begin{aligned}
\|F\|_{L^{p'}(\Omega_T)}=
\|\widetilde\Lambda\|_{L^{p'}(\Omega_T)}
=
\|\Lambda\|_{(J(\VM(\Omega_T)))'}
=
\|\mathcal G\|_{\VM'}.
\end{aligned}
\]

Taking $H=\nabla\varphi=J\varphi$ with $\varphi\in V_M(\Omega_T)$, and using
\eqref{eq:Lambda-on-gradient-range}, we obtain
\[
-\langle\mathcal G,\varphi\rangle_{\VM',\VM}
=\Tilde{\Lambda}(\nabla\varphi)=
\int_{\Omega_T}F\cdot\nabla\varphi\,dx\,dt,
\]
which proves \eqref{eq:divergence-representation} and shows that $\mathcal G=\Div F$ in the sense of distributions. It remains to prove the minimizing property: the field constructed above
satisfies $\|F\|_{L^{p'}(\Omega_T)}=
\|\mathcal G\|_{\VM'}$, so the infimum in \eqref{eq:negative-norm-representation} is at most
$\|\mathcal G\|_{\VM'}$. Conversely, let
$H\in L^{p'}(\Omega_T;\R^n)$ be any vector field such that $\mathcal G=\Div H$. Then, for every $\varphi\in\VM(\Omega_T)$, H\"older's inequality gives
\[
\begin{aligned}
|\langle\mathcal G,\varphi\rangle|
=
\left|
\int_{\Omega_T}H\cdot\nabla\varphi\,dx\,dt
\right|
\le
\|H\|_{L^{p'}(\Omega_T)}
\|\nabla\varphi\|_{L^p(\Omega_T)}
=
\|H\|_{L^{p'}(\Omega_T)}
\|\varphi\|_{\VM}.
\end{aligned}
\]
Then, we obtain $\|\mathcal G\|_{\VM'}
\le
\|H\|_{L^{p'}(\Omega_T)}$, and
taking the infimum over all admissible fields $H$ concludes the proof of
\eqref{eq:negative-norm-representation}.
\end{proof}

\subsection{The Moring--Scheven space and capacity}

The authors of \cite{MS} consider the already introduced
parabolic energy space 
\begin{equation*}
\WM(\Omega_T)
:=
\left\{
 u\in \VM(\Omega_T)\cap L^\infty(0,T;L^2(\Omega)):
 \partial_tu\in\VM'(\Omega_T)
\right\},
\end{equation*}
endowed with the homogeneous norm \eqref{eq:intro-WM-norm}. The quantity we are going to minimize in \eqref{eq:capM-compact} and \eqref{eq:capM-extension}, see \cite[Definition~3.5 and Remark~3.6]{MS}, is the nonhomogeneous energy
\begin{equation*}
\EnergyM(u)
:=
\|\nabla u\|_{L^p(\Omega_T)}^p
+
\|u\|_{L^\infty(0,T;L^2(\Omega))}^2+
\|\partial_tu\|_{\VM'(\Omega_T)}^{p'}.
\end{equation*}
\begin{Corollary}
\label{cor:normalized-energy-bound}
Let $u\in W_M(\Omega_T)$, and let $F\in L^{p'}(\Omega_T;\mathbb R^n)$ be the norm-minimizing vector field provided by
Lemma~\ref{lem:divergence-representation}. If either $\mathcal E_M(u)\le1$ or $\|u\|_{W_M}\le1,$
then
\begin{equation*}
\int_{\Omega_T}
\left(
|\nabla u|^p+|F|^{p'}
\right)\,dx\,dt
\le1.
\end{equation*}
\end{Corollary}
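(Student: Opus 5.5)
The plan is to reduce everything to the identity $\|F\|_{L^{p'}(\Omega_T)}=\|\partial_t u\|_{\VM'(\Omega_T)}$ furnished by Lemma~\ref{lem:divergence-representation}, applied with $\mathcal G=\partial_t u\in\VM'(\Omega_T)$. With $F$ the norm-minimizing field, this gives
\[
\int_{\Omega_T}\bigl(|\nabla u|^p+|F|^{p'}\bigr)\,dx\,dt
=\|\nabla u\|_{L^p(\Omega_T)}^p+\|\partial_tu\|_{\VM'(\Omega_T)}^{p'}.
\]
The two hypotheses are then handled as separate cases.

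\emph{Case $\mathcal E_M(u)\le1$.} The right-hand side above is obtained from $\mathcal E_M(u)$ by dropping the nonnegative term $\|u\|_{L^\infty(0,T;L^2(\Omega))}^2$. Hence it is at most $\mathcal E_M(u)\le1$, and this case is done.

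\emph{Case $\|u\|_{W_M}\le1$.} Set $a:=\|\nabla u\|_{L^p}$, $b:=\|\partial_tu\|_{\VM'}$ and $c:=\|u\|_{L^\infty(0,T;L^2)}$. The hypothesis reads $a+b+c\le1$, so in particular $a,b\in[0,1]$. Since $p>1$ and $p'>1$, we have $a^p\le a$ and $b^{p'}\le b$. It follows that
\[
a^p+b^{p'}\le a+b\le a+b+c\le 1,
\]
which is the claim.

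No step here is a real obstacle. The only point that needs checking is that we use the norm-minimizing $F$ from Lemma~\ref{lem:divergence-representation}, so that the equality $\|F\|_{L^{p'}}=\|\partial_tu\|_{\VM'}$ holds exactly rather than merely as an inequality. The hypothesis of that lemma is satisfied because $\partial_tu\in\VM'(\Omega_T)$ by the definition of $\WM(\Omega_T)$.
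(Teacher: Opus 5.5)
Your proposal is correct and is the paper's argument: the paper's proof also rests only on the identity $\|F\|_{L^{p'}(\Omega_T)}=\|\partial_t u\|_{V_M'}$ and says the claim then "follows by elementary inequalities". You spell those inequalities out. In the energy case you drop the $L^\infty(0,T;L^2(\Omega))$ term, and in the norm case you use $a^p\le a$ and $b^{p'}\le b$ for $a,b\in[0,1]$.
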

\begin{proof}
Using that $\partial_tu=\Div F$ in the sense of distributions, and that $\|F\|_{L^{p'}(\Omega_T)}=\|\partial_tu\|_{V_M'}$, the statement follows by elementary inequalities.
\end{proof}
Let 
\begin{equation}\label{smoothspace}
\mathscr D
:=
\left.C_c^\infty(\Omega\times\R)\right|_{\Omega_T}.
\end{equation}
This space was first considered in \cite[Definition 2.10]{DPP}. Note that compactness is imposed in the spatial variable inside $\Omega$, while a
function in $\mathscr D$ is not required to vanish near $t=0$ or $t=T$.  For a compact set $K\Subset\Omega_T$, define
\begin{equation}\label{eq:capM-compact}
\capM(K;\Omega_T)
:=
\inf\left\{
\EnergyM(\varphi):
\varphi\in\mathscr D,\ 
\varphi\ge\chi_K
\text{ in }\Omega_T
\right\},
\end{equation}
where $\chi_K$ denotes the characteristic function of $K$. For an open set $U\subset\Omega_T$ and an arbitrary 
$E\subset\Omega_T$, set
\begin{equation}\label{eq:capM-extension}
\capM(U;\Omega_T)
:=
\sup_{K\Subset U}\capM(K;\Omega_T),
\qquad
\capM(E;\Omega_T)
:=
\inf_{\substack{U\supset E\\U\,\text{open}}}\capM(U;\Omega_T);
\end{equation}
we write simply $\capM(E)$ when the reference cylinder is clear. This is the
variational capacity of \cite[Definition~3.5]{MS}. Starting from an open set $U\subset \Omega_T$, it will be useful to define also
\begin{equation*}
\begin{aligned}
&\widehat{\operatorname{cap}}_M(U)
:=
\inf
\left\{
\|\varphi\|_{\WM(\Omega_T)}:
\varphi\in\WM(\Omega_T),\
\varphi\geq\chi_U\,\,
\text{a.e. in }\Omega_T
\right\}, \\
&\widetilde{\operatorname{cap}}_M(U)
:=
\inf
\left\{
\EnergyM(\varphi):
\varphi\in\WM(\Omega_T),\
\varphi\geq\chi_U
\,\,\text{a.e. in }\Omega_T
\right\},
\end{aligned}
\end{equation*}
with the same extension properties for Borel sets as in \eqref{eq:capM-extension}. 
\subsection{The Droniou--Porretta--Prignet space and capacity}
Let
\[
\VD:=W^{1,p}_0(\Omega)\cap L^2(\Omega),
\qquad
\VD':=(W^{1,p}_0(\Omega)\cap L^2(\Omega))'.
\]
The space considered by the authors of \cite{DPP} is
\begin{equation*}
\WD(\Omega_T)
:=
\left\{
 u\in L^p(0,T;\VD):
 \partial_tu\in L^{p'}(0,T;\VD')
\right\},
\end{equation*}
with norm
\[
\|u\|_{\WD}
:=
\|u\|_{L^p(0,T;\VD)}
+
\|\partial_tu\|_{L^{p'}(0,T;\VD')}.
\]
They also consider the space
\begin{equation*}
\WtD(\Omega_T)
:=
\left\{
 u\in L^p(0,T;W^{1,p}_0(\Omega))\cap L^\infty(0,T;L^2(\Omega)):
 \partial_tu\in L^{p'}(0,T;W^{-1,p'}(\Omega))
\right\}.
\end{equation*}
Thus, with the present notation
\begin{equation}\label{eq:WM-WtildeD}
\WtD(\Omega_T)=\WM(\Omega_T).
\end{equation}

\begin{Lemma}[Continuous embedding]
\label{lem:WM-into-WD}
The embedding $\WM(\Omega_T)\hookrightarrow\WD(\Omega_T)$ is continuous. More precisely,
\begin{equation}\label{eq:WM-WD-embedding}
\|u\|_{\WD}
\le
C(p,\Omega,T)\homnorm{u},
\qquad
\text{for every }u\in\WM(\Omega_T).
\end{equation}
\end{Lemma}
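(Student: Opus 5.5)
The plan is to bound the two pieces of $\|u\|_{\WD}=\|u\|_{L^p(0,T;\VD)}+\|\partial_tu\|_{L^{p'}(0,T;\VD')}$ separately by $\homnorm{u}$. Two facts are used throughout: $u\in\WM(\Omega_T)$ already lies in $L^p(0,T;W^{1,p}_0(\Omega))\cap L^\infty(0,T;L^2(\Omega))$, and $\Omega$ is bounded.

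\textbf{Spatial part.} We equip $\VD=W^{1,p}_0(\Omega)\cap L^2(\Omega)$ with the norm $\|v\|_{W^{1,p}_0}+\|v\|_{L^2}$, taking $\|v\|_{W^{1,p}_0}=\|\nabla v\|_{L^p(\Omega)}$ by the Poincar\'e inequality. Then
\[
\|u\|_{L^p(0,T;\VD)}\le \|\nabla u\|_{L^p(\Omega_T)}+\Bigl(\int_0^T\|u(t)\|_{L^2(\Omega)}^p\,dt\Bigr)^{1/p}\le \|\nabla u\|_{L^p(\Omega_T)}+T^{1/p}\|u\|_{L^\infty(0,T;L^2(\Omega))}.
\]
Here Minkowski's inequality in $L^p(0,T)$ gives the first step. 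Strong measurability of $u$ as a $\VD$-valued map follows from its measurability into $W^{1,p}_0(\Omega)$ and $L^2(\Omega)$, viewing $\VD$ as a closed subspace of the product.

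\textbf{Time derivative.} Restriction $\iota'\colon W^{-1,p'}(\Omega)\to\VD'$ is dual to the continuous inclusion $\iota\colon\VD\hookrightarrow W^{1,p}_0(\Omega)$, whose norm is at most $1$. Hence
\[
\|\iota' g\|_{\VD'}=\sup_{\|v\|_{\VD}\le1}|\langle g,v\rangle|\le\|g\|_{W^{-1,p'}(\Omega)}.
\]
Applying this pointwise in $t$ and integrating gives $\|\partial_tu\|_{L^{p'}(0,T;\VD')}\le\|\partial_tu\|_{\VM'(\Omega_T)}$.

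The one point that needs care is that the distributional time derivative in the $\WD$ sense coincides with $\iota'\partial_tu$. By definition of $\partial_tu\in\VM'$, for all $\varphi\in C_c^\infty(0,T)$ we have $\int_0^T u\varphi'\,dt=-\int_0^T\partial_tu\,\varphi\,dt$ in $W^{-1,p'}(\Omega)$. This holds even though $u$ takes values in $W^{1,p}_0(\Omega)$, via the embedding $W^{1,p}_0\cap L^2\to\VD'$ given by the $L^2$ pairing. We test both sides against $v\in\VD$. On the left, $\langle u(t),v\rangle$ is the $L^2$ pairing. On the right, the identity is the $W^{-1,p'}$ pairing, which is compatible with the $L^2$ pairing because $C_c^\infty(\Omega)$ is dense in $\VD$ and the Gelfand-triple identifications agree on smooth functions. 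This gives the required identity in $\VD'$. It is the main (though mild) obstacle, and it amounts to checking consistency of the duality pairings on the dense subspace $C_c^\infty(\Omega)$.

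Summing the two estimates yields \eqref{eq:WM-WD-embedding} with $C=\max\{1,T^{1/p}\}$, which depends only on $p$ and $T$, and on $\Omega$ through the choice of equivalent norm on $W^{1,p}_0(\Omega)$.
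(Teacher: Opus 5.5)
Your proposal is correct and follows the same route as the paper. For the spatial part you use Minkowski's inequality together with $\|u\|_{L^p(0,T;L^2(\Omega))}\le T^{1/p}\|u\|_{L^\infty(0,T;L^2(\Omega))}$, and for the time derivative you dualize the inclusion $\VD\hookrightarrow W^{1,p}_0(\Omega)$ to obtain $W^{-1,p'}(\Omega)\hookrightarrow \VD'$.

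Your extra checks are left implicit in the paper and are sound: strong measurability into $\VD$, and that $\iota'\partial_tu$ is the $\VD'$-valued distributional derivative, obtained by testing with $C_c^\infty(\Omega)$ and extending by density. The one caveat is that the identity $\int_0^T u\varphi'\,dt=-\int_0^T\partial_tu\,\varphi\,dt$ should be read from the outset as tested against $v\in\VD$, since for $p<2n/(n+2)$ the values $u(t)$ need not lie in $W^{-1,p'}(\Omega)$; this is in fact how you use it.
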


\begin{proof}
By Minkowski's inequality and the finiteness of the time interval,
\[
\begin{aligned}
\|u\|_{L^p(0,T;\VD)}
\le
\|u\|_{L^p(0,T;W^{1,p}(\Omega))}
+
\|u\|_{L^p(0,T;L^2(\Omega))}
\le
C\|\nabla u\|_{L^p(\Omega_T)}
+
T^{1/p}\|u\|_{L^\infty(0,T;L^2(\Omega))}.
\end{aligned}
\]
Moreover, the continuous embedding
$\VD\hookrightarrow W^{1,p}_0(\Omega)$ gives, by duality, $W^{-1,p'}(\Omega)\hookrightarrow\VD'$.
Hence
\[
\|\partial_tu\|_{L^{p'}(0,T;\VD')}
\le
C\|\partial_tu\|_{L^{p'}(0,T;W^{-1,p'}(\Omega))}.
\]
Combining the estimates proves \eqref{eq:WM-WD-embedding}.
\end{proof}

There, the authors introduce two different capacities. For an open set
$U\subset\Omega_T$,
\begin{equation}\label{eq:capD-open}
\capD(U)
:=
\inf\big\{
\|\varphi\|_{\WD}:
 \varphi\in\WD(\Omega_T),\ \varphi\ge\chi_U\text{ a.e. in }\Omega_T
\big\},
\end{equation}
and for a Borel set $B$,
\begin{equation}\label{eq:capD-Bor}
 \capD(B):=\inf\big\{\capD(U):B\subset U,\ U\text{ open}\big\}.   
\end{equation}
They also define, for compact $K\Subset\Omega_T$,
\begin{equation*}
\CAPD(K)
:=
\inf\big\{
\|\varphi\|_{\WD}:
\varphi\in\mathscr D,\ 
\varphi\ge\chi_K\text{ in }\Omega_T
\big\},
\end{equation*}
with the corresponding inner--outer extension to open and Borel sets, as in \eqref{eq:capM-extension}. For our purposes, we will use only the first notion of parabolic capacity; however, in the next result we shall present both for the sake of completeness. 

\subsection{Equivalence of polar sets}

The capacities just introduced should not be identified pointwise. We only
need the equivalence of their negligible sets. 

\begin{Proposition}[Equivalence at the level of polar sets]
\label{prop:capacity-null-equivalence}
For every Borel set $E\subset\Omega_T$,
\begin{equation}\label{eq:capacity-null-equivalence}
\widehat{\operatorname{cap}}_M(E)=0 \quad \Longleftrightarrow\quad
\capM(E)=0
\quad\Longleftrightarrow\quad
\capD(E)=0
\quad\Longleftrightarrow\quad
\CAPD(E)=0.
\end{equation}
\end{Proposition}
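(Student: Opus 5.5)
We prove the chain of equivalences by comparing the capacities on open sets. The equivalences then pass to arbitrary Borel sets through the common outer definition \eqref{eq:capD-Bor}, resp.\ \eqref{eq:capM-extension}. The idea is to pair each capacity with one defined by the same class of test functions, and then use two tools: the continuous embedding of Lemma~\ref{lem:WM-into-WD}, and the elementary comparison between a norm and an energy that is a sum of powers of the same three terms.

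\emph{Step 1: $\widehat{\operatorname{cap}}_M$ versus $\widetilde{\operatorname{cap}}_M$.} For $\varphi\in\WM(\Omega_T)$ the quantities $\homnorm{\varphi}$ and $\EnergyM(\varphi)$ are built from the same three terms. Hence $\homnorm{\varphi}\to0$ if and only if $\EnergyM(\varphi)\to0$. Quantitatively, if $\homnorm{\varphi}\le1$ then $\EnergyM(\varphi)\le 3\homnorm{\varphi}^{\min\{p,p',2\}}$, and conversely $\homnorm{\varphi}\le 3\,\EnergyM(\varphi)^{1/\max\{p,p',2\}}$ when $\EnergyM(\varphi)\le1$. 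Applying this to minimizing sequences for each open $U$, and then taking infima over open $U\supset E$, gives $\widehat{\operatorname{cap}}_M(E)=0\iff\widetilde{\operatorname{cap}}_M(E)=0$.

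\emph{Step 2: $\widetilde{\operatorname{cap}}_M$ versus $\capM$.} One inequality is immediate. For compact $K\subset U$, any $\varphi\in\mathscr D$ with $\varphi\ge\chi_K$ does not dominate $\chi_U$, so we argue in the other direction. We invoke \cite[Theorem~1.2 / Section~3]{MS}, where, under the thickness hypothesis implied by Lipschitz regularity of $\Omega$, the variational capacity of open sets is shown to coincide with the infimum of $\EnergyM$ over $\WM$-functions dominating $\chi_U$ a.e. If one prefers not to quote it, the comparison $\widetilde{\operatorname{cap}}_M(U)\le C\capM(U)$ (at least for small values) is proved as follows:
\begin{itemize}
\item take exhausting compacts $K_j\uparrow U$ and near-optimal $\varphi_j\in\mathscr D$ with $\varphi_j\ge\chi_{K_j}$;
\item truncate them by $\min\{\varphi_j,1\}$ and take convex combinations, which converge weakly in the reflexive-type components;
\item pass to a limit $\varphi\ge\chi_U$ a.e.
\end{itemize}
The reverse bound $\capM(U)\le C\widetilde{\operatorname{cap}}_M(U)$ requires approximating a $\WM$ function with $\varphi\ge\chi_U$ by functions of $\mathscr D$ dominating $\chi_K$ for each $K\Subset U$. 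This is exactly the content of the Density Lemma~\ref{lem:smooth-density-WM}, combined with mollification, which keeps $\varphi_\varepsilon\ge1-\varepsilon$ near $K$, and a rescaling by $(1-\varepsilon)^{-1}$.

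\emph{Step 3: $\capM$ versus $\capD$, $\CAPD$.} By Lemma~\ref{lem:WM-into-WD}, every $\WM$-competitor is a $\WD$-competitor, so $\capD(U)\le C\widehat{\operatorname{cap}}_M(U)$ and similarly $\CAPD\le C\,\capM$-type bounds hold on $\mathscr D$. This gives the implications from zero $M$-capacity to zero $D$-capacity. For the converse we use \cite{DPP}, where $\capD$ and $\CAPD$ are shown to have the same null sets. We also use their result that $\capD(E)=0$ is equivalent to $E$ being polar for the space $\WtD=\WM$ (cf.\ \eqref{eq:WM-WtildeD}): sets of zero $\capD$ capacity admit $\WM$-competitors of arbitrarily small norm, because a $\WD$ function with $\varphi\ge\chi_U$ can be truncated to $0\le\varphi\le1$, and bounded $\WD$ functions lie in $L^\infty(0,T;L^2)$ with time derivative in $L^{p'}(W^{-1,p'})+L^1(L^1)$, as in \cite[Theorem~2.? ]{DPP}.

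This last step is the main obstacle. Showing that zero $\WD$-capacity forces zero $\WM$-capacity needs the DPP decomposition of $\partial_t\varphi$ for truncated capacitary potentials, and that decomposition has to be upgraded to control in $\VM'$. The first route to try is to quote the DPP characterization of $\capD$-null sets directly through $\WtD$.
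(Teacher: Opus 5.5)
Your overall structure matches the paper's proof, which is essentially a chain of citations. Your Step~1 is the paper's own norm--energy comparison between $\widehat{\operatorname{cap}}_M$ and $\widetilde{\operatorname{cap}}_M$; your quantitative form is what is actually needed to pass through the outer extension. The equivalence of $\capD$-null and $\CAPD$-null sets is \cite[Proposition~2.14]{DPP} in both.

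The genuine gap is the step you flag yourself, $\capD(E)=0\Rightarrow\widehat{\operatorname{cap}}_M(E)=0$, and the argument you sketch for it would fail. Truncating a $\WD$-competitor to $0\le\varphi\le1$ gives no control of $\partial_t\varphi$ in $L^{p'}(0,T;W^{-1,p'}(\Omega))$, because cutting off by $\chi_{\{\varphi<1\}}$ is not a bounded operation in the $W^{-1,p'}(\Omega)$-norm. Moreover, a time derivative known only to lie in $L^{p'}(0,T;W^{-1,p'}(\Omega))+L^1(\Omega_T)$ need not belong to $\VM'(\Omega_T)$. So the truncated function is not an admissible competitor for $\widehat{\operatorname{cap}}_M$, let alone one with small $\homnorm{\cdot}$.

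The paper needs no such upgrade. It closes this step with \cite[Remark~2.18]{DPP}: replacing $\WD$ by $\WtD$ in \eqref{eq:capD-open}--\eqref{eq:capD-Bor} does not change the polar sets. Since $\WtD=\WM$ by \eqref{eq:WM-WtildeD}, this is exactly $\capD(E)=0\Leftrightarrow\widehat{\operatorname{cap}}_M(E)=0$, in both directions, so your embedding argument becomes redundant. Your ``first route to try'' is the right one: commit to it and drop the truncation argument.

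In Step~2, the correct source is \cite[Lemmas~A.3, A.6]{MS}, which give $\widetilde{\operatorname{cap}}_M(E)\le\capM(E)\le c(n,p)\,\widetilde{\operatorname{cap}}_M(E)$. This is comparability, not coincidence. \cite[Theorem~1.2]{MS} is instead the capacity--Hausdorff comparison, which the paper uses for Proposition~\ref{prop:diffuse}.

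Your fallback sketch for $\widetilde{\operatorname{cap}}_M(U)\le\capM(U)$ has the same truncation defect, since $\min\{\varphi_j,1\}$ need not stay bounded in $\WM$. Truncation and convex combinations are unnecessary there anyway. The near-optimal $\varphi_j\in\mathscr D$ with $\varphi_j\ge\chi_{K_j}$ are already bounded in $\WM$. Up to a subsequence they converge weakly in $L^p(0,T;W^{1,p}_0(\Omega))$ and weakly-$*$ in $L^\infty(0,T;L^2(\Omega))$, with $\partial_t\varphi_j$ converging weakly in $\VM'(\Omega_T)$, to some $\varphi$. This limit satisfies $\varphi\ge\chi_U$ a.e., and $\EnergyM(\varphi)\le\liminf_j\EnergyM(\varphi_j)$ by lower semicontinuity of the three norms.

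For the reverse bound via Lemma~\ref{lem:smooth-density-WM}, note one more requirement. The approximants must be nonnegative everywhere, since the constraint $\psi\ge\chi_K$ includes $\psi\ge0$. The cutoff--mollification construction of the appendix preserves this when $\varphi\ge0$.
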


\begin{proof}
The equivalence
\[
\capD(E)=0\quad\Longleftrightarrow\quad\CAPD(E)=0
\]
is \cite[Proposition~2.14]{DPP}. By
\cite[Remark~2.18]{DPP}, replacing $\WD$ by the space $\widetilde{W}_D$ in the
definitions \eqref{eq:capD-open} and \eqref{eq:capD-Bor} does not change the polar sets.
In view of \eqref{eq:WM-WtildeD}, this shows the equivalence of polar sets between $\capD$ and  $\widehat{\operatorname{cap}}_M$.
It remains to compare this latter capacity with $\capM$.
In \cite[Lemmas~A.3,~A.6]{MS} it is shown that
\begin{equation}\label{lemma36}
  \widetilde{\operatorname{cap}}_M(E)\le   \operatorname{cap}_M(E)\le c(n,p)\, \widetilde{\operatorname{cap}}_M(E).
\end{equation}
Moreover, if
\[
A(v):=\|\nabla v\|_{L^p(\Omega_T)},
\qquad
B(v):=\|\partial_tv\|_{\VM'},
\qquad
C(v):=\|v\|_{L^\infty(0,T;L^2(\Omega))},
\]
then
\[
\EnergyM(v)=A(v)^p+B(v)^{p'}+C(v)^2,
\qquad
\homnorm{v}=A(v)+B(v)+C(v),
\]
making obvious the equivalence of negligible sets between $\widetilde{\operatorname{cap}}_M$ and $\widehat{\operatorname{cap}}_M$. This fact, combined with \eqref{lemma36}, concludes the proof.
\end{proof}

\subsection{Quasi-continuous representatives}

A function is $\capD$-quasi-continuous if, for every $\varepsilon>0$, there
exists an open set $U_\varepsilon\subset\Omega_T$ with
$\capD(U_\varepsilon)<\varepsilon$ such that the function is continuous on
$\Omega_T\setminus U_\varepsilon$. By
Proposition~\ref{prop:capacity-null-equivalence}, the expression ``quasi
everywhere'' is independent of the chosen capacity among those in
\eqref{eq:capacity-null-equivalence}.

\begin{Theorem}
\label{thm:DPP-representative}
For every $u\in\WD(\Omega_T)$ there exists a function $\widetilde u$ such
that
\[
\widetilde u=u
\qquad
\mathcal L^{n+1}\text{-a.e. in }\Omega_T,
\]
$\widetilde u$ is $\capD$-quasi-continuous, and it is unique
$\capD$-quasi everywhere among the quasi-continuous representatives of the
same Lebesgue equivalence class. Moreover, if $u_j\to u$ strongly in
$\WD(\Omega_T)$, then there exists a subsequence such that
\[
\widetilde{u}_{j_k}\to\widetilde u
\qquad
\capD\text{-quasi everywhere in }\Omega_T.
\]
\end{Theorem}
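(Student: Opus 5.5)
The plan is the classical Deny--Lions / Kilpel\"ainen scheme: first control the capacity of superlevel sets, then pass to the limit along a rapidly converging sequence of smooth functions. I would proceed in four steps.

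\emph{Step 1 (density).} I first show that $\mathscr D$ is dense in $\WD(\Omega_T)$. This uses a spatial cut-off near $\partial\Omega$ together with a mollification in space and time, after extending $u$ in time by reflection across $t=0$ and $t=T$. This is \cite[Theorem~2.11]{DPP}, and I would take it as given.

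\emph{Step 2 (capacitary weak-type estimate).} This is the key step. For $\varphi\in\mathscr D$ and $\lambda>0$, the set $U_\lambda:=\{|\varphi|>\lambda\}$ is open. I aim to prove
\[
\capD(U_\lambda)\le C\,\Phi\bigl(\|\varphi\|_{\WD}/\lambda\bigr),
\]
where $\Phi:[0,\infty)\to[0,\infty)$ is continuous, increasing, $\Phi(0)=0$, and $C$ depends only on $n,p,\Omega,T$. The natural competitor would be $|\varphi|/\lambda$. However, its time derivative is in general not controlled in $L^{p'}(0,T;\VD')$ by that of $\varphi$, since $\partial_t|\varphi|$ involves the non-smooth sign of $\varphi$, so one cannot just plug it in.

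Instead I would build a supersolution that dominates $|\varphi|$:
\begin{enumerate}
\item Write $\partial_t\varphi=g$ with $g\in L^{p'}(0,T;\VD')$. Decompose $g=\operatorname{div}F+f$, where $F\in L^{p'}(\Omega_T;\R^n)$ and $f\in L^{2}$-type, in the spirit of Lemma~\ref{lem:divergence-representation}.
\item Let $z\in\WD(\Omega_T)$ solve a linear or $p$-Laplacian-type auxiliary problem with data $|F|$, $|f|$ (suitably arranged) and initial datum $|\varphi(0)|$. The monotone-operator theory of \cite{JLL} provides $z$ together with the bound $\|z\|_{\WD}\le C\,\Phi(\|\varphi\|_{\WD})$.
\item Show $z\ge|\varphi|$ by a comparison principle: test the difference of the equations for $\pm\varphi$ and $z$ with $(\pm\varphi-z)_+$, which is a legitimate test function in $\WD$ because of the integration-by-parts formula available there.
\end{enumerate}
Then $z/\lambda\ge\chi_{U_\lambda}$ is admissible in \eqref{eq:capD-open}, which gives the estimate.

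\emph{Step 3 (existence).} Given $u\in\WD(\Omega_T)$, use Step~1 to choose $\varphi_j\in\mathscr D$ with $\|\varphi_{j+1}-\varphi_j\|_{\WD}$ decaying so fast that $\sum_j\Phi\bigl(2^{j}\|\varphi_{j+1}-\varphi_j\|_{\WD}\bigr)<\infty$. Set $G_j:=\{|\varphi_{j+1}-\varphi_j|>2^{-j}\}$ and $V_k:=\bigcup_{j\ge k}G_j$. By countable subadditivity of $\capD$ and Step~2, $\capD(V_k)\to0$. On $\Omega_T\setminus V_k$ the sequence $\varphi_j$ converges uniformly, so the pointwise limit $\widetilde u$ is continuous there. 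Hence $\widetilde u$ is quasi-continuous, and $\widetilde u=u$ a.e. because $\varphi_j\to u$ in $L^p$.

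\emph{Step 4 (uniqueness and subsequence convergence).} For uniqueness I would show: if $f$ is quasi-continuous and $f\ge0$ a.e. in $\Omega_T$, then $f\ge0$ quasi everywhere. Given $\varepsilon>0$, take an open $U$ with $\capD(U)<\varepsilon$ and $f$ continuous off $U$. The set $\{f<0\}\cup U$ is then open, and the part $\{f<0\}\setminus U$ is Lebesgue-null. Any admissible $w\ge\chi_U$ a.e. is therefore also $\ge\chi_{\{f<0\}\cup U}$ a.e., since the admissibility condition in \eqref{eq:capD-open} only holds a.e. Hence $\capD(\{f<0\})\le\capD(\{f<0\}\cup U)\le\capD(U)<\varepsilon$. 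Applying this to $\pm(\widetilde u_1-\widetilde u_2)$ gives uniqueness q.e.

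For convergence along a subsequence, first extend Step~2 to quasi-continuous representatives of arbitrary elements of $\WD$. This follows by approximation: use the uniform convergence off the small sets $V_k$ of Step~3, together with outer regularity in \eqref{eq:capD-Bor}. Then apply the estimate to $\widetilde u_j-\widetilde u$ along a subsequence with $\sum_k\Phi\bigl(2^{k}\|u_{j_k}-u\|_{\WD}\bigr)<\infty$. A Borel--Cantelli argument with subadditivity gives $\widetilde u_{j_k}\to\widetilde u$ outside sets of arbitrarily small capacity, i.e.\ quasi everywhere.

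The main obstacle is Step~2, specifically the comparison construction producing a $\WD$-dominant of $|\varphi|$ with controlled norm. If the auxiliary-equation route becomes too technical, I would fall back on invoking \cite[Proposition~2.14 and Theorem~2.16]{DPP} for this estimate.
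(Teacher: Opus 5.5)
The paper gives no argument of its own here; it simply quotes \cite[Lemmas~2.20--2.21]{DPP}. Your Steps~1, 3 and~4 are the standard scheme and are sound once Step~2 is available. In particular, your uniqueness argument correctly uses that admissibility in \eqref{eq:capD-open} is only an a.e.\ condition.

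\textbf{The gap is Step~2.} It carries the whole content of the theorem, and the comparison construction you sketch does not deliver it.

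Admissibility in \eqref{eq:capD-open} forces $z\ge0$. So what you actually need is a majorant of $\varphi^+$ and of $(-\varphi)^+$ in $\WD$, with norm controlled by $\|\varphi\|_{\WD}$. The obvious candidate fails, since $\partial_t\varphi^+=\chi_{\{\varphi>0\}}\partial_t\varphi$ is not controlled in $L^{p'}(0,T;\VD')$.

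Comparison does not fix this.
\begin{enumerate}
\item Fix the operator, say $-\Delta_p$. The test with $(\pm\varphi-z)_+$ in your item~(3) rests on ordering the data: the datum $\eta$ of $z$ must satisfy $\eta\ge\pm(\partial_t\varphi-\Delta_p\varphi)$ in the sense of distributions. Then $\eta$ is a nonnegative measure. Testing with a fixed nonnegative cut-off shows that $\|\eta\|_{L^{p'}(0,T;\VD')}$ bounds the total variation of $\partial_t\varphi-\Delta_p\varphi$ on compact subsets of $\Omega_T$.
\item That total variation is not controlled by $\|\varphi\|_{\WD}$. Take $\varphi_N=N^{-2}\sin(N^3t)\sin(Nx_1)\psi(x)$ with a fixed $\psi\in C^\infty_c(\Omega)$. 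Then $\|\varphi_N\|_{\WD}=O(1)$: the spatial oscillation makes $\partial_t\varphi_N$ of size $O(1)$ in $W^{-1,p'}(\Omega)$, although it is of size $N$ pointwise. Meanwhile $\partial_t\varphi_N$ has total variation of order $N$ on a fixed compact set, and $\Delta_p\varphi_N$ only of order $N^{2-p}$.
\item Since $\eta=\partial_tz-\Delta_pz$ is bounded in $L^{p'}(0,T;\VD')$ in terms of $\|z\|_{\WD}$, the majorants $z$ have unbounded $\WD$-norm along this family.
\item For general elements of $L^{p'}(0,T;\VD')$, $\Div F$ is not even a measure. So ``data $|F|$, $|f|$'' has no meaning compatible with comparison.
\item If instead you absorb $F$ into the vector field, $\varphi$ and $-\varphi$ solve equations with two different operators. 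Comparison with $0$ then no longer yields $z\ge0$, and you are back to needing a single $\WD$-function above two solutions.
\item The natural repair is the solution $w$ of the parabolic obstacle problem above $\varphi$ with initial datum $\varphi(0)^+$. It does dominate $\varphi^+$, and the energy identity controls $\nabla w$ and $\sup_t\|w(t)\|_{L^2(\Omega)}$. But $\partial_t w$ contains the nonnegative Riesz measure of the contact set, which is in general not controlled in $L^{p'}(0,T;\VD')$ by $\|\varphi\|_{\WD}$.
\end{enumerate}

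\textbf{Your fallback does not close the gap either.} As used in this paper, \cite[Proposition~2.14]{DPP} and \cite[Theorem~2.16]{DPP} concern polar sets only: the coincidence of $\capD$- and $\CAPD$-null sets, and the transfer of elliptic null sets to product sets such as $\{x_0\}\times J$. Neither gives a quantitative bound on $\capD(\{|\varphi|>\lambda\})$ in terms of $\|\varphi\|_{\WD}$.

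Without such a bound, the claim $\capD(V_k)\to0$ in Step~3 and the Borel--Cantelli argument in Step~4 are unsupported. The proposal therefore reduces the theorem to its one genuinely hard estimate without proving it. To complete it, you must either find a different way of producing admissible functions for $\{|\varphi|>\lambda\}$ with controlled $\WD$-norm, or import the corresponding superlevel-set estimate from \cite{DPP}. The latter is in effect what the paper does by citing \cite[Lemmas~2.20--2.21]{DPP}.
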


\begin{proof}
This is the content of \cite[Lemmas~2.20--2.21]{DPP}.
\end{proof}

Since $\WM\hookrightarrow\WD$ continuously and the polar sets coincide, every
$u\in\WM$ has a representative uniquely determined $\capM$-quasi
everywhere.

\begin{Lemma}[Linearity of quasi-continuous representatives]
\label{lem:linear-representatives}
Let $u,v\in\WD(\Omega_T)$ and $\alpha\in\R$. Then
\[
\widetilde{u+v}=\widetilde u+\widetilde v,
\qquad
\widetilde{\alpha u}=\alpha\widetilde u,
\]
$\capD$-quasi everywhere in $\Omega_T$.
\end{Lemma}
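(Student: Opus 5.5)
The plan is to prove linearity by combining three facts: the space is linear, quasi-continuity is preserved under sums and scalar multiples, and Theorem~\ref{thm:DPP-representative} gives uniqueness up to polar sets.

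\textbf{Step 1: the candidate is quasi-continuous.} Since $\WD(\Omega_T)$ is a vector space, $u+v$ and $\alpha u$ lie in it. By Theorem~\ref{thm:DPP-representative} they therefore have quasi-continuous representatives $\widetilde{u+v}$ and $\widetilde{\alpha u}$. I will show that $\widetilde u+\widetilde v$ is also a $\capD$-quasi-continuous representative of $u+v$. Fix $\varepsilon>0$ and choose open sets $U_\varepsilon,V_\varepsilon$ with $\capD(U_\varepsilon),\capD(V_\varepsilon)<\varepsilon/2$, such that $\widetilde u$ is continuous off $U_\varepsilon$ and $\widetilde v$ is continuous off $V_\varepsilon$. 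Then $\widetilde u+\widetilde v$ is continuous on $\Omega_T\setminus(U_\varepsilon\cup V_\varepsilon)$.

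\textbf{Step 2: subadditivity, the main point to check.} It remains to show $\capD(U_\varepsilon\cup V_\varepsilon)<\varepsilon$, which needs subadditivity of $\capD$ on open sets. I would argue directly from \eqref{eq:capD-open}. Take admissible $\varphi\ge\chi_{U_\varepsilon}$ and $\psi\ge\chi_{V_\varepsilon}$, both nonnegative; nonnegativity can be assumed by passing to positive parts, which does not increase the norm (the truncation compatibility of \cite{DPP}). Then $\varphi+\psi\ge\chi_{U_\varepsilon\cup V_\varepsilon}$ a.e., and the triangle inequality for $\|\cdot\|_{\WD}$ gives
\[
\capD(U_\varepsilon\cup V_\varepsilon)\le\capD(U_\varepsilon)+\capD(V_\varepsilon).
\]
Alternatively, I would simply cite countable subadditivity of $\capD$ from \cite{DPP}, where $\capD$ is shown to be a Choquet capacity.

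\textbf{Step 3: identify the representatives.} We have $\widetilde u+\widetilde v=u+v$ $\mathcal L^{n+1}$-a.e. By the uniqueness part of Theorem~\ref{thm:DPP-representative}, it follows that $\widetilde{u+v}=\widetilde u+\widetilde v$ quasi everywhere.

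\textbf{Scalar multiples.} For $\alpha\widetilde u$ the argument is simpler. It is continuous wherever $\widetilde u$ is, so it is quasi-continuous with the same exceptional sets. It equals $\alpha u$ a.e., so uniqueness again gives $\widetilde{\alpha u}=\alpha\widetilde u$ quasi everywhere.

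\textbf{Alternative route.} One could also use approximation: take smooth $u_j\to u$ and $v_j\to v$ in $\WD$ (the quasi-continuous representatives of smooth functions are the functions themselves), and pass to a common subsequence using the convergence statement in Theorem~\ref{thm:DPP-representative}. This still requires subadditivity to handle the union of the exceptional null sets, so I prefer the direct argument above.
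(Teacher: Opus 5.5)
Your proposal follows the paper's proof: you show $\widetilde u+\widetilde v$ is continuous off $U_\varepsilon\cup V_\varepsilon$, get $\capD(U_\varepsilon\cup V_\varepsilon)<\varepsilon$ from subadditivity of $\capD$ (the paper simply cites \cite[Proposition~2.8]{DPP}), and conclude with the uniqueness part of Theorem~\ref{thm:DPP-representative}, with the scalar case identical. One correction to your direct subadditivity argument: drop the positive-part step, because the dual norm of $\partial_t\varphi^+$ is not controlled by that of $\partial_t\varphi$ in general, and it is unnecessary anyway, since any admissible $\varphi\ge\chi_{U_\varepsilon}$ is already nonnegative a.e., so $\varphi+\psi\ge\chi_{U_\varepsilon\cup V_\varepsilon}$ and the triangle inequality suffice.
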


\begin{proof}
Fix $\varepsilon>0$. There are open sets $A_\varepsilon$ and
$B_\varepsilon$ with
\[
\capD(A_\varepsilon)<\frac{\varepsilon}{2},
\qquad
\capD(B_\varepsilon)<\frac{\varepsilon}{2},
\]
such that $\widetilde u$ and $\widetilde v$ are continuous outside the
corresponding sets. By \cite[Proposition 2.8]{DPP}, we have $\capD(A_\varepsilon\cup B_\varepsilon)<\varepsilon$,
and $\widetilde u+\widetilde v$ is continuous on the complement. Hence, $\widetilde u+\widetilde v$ is $\operatorname{cap}_D$-quasi-continuous. Moreover, 
\begin{equation*}
   \widetilde u+\widetilde v=u+v,\quad \mathcal L^{n+1}\text{ almost everywhere},
\end{equation*}
and then $\widetilde u+\widetilde v$ is a quasi-continuous representative of the
Lebesgue class $u+v$. Uniqueness in
Theorem~\ref{thm:DPP-representative} gives the first identity and the proof for
scalar multiplication is identical.
\end{proof}

\section{Preliminary results}\label{sec:preliminary}
In this section we fix the notation we will use throughout this article and present some preliminary results leading up to the proof of Theorem \ref{thm:intrinsic-duality}.\\

For an integrable function $v$ and a measurable set $A\subset\R^n$ with positive and finite Lebesgue measure, denoted by $\mathcal{L}^n(A)$ or $|A|$, we write
\[
(v)_A:=\mean_A v\,dx:=\frac{1}{|A|}\int_Av\,dx.
\]
Positive constants, possibly varying from line to line, will be denoted
by $c$, $\Tilde{c}$, $C$, $C_0$, $C_0,\,C_1$ or $C_2$. Relevant dependencies will be indicated between parentheses when needed. 

\medskip 

The first result we need is an extension lemma for functions belonging to the class $W_M(\Omega_T)$ and the corresponding vector field of Lemma \ref{lem:divergence-representation}. For $R>0$, we set
\[
I_R:=(-R^p,T+R^p).
\]

\begin{Lemma}
\label{lem:time-reflection}
Let $R>0$ and let $u\in\WM(\Omega_T)$. There exists an extension
$\overline u$ to $\Omega\times I_R$ such that $\overline u=u$ on $\Omega_T$,
\[
\overline u\in
L^p(I_R;W_0^{1,p}(\Omega))
\cap L^\infty(I_R;L^2(\Omega)),
\qquad
\partial_t\overline u
\in L^{p'}(I_R;W^{-1,p'}(\Omega)),
\]
and
\begin{equation}\label{eq:time-reflection-bound}
\|\overline u\|_{L^p(I_R;W^{1,p}(\Omega))}
+
\|\overline u\|_{L^\infty(I_R;L^2(\Omega))}
+
\|\partial_t\overline u\|_{L^{p'}(I_R;W^{-1,p'}(\Omega))}
\le C(R,T)\homnorm{u}.
\end{equation}
If $F\in L^{p'}(\Omega_T;\R^n)$ satisfies
$\partial_tu=\Div F$, then the extension can be chosen together with a vector field $\overline F$ satisfying
\[
\partial_t\overline u=\Div\overline F
\quad\text{in }\Omega\times I_R,
\qquad
\|\overline F\|_{L^{p'}(\Omega\times I_R)}
\le C(R,T)\|F\|_{L^{p'}(\Omega_T)}.
\]
\end{Lemma}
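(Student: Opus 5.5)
We construct $\overline u$ by even reflection in time across $t=0$ and $t=T$, iterated finitely many times until the interval $I_R=(-R^p,T+R^p)$ is covered; $\overline F$ is reflected with a sign change. Concretely, we first treat a single reflection across $t=T$. We set
\[
\overline u(x,t):=u(x,2T-t),\qquad \overline F(x,t):=-F(x,2T-t),\qquad t\in(T,2T),
\]
and reflect analogously across $t=0$ (with $2T-t$ replaced by $-t$). Repeating the reflections across the endpoints of the successively enlarged intervals, after $N=N(R,T)\approx 1+R^p/T$ steps we cover $I_R$ and then restrict to it. Every norm on the left of \eqref{eq:time-reflection-bound} grows at most by a factor $N^{1/p}$, $1$, $N^{1/p'}$ respectively. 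Hence the constant depends only on $R,T$ (and $p$, which we regard as fixed). The same counting gives $\|\overline F\|_{L^{p'}(\Omega\times I_R)}\le C(R,T)\|F\|_{L^{p'}(\Omega_T)}$.

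The spatial properties are immediate, since the reflections act only in $t$. For a.e.\ $t$ we have $\overline u(t)\in W^{1,p}_0(\Omega)$. The $L^p(I_R;W^{1,p})$ norm is controlled by $\|\nabla u\|_{L^p(\Omega_T)}$ via the slice-wise Poincar\'e inequality, and the $L^\infty(L^2)$ norm is unchanged. On each reflected piece $(T,2T)$ the distributional time derivative is $\partial_t\overline u(t)=-(\partial_tu)(2T-t)$. On that piece this equals $\Div\overline F$, because $\partial_tu=\Div F$ and $\overline F$ carries the opposite sign.

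The main step is to check that no singular part appears at the gluing times, i.e.\ that $\partial_t\overline u$ computed on the whole interval is the function obtained by pasting the pieces. For this we use that $u\in W_M$ has a representative in $C([0,T];L^2(\Omega))$ (the embedding invoked in the introduction), so that traces $u(T)\in L^2$ are well defined.

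We test against $\varphi\in C_c^\infty(\Omega\times(0,2T))$ and split the integral at $T$. On each half we apply the integration by parts formula for $W_M$ functions. It can be obtained by density of $\mathscr D$, or directly from $\partial_t u\in L^{p'}(W^{-1,p'})$ together with continuity into $L^2$ (here $\varphi$ is smooth, so $\varphi(t)\in W^{1,p}_0\cap L^2$). This gives
\[
\int_0^{T}\!\!\int_\Omega u\,\partial_t\varphi
=\int_\Omega u(T)\varphi(T)\,dx-\int_0^T\langle\partial_tu,\varphi\rangle\,dt,
\]
and the reflected half produces $-\int_\Omega u(T)\varphi(T)\,dx$ plus its own duality term. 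The boundary terms cancel precisely because the reflected function takes the same trace $u(T)$ at $t=T$. Hence $\partial_t\overline u\in L^{p'}(0,2T;W^{-1,p'}(\Omega))$. Rewriting both duality terms via the vector fields gives $\partial_t\overline u=\Div\overline F$ on $\Omega\times(0,2T)$, and the same identity holds for general $\varphi\in L^p(W^{1,p}_0)$ by density.

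The identical argument at $t=0$, and at each subsequent gluing time, completes the construction. We then take $\overline F$ to be the reflection of the norm-minimizing field from Lemma~\ref{lem:divergence-representation}, so that $\|\partial_t\overline u\|_{L^{p'}(I_R;W^{-1,p'})}\le\|\overline F\|_{L^{p'}}\le C\|\partial_tu\|_{\VM'}$. This yields \eqref{eq:time-reflection-bound}.
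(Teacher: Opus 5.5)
Your proposal is correct and follows the paper's proof: even reflections across the temporal endpoints, a sign-reversed reflection of $F$ to get $\overline F$, and matching $L^2$ traces at the gluing times so that $\partial_t\overline u$ has no singular part. The only differences are that you verify the gluing by an explicit integration by parts where the paper cites \cite[Lemma~2.21]{MS}, and that for this integration by parts you should use your direct route (or density of $\mathscr D$ in $\WD(\Omega_T)$, \cite[Theorem~2.11]{DPP}) rather than Lemma~\ref{lem:smooth-density-WM}, since the paper's proof of that lemma in the Appendix itself uses the present reflection lemma.
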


\begin{proof}
By Lemma~\ref{lem:WM-into-WD} and the Lions-Magenes time-continuity theorem (see \cite[Remark~2.2]{DPP}),
$u$ has a representative in the space $C([0,T];L^2(\Omega))$. Reflect $u$ evenly first
across $t=0$ and then across $t=T$. At each gluing time the two $L^2$ traces
coincide, so the gluing lemma
\cite[Lemma~2.21]{MS} shows that the reflected function remains in the same
parabolic space. Repeating the construction finitely many times gives an
extension to $I_R$. The number of copies depends only on $R$ and $T$, which
yields \eqref{eq:time-reflection-bound}.

On a reflected copy of $(0,T)$, define $\overline F$ by reflecting $F$ and
multiplying it by the sign of the time reflection. For example, on $(-T,0)$, $\overline u(t)=u(-t)$, $\overline F(t)=-F(-t)$. Then $\partial_t\overline u=\Div\overline F$. Observe that, since $u \in C([0,T]; L^2(\Omega))$, the matching $L^2$-traces prevent the formation of Dirac masses in the weak temporal derivative, ensuring that the equation holds in the distributional sense across the gluing times. The same rule applies at each
successive reflection, and the norm estimate follows from the finite number of
copies.
\end{proof}
The following results include two Poincaré-type inequalities and will be useful for the estimates in the next section. In fact, it will be necessary to distinguish between cylinders entirely contained within the initial domain and cases in which they extend beyond it.  
\begin{Lemma}[Interior parabolic Poincar\'e inequality]
\label{lem:parabolic-poincare}
Let $z=(x,t)$ and let
$Q_{r,r^p}(z)\subset\Omega\times I_R$. Suppose that
$v\in L^p(t-r^p,t+r^p;W^{1,p}(B_r(x)))$
and that $\partial_tv=\Div F$ in the sense of distributions in
$Q_{r,r^p}(z)$, with $F\in L^1(Q_{r,r^p}(z);\R^n)$.
Then
\begin{equation}\label{eq:parabolic-poincare}
\begin{aligned}
\mean_{Q_{r,r^p}(z)}
|v-(v)_{Q_{r,r^p}(z)}|^p\,dx\,dt
\le
Cr^p\mean_{Q_{r,r^p}(z)}|\nabla v|^p\,dx\,dt
+
C\left(
 r^{p-1}\mean_{Q_{r,r^p}(z)}|F|\,dx\,dt
\right)^p.
\end{aligned}
\end{equation}
\end{Lemma}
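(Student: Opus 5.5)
The approach is the standard one for parabolic Poincaré inequalities: compare $v$ with a time-dependent weighted spatial mean, and control the oscillation of that mean in time through the equation $\partial_t v=\Div F$.

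Write $Q:=Q_{r,r^p}(z)$, $B:=B_r(x)$ and $\Lambda:=(t-r^p,t+r^p)$. Fix once and for all a cutoff $\eta\in C_c^\infty(B)$ with $\eta\ge0$, $\int_B\eta\,dx=1$, $\eta\le c(n)r^{-n}$ and $|\nabla\eta|\le c(n)r^{-n-1}$. Define the weighted slice mean
\[
v_\eta(s):=\int_B v(y,s)\,\eta(y)\,dy .
\]
Since $(v)_Q$ minimizes $c\mapsto\int_Q|v-c|^p$ up to a factor $2^p$, it suffices to estimate $\int_Q|v-a|^p$ for one constant $a$. I will take $a:=v_\eta(s_0)$ for a suitable $s_0\in\Lambda$, or equivalently average over $s_0$.

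The first step is to split, for each $s\in\Lambda$,
\[
|v(y,s)-a|^p\le 2^{p-1}|v(y,s)-v_\eta(s)|^p+2^{p-1}|v_\eta(s)-a|^p .
\]
The first term is handled slice-wise by the weighted Poincaré inequality on the ball: $\int_B|v-v_\eta|^p\,dy\le C r^p\int_B|\nabla v|^p\,dy$. This holds because $\eta$ is a bounded probability density comparable to $|B|^{-1}$, so $|(v)_B-v_\eta|\le \int_B|v-(v)_B|\eta\,dy\le c\,\mean_B|v-(v)_B|\,dy$. Integrating in time produces the gradient term $Cr^p\mean_Q|\nabla v|^p$.

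The second step, and the main point, is to control $|v_\eta(s_1)-v_\eta(s_2)|$ for a.e. $s_1<s_2$ in $\Lambda$. I test $\partial_tv=\Div F$ with $\varphi(y,s)=\eta(y)\chi_\varepsilon(s)$, where $\chi_\varepsilon$ is a Lipschitz approximation of $\chi_{[s_1,s_2]}$, and let $\varepsilon\to0$ at Lebesgue points of $s\mapsto v_\eta(s)$. Since $v\in L^1(Q)$, the function $v_\eta$ lies in $L^1(\Lambda)$ and a.e. point is a Lebesgue point. This gives
\[
|v_\eta(s_2)-v_\eta(s_1)|=\Big|\int_{s_1}^{s_2}\!\!\int_B F\cdot\nabla\eta\,dy\,ds\Big|\le c\,r^{-n-1}\int_Q|F|\,dy\,ds=c\,r^{p-1}\mean_Q|F| ,
\]
using $|Q|\simeq r^{n+p}$. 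The only delicate issue is justifying this test function. The test function must be compactly supported in time inside $\Lambda$, which is handled by the Lebesgue-point argument. Note that only $F\in L^1$ is needed, since $\nabla\eta$ is bounded.

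With this bound, $|v_\eta(s)-a|\le c\,r^{p-1}\mean_Q|F|$ for a.e. $s$, and integrating over $Q$ gives the second term $C(r^{p-1}\mean_Q|F|)^p$. Combining the two steps and replacing $a$ by $(v)_Q$ at the cost of a factor $2^p$ yields \eqref{eq:parabolic-poincare}, with $C=C(n,p)$.
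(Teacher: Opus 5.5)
Your argument is correct and matches what the paper intends: the paper simply defers to \cite[Corollary~5.2]{MS}, noting that no boundary condition on $\partial B_r(x)$ is used and that the only spatial ingredient is the averaged Poincar\'e inequality. Your proof is the standard argument behind that reference: compare $v$ with the weighted slice means $v_\eta(s)$, control their time oscillation by testing $\partial_t v=\Div F$ with $\eta(y)\chi_\varepsilon(s)$, which needs only $F\in L^1$, and handle the rest slice-wise using only the bound $\eta\le c(n)r^{-n}$.
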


\begin{proof}
The proof follows that of \cite[Corollary~5.2]{MS}, after translating the notation
of that paper into $Q_{r,r^p}(z)$. Indeed, although \cite[Corollary~5.2]{MS} is stated for functions with zero trace on the boundary of the spatial ball,
the same boundary condition is not used. The only spatial ingredient is the usual averaged Poincaré's inequality.
\end{proof}
\begin{Lemma}[Boundary Poincar\'e inequality]
\label{lem:boundary-poincare}
There exists
$C=C(n,p,\Omega)>0$ such that, if $x\in\Omega,\,r>0$ and
\begin{equation*}
\dist(x,\partial\Omega):=\underset{y\in\partial\Omega}{\inf}\,|x-y|\le r,
\end{equation*}
then
\begin{equation}\label{eq:boundary-poincare}
\int_{B_r(x)\cap\Omega}|v|^p\,d\xi
\le
Cr^p
\int_{B_{5r/4(x)}\cap\Omega}|\nabla v|^p\,d\xi
\end{equation}
for every $v\in C_c^\infty(\Omega)$.
\end{Lemma}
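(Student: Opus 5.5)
The plan is to prove a boundary Poincaré inequality for functions vanishing on $\partial\Omega$, using only the Lipschitz regularity of $\Omega$ (or, more generally, uniform $p$-thickness of the complement). Fix $v\in C_c^\infty(\Omega)$ and extend it by zero to $\R^n$. The extension lies in $W^{1,p}(\R^n)$ and satisfies $\nabla v=0$ a.e. outside $\Omega$. The inequality then becomes
\[
\int_{B_r(x)}|v|^p\,d\xi\le Cr^p\int_{B_{5r/4}(x)}|\nabla v|^p\,d\xi .
\]
This is a Poincaré inequality for a function vanishing on the large set $B_{5r/4}(x)\setminus\Omega$.

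The first step is geometric. Since $\dist(x,\partial\Omega)\le r$, pick $y\in\partial\Omega$ with $|x-y|\le r$. The first attempt is to show directly that
\[
|B_{5r/4}(x)\setminus\Omega|\ge c\,r^n .
\]
This is delicate, because $B_{5r/4}(x)$ only contains $B_{r/4}(y)$. Lipschitz regularity gives an exterior cone condition, and hence $|B_{r/4}(y)\setminus\Omega|\ge c(\Omega)r^n$ for $r\le r_0(\Omega)$. For $r>r_0$, use boundedness of $\Omega$: either $B_{5r/4}(x)$ still contains a ball of radius comparable to $r_0$ outside $\Omega$, or, for $r\gtrsim \operatorname{diam}\Omega$, apply the global slice Poincaré inequality on $\Omega$. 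In the latter case $\int_\Omega|v|^p\le C(\Omega)\int_\Omega|\nabla v|^p\le C(\Omega,r_0)\,r^p\int|\nabla v|^p$. Either way we obtain $|\{v=0\}\cap B_{5r/4}(x)|\ge c\,|B_{5r/4}(x)|$, with $c=c(n,\Omega)$.

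The second step is the standard Poincaré inequality for functions vanishing on a set of positive density. For $w\in W^{1,p}(B)$ with $|\{w=0\}\cap B|\ge c|B|$, the inequality
\[
\int_B|w|^p\le C(n,p,c)\,\rho^p\int_B|\nabla w|^p,\qquad B=B_\rho,
\]
follows from the mean-value Poincaré inequality. On the zero set $Z$ we have $|(w)_B|=|w-(w)_B|$. Averaging over $Z$ gives
\[
|(w)_B|^p\le \frac{1}{|Z|}\int_B|w-(w)_B|^p\le \frac{C}{|B|}\int_B|w-(w)_B|^p .
\]
Combining this with $\int_B|w-(w)_B|^p\le C\rho^p\int_B|\nabla w|^p$ yields the bound. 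Apply it with $B=B_{5r/4}(x)$ and $w=v$, then restrict the left-hand side to $B_r(x)\cap\Omega$. On the right, $\nabla v$ vanishes outside $\Omega$, so the gradient integral is only over $B_{5r/4}(x)\cap\Omega$.

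The main obstacle is the uniform lower density bound of the complement in $B_{5r/4}(x)$ for all scales $r$, with constants depending only on $\Omega$. At small scales this comes from the Lipschitz graph; at large scales it needs the case analysis above. If that bookkeeping becomes awkward, the fallback is to invoke the capacitary version directly, as in \cite[Theorem~3.7]{KK}. Uniform $p$-thickness of $\R^n\setminus\Omega$, which holds for Lipschitz domains, gives $\operatorname{cap}_p(B_{r/4}(y)\setminus\Omega;B_{r/2}(y))\ge c\,r^{n-p}$. The Maz'ya-type Poincaré inequality then yields \eqref{eq:boundary-poincare} with the enlarged radius $5r/4$ absorbing the shift from $y$ to $x$.
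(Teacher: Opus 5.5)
Your argument is correct, and it differs from the paper's in the key lemma. The paper applies the capacitary boundary Poincar\'e inequality \cite[Theorem~3.7]{KK} on the ball $A=B_{r/4}(y)$ centered at a nearest boundary point. This is exactly where uniform $p$-thickness of $\R^n\setminus\Omega$ enters, and it yields $\|\widehat v\|_{L^p(A)}\le Cr\|\nabla\widehat v\|_{L^p(A)}$. The paper then transfers this to $B=B_{5r/4}(x)$ by combining the ordinary Poincar\'e inequality on $B$ with a comparison of the averages $(\widehat v)_A$ and $(\widehat v)_B$, using $|B|/|A|=5^n$.

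You instead control $(\widehat v)_B$ through the zero set $Z=\{\widehat v=0\}\cap B\supset B_{r/4}(y)\setminus\Omega$. This needs only the elementary Poincar\'e inequality for functions vanishing on a set of positive density, together with the measure-density bound $|B_\rho(y)\setminus\Omega|\ge c(\Omega)\rho^n$ for all $y\in\partial\Omega$ and $\rho>0$. For a bounded Lipschitz domain this bound holds at every scale:
\begin{itemize}
\item for $\rho\le r_0$ it follows from the exterior cone condition;
\item for $r_0<\rho<\rho_1$ it follows from monotonicity, since $|B_\rho(y)\setminus\Omega|\ge c\,r_0^n$;
\item for large $\rho$ it follows from $|B_\rho(y)\setminus\Omega|\ge|B_\rho|-|\Omega|$.
\end{itemize}
So the separate appeal to the global Poincar\'e inequality in your large-scale case is unnecessary, though harmless.

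The two proofs are structurally parallel: both bound the mean over $B$ through a subset of comparable size. Yours is self-contained and avoids capacity altogether. What it gives up is generality. Positive measure density of the complement is strictly stronger than uniform $p$-thickness; a slit disc in $\R^2$ has uniformly $p$-thick complement for every $p>1$ but no density at the slit. Hence your main route would not support the extension to uniformly $p$-thick complements claimed in Remark~\ref{rem:weaker-boundary-assumption}. The paper's route, which is essentially your stated fallback, works verbatim in that setting.
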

\begin{proof}
Let $\widehat v\in C_c^\infty(\mathbb R^n)$ denote the zero
extension of $v$ outside $\Omega$. Choose $y\in\partial\Omega$ such that
\[
|x-y|
=
\dist(x,\partial\Omega)
\leq r,
\]
and set
\[
A:=B_{r/4}(y)\,\,
\subset\,\,
B:=B_{5r/4}(x).
\]
Since $\Omega$ is Lipschitz, its complement
$\mathbb R^n\setminus\Omega$ is uniformly $p$-thick (see Remark \ref{rem:weaker-boundary-assumption}). Hence, the boundary Poincar\'e inequality
\cite[Theorem~3.7]{KK}, applied to the ball $A$, gives 
\begin{equation}
\label{eq:KK-boundary-Poincare}
\|\widehat v\|_{L^p(A)}
\leq
Cr\|\nabla\widehat v\|_{L^p(A)}.
\end{equation}
By the standard Poincar\'e inequality on $B$, we have
\begin{equation}
\label{eq:standard-Poincare-B}
\|\widehat v-(\widehat v)_B\|_{L^p(B)}
\leq
Cr\|\nabla\widehat v\|_{L^p(B)}.
\end{equation}
Moreover,
\[
\begin{aligned}
|(\widehat v)_B-(\widehat v)_A|
=
\left|
\mean_A
\bigl((\widehat v)_B-\widehat v\bigr)\,d\xi
\right|
\leq
|A|^{-1/p}
\|\widehat v-(\widehat v)_B\|_{L^p(A)}
\leq
|A|^{-1/p}
\|\widehat v-(\widehat v)_B\|_{L^p(B)}.
\end{aligned}
\]
Also, by H\"older's inequality,
\[
|(\widehat v)_A|
\leq
|A|^{-1/p}
\|\widehat v\|_{L^p(A)}.
\]
Consequently,
\[
\begin{aligned}
|B|^{1/p}|(\widehat v)_B|
\leq
|B|^{1/p}
\left(
|(\widehat v)_B-(\widehat v)_A|
+
|(\widehat v)_A|
\right)
\leq
\left(\frac{|B|}{|A|}\right)^{1/p}
\left(
\|\widehat v-(\widehat v)_B\|_{L^p(B)}
+
\|\widehat v\|_{L^p(A)}
\right).
\end{aligned}
\]
Since
\[
\frac{|B|}{|A|}
=
\frac{|B_{5r/4}(x)|}{|B_{r/4}(y)|}
=
5^n,
\]
the preceding estimate and \eqref{eq:standard-Poincare-B} imply
\begin{equation}
\label{eq:average-on-B}
|B|^{1/p}|(\widehat v)_B|
\leq
Cr\|\nabla\widehat v\|_{L^p(B)}
+
C\|\widehat v\|_{L^p(A)}.
\end{equation}
Using Minkowski's inequality, once more \eqref{eq:standard-Poincare-B} and then
\eqref{eq:average-on-B}, we obtain
\[
\begin{aligned}
\|\widehat v\|_{L^p(B)}
\leq
\|\widehat v-(\widehat v)_B\|_{L^p(B)}
+
|B|^{1/p}|(\widehat v)_B|
\leq
Cr\|\nabla\widehat v\|_{L^p(B)}
+
C\|\widehat v\|_{L^p(A)}.
\end{aligned}
\]
The boundary estimate \eqref{eq:KK-boundary-Poincare}, together with
$A\subset B$, therefore yields
\[
\begin{aligned}
\|\widehat v\|_{L^p(B)}
\leq
Cr\|\nabla\widehat v\|_{L^p(B)}
+
Cr\|\nabla\widehat v\|_{L^p(A)}
\leq
Cr\|\nabla\widehat v\|_{L^p(B)}.
\end{aligned}
\]
Raising this inequality to the power $p$, we get
\begin{equation}
\label{eq:Poincare-on-B}
\int_B|\widehat v|^p\,d\xi
\leq
Cr^p
\int_B|\nabla\widehat v|^p\,d\xi.
\end{equation}
Finally, $B_r(x)\subset B$ and, by the definition of the zero extension,
\[
\widehat v=v
\quad\text{a.e. in }\Omega,
\qquad
\nabla\widehat v
=
\chi_\Omega\nabla v
\quad\text{a.e. in }\mathbb R^n.
\]
It follows from \eqref{eq:Poincare-on-B} that
\[
\begin{aligned}
\int_{B_r(x)\cap\Omega}|v|^p\,d\xi
\leq
\int_B|\widehat v|^p\,d\xi
\leq
Cr^p
\int_B|\nabla\widehat v|^p\,d\xi
=
Cr^p
\int_{B_{5r/4}(x)\cap\Omega}|\nabla v|^p\,d\xi.
\end{aligned}
\]
\end{proof}
The next result is a density lemma for the space $W_M(\Omega_T)$. Since the proof is rather technical, we prefer to include it in the final appendix and present only the statement here.
\begin{Lemma}[Smooth density]
\label{lem:smooth-density-WM}
Let $\Omega$ be a bounded Lipschitz open set and let $0<T<\infty$. Then, the class $\mathscr D$ introduced in \eqref{smoothspace} is dense in $\WM(\Omega_T)$ with respect to the homogeneous norm. More
precisely, for every $u\in\WM(\Omega_T)$ there exists a sequence
$\varphi_j\in\mathscr D$ such that
\[
\varphi_j\to u
\,\,\text{in }L^p(0,T;W^{1,p}_0(\Omega)),\quad
\varphi_j\to u
\,\,\text{in }L^\infty(0,T;L^2(\Omega)),\quad
\partial_t\varphi_j\to\partial_tu
\,\,\text{in }L^{p'}(0,T;W^{-1,p'}(\Omega)).
\]
\end{Lemma}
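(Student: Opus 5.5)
The density of $\mathscr D$ in $\WM(\Omega_T)$ will be proved in three stages: extend in time, cut off in space, then mollify in space and time. Each stage must control the three pieces of the homogeneous norm $\homnorm{\cdot}$. Throughout, $\partial_tu=\Div F$ with $F\in L^{p'}(\Omega_T;\R^n)$ norm-minimizing (Lemma~\ref{lem:divergence-representation}). This representation lets every operation on $\partial_t u$ be carried out on the $L^{p'}$ field $F$, which is much easier to handle than the abstract $W^{-1,p'}$ norm.

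\textbf{Step 1 (time extension).} Apply Lemma~\ref{lem:time-reflection} with some fixed $R$ to obtain $\overline u,\overline F$ on $\Omega\times I_R$. These satisfy $\partial_t\overline u=\Div\overline F$ and carry norm bounds. Since functions in $\mathscr D$ need not vanish near $t=0,T$, we only need approximation on $\Omega\times(0,T)$. Mollifying in time with radius $\varepsilon<R^p$ is therefore legitimate.

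\textbf{Step 2 (spatial cutoff).} Let $\eta_\delta\in C_c^\infty(\Omega)$ be a cutoff with $\eta_\delta=1$ where $\dist(x,\partial\Omega)>2\delta$, and $|\nabla\eta_\delta|\le C/\delta$. Set $u_\delta=\eta_\delta\overline u$. Then $\nabla u_\delta=\eta_\delta\nabla \overline u+\overline u\nabla\eta_\delta$. The error term is bounded by
\[
\int\int|\overline u|^p|\nabla\eta_\delta|^p\le \int\int_{\{\dist<2\delta\}}\Bigl|\frac{\overline u}{\dist(x,\partial\Omega)}\Bigr|^p ,
\]
which tends to $0$ by the Hardy inequality for $W^{1,p}_0$ on Lipschitz (uniformly $p$-thick) domains, applied slice-wise, together with dominated convergence.

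For the time derivative we have $\partial_t u_\delta=\eta_\delta\Div\overline F=\Div(\eta_\delta\overline F)-\overline F\cdot\nabla\eta_\delta$. The first term converges to $\Div \overline F$ in $L^{p'}(L^{p'})$. The second term is the \textbf{main obstacle}: we need $\overline F\cdot\nabla\eta_\delta\to0$ in $L^{p'}(W^{-1,p'})$. I would test it against $\varphi\in W^{1,p}_0$ and write
\[
\int \overline F\cdot\nabla\eta_\delta\,\varphi=\int_{\{\dist<2\delta\}}\overline F\cdot(\delta\nabla\eta_\delta)\,\frac{\varphi}{\delta}.
\]
Hardy's inequality gives $\|\varphi/\delta\|_{L^p(\{\dist<2\delta\})}\le C\|\varphi/\dist\|_{L^p}\le C\|\nabla\varphi\|_{L^p}$. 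Hence the $W^{-1,p'}$ norm at time $t$ is at most $C\|\overline F(t)\|_{L^{p'}(\{\dist<2\delta\})}$, which tends to $0$ in $L^{p'}$ in time by dominated convergence.

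The $L^\infty(L^2)$ part converges because $\overline u\in C([0,T];L^2)$: by continuity and compactness of $[0,T]$, $\sup_t\|\overline u(t)\|_{L^2(\{\dist<2\delta\})}\to0$. This uniform smallness of the $L^2$ mass near the boundary is exactly what continuity into $L^2$ supplies.

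\textbf{Step 3 (mollification).} For fixed $\delta$, $u_\delta$ has spatial support in a compact $K\Subset\Omega$. Convolve in $(x,t)$ with $\rho_\varepsilon$, where $\varepsilon<\dist(K,\partial\Omega)$, and multiply by a time cutoff equal to $1$ on $[0,T]$; the result lies in $\mathscr D$. Convolution commutes with $\nabla$ and with $\Div$, so $\partial_t(\rho_\varepsilon*u_\delta)=\Div(\rho_\varepsilon*G_\delta)$ with $G_\delta\in L^{p'}$. Standard $L^p$ and $L^{p'}$ convergence of mollifiers then handles the gradient and time-derivative terms. For the $L^\infty(L^2)$ term I use $u_\delta\in C([-R^p/2,T+R^p/2];L^2)$: uniform continuity in time makes $\sup_t\|\rho_\varepsilon*u_\delta(t)-u_\delta(t)\|_{L^2}\to0$. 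A diagonal choice of $\delta_j\to0$ and $\varepsilon_j\to0$ concludes the proof.
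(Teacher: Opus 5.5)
Your proposal is correct and follows the paper's route: time reflection, a spatial cutoff controlled by the Hardy inequality, compactness of $\{\overline u(t)\}$ in $L^2(\Omega)$ for the $L^\infty(L^2)$ part, space--time mollification and a diagonal choice. Your explicit commutator estimate for $\overline F\cdot\nabla\eta_\delta$ is a concrete version of the paper's argument, which combines a uniform bound for the multiplier $g\mapsto\xi_m g$ on $W^{-1,p'}(\Omega)$ with density of $C_c^\infty(\Omega)$; both rest on Hardy.

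One point in Step 3 should be tightened. The identity $\partial_t(\rho_\varepsilon*u_\delta)=\Div(\rho_\varepsilon*G_\delta)$ holds in $W^{-1,p'}(\Omega)$ only if the representing terms are compactly supported in $\Omega$, so mollify the two pieces $\Div(\eta_\delta\overline F)$ and $\overline F\cdot\nabla\eta_\delta$ from Step 2 separately. The second piece then converges in $L^{p'}(\Omega\times(0,T))\hookrightarrow L^{p'}(0,T;W^{-1,p'}(\Omega))$.
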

\subsection*{Morrey conditions and parabolic Hausdorff measure}
\begin{Remark}[Centers and small radii]\label{rem:centers}
Requiring \eqref{eq:intrinsic-morrey} only for centers in $\Omega_T$ is
equivalent, up to a fixed multiplicative constant, to requiring it for all
centers in $\R^{n+1}$. Indeed, let
$z_0\in\R^{n+1}$. If $Q_{r,r^q}(z_0)\cap\Omega_T=\emptyset$, there is nothing to prove. Otherwise choose
$w\in Q_{r,r^q}(z_0)\cap\Omega_T$. For every
$y\in Q_{r,r^q}(z_0)$,
\[
|y_x-w_x|<2r,
\qquad
|y_t-w_t|<2r^q,
\]
implying $Q_{r,r^q}(z_0)
\subset
Q_{2r,(2r)^q}(w)$. Hence, a centered Morrey estimate gives
\[
|\mu|\bigl(Q_{r,r^q}(z_0)\cap\Omega_T\bigr)
\le
M 2^{n+q-\vartheta}r^{n+q-\vartheta}.
\]
Moreover, it is enough to work with small radii $0<r\le1$. Indeed, suppose that an estimate
\[
|\mu|\bigl(Q_{r,r^q}(z)\cap\Omega_T\bigr)
\le Mr^s
\]
holds for $0<r\le1$, with $s\ge 0$. Then, for $r>1$,
\[
|\mu|\bigl(Q_{r,r^q}(z)\cap\Omega_T\bigr)
\le|\mu|(\Omega_T)
\le|\mu|(\Omega_T)r^s.
\]
Hence the estimate holds for every $r>0$ after replacing $M$ with
$\max\{M,|\mu|(\Omega_T)\}$.
\end{Remark}
In the present work, the relevant range in which condition \eqref{eq:intrinsic-morrey} holds is $0\le \vartheta\le n+q$. Indeed, if $\vartheta<0$, covering $\Omega_T$ by at most
$Cr^{-(n+q)}$ cylinders of radius $r$ and using
\eqref{eq:intrinsic-morrey} give
\[
|\mu|(\Omega_T)
\leq
CMr^{-\vartheta}
\to 0
\qquad\text{as }r\to 0^+.
\]
If $\vartheta>n+q$, a sufficiently large cylinder centered in
$\Omega_T$ contains the whole domain, and hence
\[
|\mu|(\Omega_T)
\leq
Mr^{n+q-\vartheta}
\to 0
\qquad\text{as }r\to\infty.
\]
The endpoint
$\vartheta=0$ corresponds to volume-type growth, whereas
$\vartheta=n+q$ gives a radius-independent bound and allows atoms.
In particular, the restriction $\vartheta>1$, frequently imposed in
nonlinear regularity theory, is not needed for the present
measure-theoretic duality problem.

Finally, we present a result that relates the parabolic capacity of \cite{MS} to the Hausdorff measure associated with the metric $d_q$, introduced in \eqref{eq:dp}. This result is crucial for providing a good definition of the functional associated with the measure $\mu$ and acting on the space $W_M(\Omega_T)$. 
\begin{Definition}[$q$-parabolic Hausdorff measure]\label{def:q-parabolic-Hausdorff-measure}
Let $E\subset\mathbb R^{n+1}$, $q>1,\,s\geq0,\,\delta>0$ and set

\[
\mathcal H^{s}_{q,\delta}(E)
:=
\inf
\biggl\{
\sum_{i=1}^{\infty}r_i^s:
E\subset
\bigcup_{i=1}^{\infty}Q_{r_i,r_i^q}(z_i),
\quad
0<r_i<\delta
\biggr\};
\]
the $s$-dimensional $q$-parabolic Hausdorff measure of $E$ is defined as
\[
\mathcal H_q^s(E)
:=
\lim_{\delta\to0^+}
\mathcal H^{s}_{q,\delta}(E)
=
\sup_{\delta>0}\mathcal H^{s}_{q,\delta}(E).
\]
\end{Definition}
\begin{Definition}\label{HausdorffDim}
We define the $q$-parabolic Hausdorff dimension of $E\subset\R^{n+1}$ as
\[
\dim^{\mathcal P}_{\mathcal H,q}(E)
:=
\inf\left\{
0\le s<\infty:\mathcal H_q^s(E)=0
\right\}.
\]
\end{Definition}
We again stress that parabolic Hausdorff measures built on peculiar values of $q$ play an essential role in the study of fine properties of solutions to equations as in \eqref{eq:intro-parabolic-problem}, see \cite{BESS}.
\begin{Remark}
The definition of the Hausdorff parabolic measure given in \cite[Section 5]{MS} differs slightly from the one presented here, in terms of the permissible coverings of the set. Nevertheless, the two measures have
the same null sets and define the same Hausdorff dimension. Indeed, let \(\mathcal H_{d_q}^s\) denote the Hausdorff measure associated with the metric \(d_q\), defined by coverings with arbitrary sets and using their \(d_q\)-diameters. Since
\[
Q_{r,r^q}(z)=B_{d_q}(z,r)
\qquad\text{and}\qquad
\operatorname{diam}_{d_q}\bigl(Q_{r,r^q}(z)\bigr)=2r,
\]
the cylindrical measure introduced above satisfies
\[
2^{-s}\mathcal H_{d_q}^s(E)
\leq
\mathcal H_q^s(E)
\leq
\mathcal H_{d_q}^s(E)
\]
for every \(E\subset\mathbb R^{n+1}\).  In
particular, when \(q=p\), the capacity--Hausdorff comparisons of
\cite[Theorem~1.2]{MS} apply equivalently to
\(\mathcal H_p^s\).
\end{Remark}
\begin{Proposition}[Diffuseness with respect to capacity]
\label{prop:diffuse}
Let $\mu$ be a finite signed Radon measure satisfying
\eqref{eq:intrinsic-morrey} with $q=p$ and $\vartheta<p$. Then
\[
|\mu|\ll\capM
\]
in the sense that, if $E\subset\Omega_T$ is Borel and $\capM(E)=0$, then
$|\mu|(E)=0$. Consequently,
$\mu^+$, $\mu^-$, and $\mu$ also vanish on every
$\capM$-polar Borel set.
\end{Proposition}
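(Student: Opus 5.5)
The plan is to compare $|\mu|$ with a parabolic Hausdorff measure of dimension $n$ and then use the capacity--Hausdorff comparison of \cite[Theorem~1.2]{MS}. Concretely, I would show
\[
\capM(E)=0\ \Longrightarrow\ \mathcal H_p^{s}(E)=0\quad\text{for every } s>n,
\]
and
\[
\mathcal H_p^{n+p-\vartheta}(E)=0\ \Longrightarrow\ |\mu|(E)=0 .
\]
Since $\vartheta<p$, the exponent $s:=n+p-\vartheta$ is strictly larger than $n$, so the two implications together give the claim.

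\textbf{Step 1: Morrey measures do not charge $\mathcal H^s_p$-null sets.} Let $E\subset\Omega_T$ be Borel with $\mathcal H_p^s(E)=0$ and $s=n+p-\vartheta$. For $\delta\in(0,1]$ and $\varepsilon>0$, take a covering $E\subset\bigcup_i Q_{r_i,r_i^p}(z_i)$ with $r_i<\delta$ and $\sum_i r_i^s<\varepsilon$. Remark~\ref{rem:centers} lets us use arbitrary centers at the cost of a factor $2^{s}$. Applying \eqref{eq:intrinsic-morrey} on each cylinder gives
\[
|\mu|(E)\le \sum_i |\mu|\bigl(Q_{r_i,r_i^p}(z_i)\cap\Omega_T\bigr)\le 2^sM\sum_i r_i^s\le 2^sM\varepsilon .
\]
Letting $\varepsilon\to0$ yields $|\mu|(E)=0$. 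This step is routine.

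\textbf{Step 2: capacity-null sets are Hausdorff-null.} I would invoke \cite[Theorem~1.2]{MS}, which is applicable because $\Omega$ is Lipschitz (Remark~\ref{rem:weaker-boundary-assumption}). The preceding Remark shows it transfers to $\mathcal H^s_p$, since $\mathcal H_p^s$ and $\mathcal H_{d_p}^s$ have the same null sets. My expectation is that the relevant part states that $\mathcal H_p^{n}(E)<\infty$ implies $\capM(E)=0$, and, conversely, that $\capM(E)=0$ implies $\mathcal H^{s}_p(E)=0$ for every $s>n$. The threshold $n$ is natural: in $d_p$-units, the homogeneous dimension is $n+p$ and the capacity scales like $r^{n}$, since $\capM(Q_{r,r^p})\approx r^{n+p-p}$. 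The converse direction is the one we need. If \cite{MS} only provides a quantitative bound of the form $\mathcal H^{s}_{p,\infty}(E)\lesssim \capM(E)^{\alpha}$ or the like, I would apply it directly. If instead it is stated for compact sets, I would pass to Borel $E$ through outer regularity of $\capM$ in \eqref{eq:capM-extension}: take open $U_k\supset E$ with $\capM(U_k)\to0$, apply the bound to compact subsets, and use inner regularity of $|\mu|$.

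\textbf{Expected obstacle.} The main obstacle is exactly Step 2, namely making sure that the version of the Moring--Scheven comparison available is the direction "$\capM$-null $\Rightarrow$ $\mathcal H^{s}_p$-null for $s>n$" and handling its precise formulation. If only the content-type estimate is available, an alternative route avoids it. For a compact $K\subset E$ and a test function $\varphi\in\mathscr D$ with $\varphi\ge\chi_K$, one could bound $|\mu|(K)\le\int\varphi\,d|\mu|$. Cutting off via $\min\{\varphi^+,1\}$, one then estimates this integral by a Frostman/Adams-type trace argument using the interior and boundary Poincaré inequalities (Lemmas~\ref{lem:parabolic-poincare} and~\ref{lem:boundary-poincare}). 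This gives $|\mu|(K)\lesssim \EnergyM(\varphi)^{\beta}$, and hence $|\mu|(K)=0$ when $\capM(E)=0$. This is essentially the duality estimate of Theorem~\ref{thm:intrinsic-duality} applied to smooth functions, so I would prefer the Hausdorff route.

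\textbf{Signed parts.} Once $|\mu|$ vanishes on $\capM$-polar Borel sets, so do $\mu^\pm\le|\mu|$ and $\mu=\mu^+-\mu^-$.
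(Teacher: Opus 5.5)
Your proof is correct and is essentially the paper's argument: the Morrey bound combined with the center-shifting of Remark~\ref{rem:centers} gives $|\mu|(E)\le 2^sM\mathcal H^s_p(E)$ with $s=n+p-\vartheta>n$, and \cite[Theorem~1.2]{MS} gives $\mathcal H^s_p(E)=0$ for every $\capM$-polar $E$. One side remark is wrong but unused: $\mathcal H^n_p(E)<\infty$ does not force $\capM(E)=0$, since a slice $K\times\{t_0\}$ with $K\Subset\Omega$, $|K|>0$, has finite $\mathcal H^n_p$-measure, yet $\capM(K\times\{t_0\})\ge|K|$ because of the $L^\infty(0,T;L^2(\Omega))$ term in $\EnergyM$.
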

\begin{proof}
Set $s:=n+p-\vartheta>n$. Fix $\delta>0$ and let
\[
E\subset\bigcup_{i}Q_{r_i,r_i^p}(z_i),
\qquad
0<r_i<\delta,
\]
be a countable covering. By Remark \ref{rem:centers},
we can choose $w_i\in E\cap Q_{r_i,r_i^p}(z_i)$ such that 
\[
Q_{r_i,r_i^p}(z_i)
\subset
Q_{2r_i,(2r_i)^p}(w_i).
\]
The centers $w_i$ belong to $\Omega_T$, so the Morrey estimate gives
\[
\begin{aligned}
|\mu|(E)
\le
\sum_i|\mu|\bigl(Q_{2r_i,(2r_i)^p}(w_i)\cap\Omega_T\bigr)
\le
2^sM\sum_i r_i^s;
\end{aligned}
\]
taking the infimum over all coverings and letting $\delta\to0^+$ yields
\begin{equation}\label{eq:mu-Hausdorff}
0\le |\mu|(E)\le 2^sM\calH_p^s(E).
\end{equation}
Since the complement of a Lipschitz domain is uniformly $p$-thick,
the hypotheses of \cite[Theorem~1.2]{MS} are satisfied, implying that
\[
\capM(E)=0
\quad\implies\quad
\calH_p^\sigma(E)=0,
\qquad\text{for every }\sigma>n.
\]
Applying this with $\sigma=s$ and using \eqref{eq:mu-Hausdorff} proves the
claim. Finally, for every Borel set $E$ one has 
$0\leq\mu^\pm(E)\leq|\mu|(E)$, and then $|\mu(E)|=0$ as well. Hence $\mu^+$, $\mu^-$, and $\mu$ also vanish on all $\capM$-polar Borel sets.
\end{proof}

\section{Proof of the main results}\label{sec:duality}
With the preliminary results from the previous section in hand, we are ready to prove the main result of this work. The proof combines parabolic capacity theory with a quantitative covering argument. We first consider the case $q=p$, namely we assume that \eqref{eq:intrinsic-morrey} holds on the $(r,r^p)$-cylinders with $\vartheta<p$. Once the duality result has been established in this setting, the corresponding conclusion on $(r,r^q)$-cylinders follows immediately by comparison with parabolic ones. 

\medskip

\textbf{The strategy of the proof for the case $q=p$.} 
We first consider smooth functions $\varphi$ with $\homnorm{\varphi}\le1$ and write $\partial_t\varphi=\operatorname{div}F$, with $F\in L^{p'}$: a suitable reflection in time preserves this identity and allows us to work on complete parabolic cylinders, including
those crossing the temporal endpoints.

A dyadic argument based on the interior Poincar\'e inequality,
completed by the boundary inequality when necessary, associates
with each point of a superlevel set a cylinder carrying
a sufficiently large amount of energy.
The condition $\vartheta<p$ ensures convergence of the
geometric series arising in this argument.
A Vitali covering and the Morrey bound then yield
\[
|\mu|\bigl(\{|\varphi|>\lambda\}\bigr)
\le C\lambda^{-\min\{p,p'\}},
\qquad \lambda\ge1.
\]

Since $\min\{p,p'\}>1$, integration over the level sets gives the desired $L^1(d|\mu|)$ estimate, and homogeneity removes the normalization. Finally, strong approximation in $\WM$, together with quasi-continuity and the fact that $|\mu|$ vanishes on polar sets, yields convergence almost everywhere with respect to $|\mu|$ along a subsequence. Fatou's lemma extends the estimate to all $u\in\WM(\Omega_T)$.

\medskip

\begin{proof}[Proof of Theorem \ref{thm:intrinsic-duality} for the case $q=p$]
We divide the proof into three steps.

\medskip

\noindent\textbf{Step 1: estimate for smooth functions.}
Let $\varphi\in\mathscr D$ and assume first that $\homnorm{\varphi}\le1$. By Lemma~\ref{lem:divergence-representation}, there exists
$F\in L^{p'}(\Omega_T;\R^n)$ such that $\partial_t\varphi=\Div F$ in the sense of distributions, with $\|F\|_{L^{p'}(\Omega_T)}
=\|\partial_t\varphi\|_{\VM'}$. 
In view of Lemma \ref{cor:normalized-energy-bound} we have
\begin{equation*}
\int_{\Omega_T}\left(|\nabla\varphi|^p+|F|^{p'}\right)\,dx\,dt\leq 1;
\end{equation*}
hence, applying Hölder's and the slice-wise Poincar\'e inequalities also gives
\begin{equation*}
\int_{\Omega_T}|\varphi|\,dx\,dt\leq |\Omega_T|^{1/p'}C_P(\Omega)\|\nabla\varphi\|_{L^p(\Omega_T)}\le C_0,
\end{equation*}
where $C_P(\Omega)>0$ denotes the Poincaré constant of $\Omega$. We now extend the function on the time variable, by applying Lemma \ref{lem:time-reflection}.
For a fixed $r\ge 1$, the number $N(r)$ of reflected copies of $(0,T)$ needed to cover $I_{2r}:=(-(2r)^p,T+(2r)^p)$ satisfies $N(r)\le Cr^p$. Therefore, the reflected extension satisfies
\begin{equation}\label{reflect1}
\int_{\Omega\times I_{2r}}|\overline\varphi|\,dx\,dt
\le C_1r^p,
\end{equation}
for a new constant $C_1>0$ depending only on $p,\Omega,T$. Since $|Q_{r,r^p}|=c_nr^{n+p}$,
we may fix a structural radius $r_0=r_0(n,p,\Omega,T)\geq1$ so large that
\begin{equation}\label{use2}
\frac{C_1r_0^p}{|Q_{r_0,r_0^p}|}
=
\frac{C_1}{c_nr_0^n}
\leq\frac14.
\end{equation}
We now apply the reflection lemma \ref{lem:time-reflection} to $\varphi$ and $F$, with $R=2r_0$ and set
\begin{equation*}\label{newdom}
I_*:=
\bigl(-(2r_0)^p,T+(2r_0)^p\bigr),
\qquad
\Omega_*:=\Omega\times I_*;
\end{equation*}
in particular, $\overline\varphi\in W_M(\Omega_*)$, $F\in L^{p'}(\Omega_*;\R^n)$ and
$\partial_t\overline\varphi=\Div\overline F$ continues to hold in the sense of distributions. Moreover, each spatial slice
\(\overline\varphi(\cdot,t)\) belongs to \(C_c^\infty(\Omega)\), and by \eqref{reflect1}
\begin{equation}\label{eq:extended-G-bound}
\int_{\Omega_*}|\overline{\varphi}|\,dx\,dt\le C_1r_0^p,
\qquad\int_{\Omega_*} |\overline{G}|\,dx\,dt:=\int_{\Omega_*}
\left(|\nabla\overline\varphi|^p+|\overline F|^{p'}\right)
\,dx\,dt
\le C_2,
\end{equation}
where $C_1=C_1(p,\Omega,T),\,C_2=C_2(T,r_0)>0$ are  structural constants. If $z=(x,t)\in\Omega_T$ satisfies
$\dist(x,\partial\Omega)\ge r_0$, then
$Q_{r_0,r_0^p}(z)\subset\Omega_*$ and, by
\eqref{use2} and \eqref{eq:extended-G-bound} we get
\begin{equation}\label{eq:large-average}
\mean_{Q_{r_0,r_0^p}(z)}|\overline\varphi|\,dx\,dt\le \frac{1}{|Q_{r_0,r_0^p}|}\int_{\Omega_*}|\overline\varphi|\,dx\,dt\le \frac{C_1r_0^p}{|Q_{r_0,r_0^p}|}
\le\frac14.
\end{equation}
A temporal extension is necessary because a parabolic cylinder
centered at $z=(x,t)\in\Omega_T$ need not be contained in the original time interval. 
Replacing the cylinder by
$Q_{r,r^p}(z)\cap\Omega_T$ would not permit a direct application of
the interior Poincar\'e inequality \eqref{eq:parabolic-poincare}, which is formulated on complete
cylinders. Moreover, extending $\varphi$ by zero in time is not suitable because 
\[
\partial_t
\bigl(
\chi_{(0,T)}\varphi
\bigr)
=
\chi_{(0,T)}\partial_t\varphi
+
\varphi(\cdot,0)\delta_{\{t=0\}}
-
\varphi(\cdot,T)\delta_{\{t=T\}},
\]
so that artificial Dirac masses appear at the temporal endpoints.
Extending $F$ by zero would not produce the corresponding terms,
since the divergence acts only in the spatial variables.\\

Fix $\lambda\ge1$ and $z=(x,t)\in E_\lambda(\varphi):=\{z\in\Omega_T:\varphi(z)>\lambda\}$, and set
\[
d_z:=\dist(x,\partial\Omega)
\qquad
\varrho_z:=\min\{r_0,d_z\}.
\]
We claim that there exists
$r_z\in(0,5r_0/4]$ such that
\begin{equation}\label{eq:bad-radius}
\frac{1}{|Q_{r_z,r_z^p}|}
\int_{Q_{r_z,r_z^p}(z)\cap\Omega_*}
\overline G\,dx\,dt
>
c_0\min\{\lambda^p,\lambda^{p'}\}\, r_z^{-\vartheta},
\end{equation}
where $c_0>0$ is chosen below, sufficiently small in terms of the structural data. Suppose first that
\begin{equation}\label{eq:no-interior-bad-radius}
\mean_{Q_{r,r^p}(z)}\overline G\,dx\,dt
\le c_0\min\{\lambda^p,\lambda^{p'}\}\, r^{-\vartheta}
\qquad
\text{for every }0<r\le\varrho_z.
\end{equation}
Set $r_k:=2^{1-k}\varrho_z$ for $k\in\N$, and for $r\in(0,\varrho_z]$ choose $m\in\N$ such that $r_{m+1}<r\le r_m$. From now on, we set for brevity 
\begin{equation*}
 (\overline{\varphi})_r=  (\overline{\varphi})_{Q_{r,r^p}(z)},\quad 
 (\overline{\varphi})_{r_0}=(\overline{\varphi})_{Q_{r_0,r_0^p}(z)},\quad
 (\overline{\varphi})_z= (\overline{\varphi})_{Q_{\varrho_z,\varrho_z^p}(z)},\quad (\overline{\varphi})_k= (\overline{\varphi})_{Q_{r_k,r_k^p}(z)}.
\end{equation*}
 From the telescopic expansion
\begin{equation*}
    (\overline{\varphi})_{r}-(\overline{\varphi})_{z}=(\overline{\varphi})_{r}-(\overline{\varphi})_m+\sum_{k=1}^{m-1}\left[(\overline{\varphi})_{k+1}-(\overline{\varphi})_{k}\right],
\end{equation*}
we can use elementary estimates and Hölder's inequality to obtain
\begin{equation*}
  |(\overline{\varphi})_{r}-(\overline{\varphi})_{z}|\le C\sum_{k=1}^{m}
  \biggl(
  \mean_{Q_{r_k,r_k^p}(z)}
  |\overline{\varphi}-(\overline{\varphi})_{k}|^p\,dx\,dt
  \biggr)^{1/p}\le C\sum_{k=1}^{\infty}
\biggl(
\mean_{Q_{r_k,r_k^p}(z)}
|\overline\varphi-(\overline\varphi)_{k}|^p
\,dx\,dt
\biggr)^{1/p}.
 \end{equation*}
Letting $r\to 0^+$ and using the smoothness of $\overline{\varphi}$ at $z$, we get
\begin{equation}\label{eq:infinite-telescopic}
\begin{aligned}
|\overline{\varphi}(z)-(\overline\varphi)_{z}|
&\le
C\sum_{k=1}^{\infty}
\biggl(
\mean_{Q_{r_k,r_k^p}(z)}
|\overline\varphi-(\overline\varphi)_{k}|^p
\,dx\,dt
\biggr)^{1/p}.
\end{aligned}
\end{equation}
For $0<r\le\varrho_z$, the cylinder
$Q_{r,r^p}(z)$ is contained in $\Omega_*$.
Lemma~\ref{lem:parabolic-poincare}, H\"older's inequality, and
\eqref{eq:no-interior-bad-radius} hence yield
\begin{equation*}
\begin{aligned}
\biggl(
\mean_{Q_{r_k,r_k^p}(z)}
|\overline\varphi-(\overline\varphi)_{k}|^p
\,dx\,dt
\biggr)^{1/p}
&\le Cr_k\biggl(\mean_{Q_{r_k,r_k^p}(z)}|\nabla \overline\varphi|^p
\,dx\,dt\biggr)^{1/p}+ C {r_k}^{p-1}\biggl(\mean_{Q_{r_k,r_k^p}(z)}|F|^{p'}\,dx\,dt\biggr)^{1/p'} \\
&\le C c_0^{1/p}\min\left\{\lambda,\lambda^{1/(p-1)}\right\}
r_k^{(p-\vartheta)/p}
+
C c_0^{1/p'}\min\left\{\lambda^{p-1},\lambda\right\}
r_k^{(p-\vartheta)/p'} \\
&\le C\lambda\biggl[ c_0^{1/p}
r_k^{(p-\vartheta)/p}
+ c_0^{1/p'}
r_k^{(p-\vartheta)/p'}\biggr].
\end{aligned}
\end{equation*}
Inserting this bound in \eqref{eq:infinite-telescopic}, we observe that both dyadic series converge, because $\vartheta<p$ and $\rho_z\le r_0$.
At this point, $c_0$ can be
chosen small enough, independently of $\varphi,z$ and $\lambda$, so that 
\begin{equation}\label{postpoi}
|{\varphi}(z)-(\overline\varphi)_{z}|
\le\frac{\lambda}{4}.
\end{equation}
If $\varrho_z=r_0$, then every cylinder in the dyadic chain up to the
scale $r_0$ is spatially contained in $\Omega_*$, and by \eqref{eq:large-average} we get $\varphi(z)\le\lambda/2$,
contradicting $z\in E_\lambda(\varphi)$. Hence, in this case, some
radius $r_z\in(0,r_0]$ must satisfy \eqref{eq:bad-radius}. If $\varrho_z=d_z$, the dyadic argument leading to \eqref{postpoi} is still valid up to the terminal scale $d_z$, but
it cannot in general be continued from $d_z$ to $r_0$:  for $r>d_z$, the ball $B_r(x)$ crosses the spatial boundary, and
the identity $\partial_t\overline\varphi
=
\operatorname{div}\overline F$
is known only in $\Omega_*$. This is precisely where the boundary Poincar\'e inequality \eqref{eq:boundary-poincare} is needed. Combining \eqref{postpoi} with $\varphi(z)>\lambda$ implies
$\left|(\overline\varphi)_{z}\right|
>3\lambda/4$ and, by Jensen's inequality
\begin{equation}\label{eq:large-boundary-average}
\left(\frac{3\lambda}{4}\right)^p
<
\mean_{Q_{\varrho_z,\varrho_z^p}(z)}
|\overline\varphi|^p\,dx\,dt.
\end{equation}
For almost every $s\in(t-\varrho_z^p,t+\varrho_z^p)$,
Lemma~\ref{lem:boundary-poincare} applies to $\overline{\varphi}(\cdot,s)$, with
$r=\varrho_z$. Integrating in time and
using
\[
B_{5\varrho_z/4}(x)\times(t-\varrho_z^p,t+\varrho_z^p)
\subset
Q_{5\varrho_z/4,(5\varrho_z/4)^p}(z),
\]
we obtain
\[
\int_{Q_{\varrho_z,\varrho_z^p}(z)}
|\overline\varphi|^p\,dx\,dt
\le
C\varrho_z^p
\int_{Q_{5\varrho_z/4,(5\varrho_z/4)^p}(z)\cap\Omega_*}
|\nabla\overline\varphi|^p\,dx\,dt.
\]
Combining this with \eqref{eq:large-boundary-average} gives
\begin{equation}\label{eq:boundary-energy-lower}
\frac{1}{|Q_{5\varrho_z/4,(5\varrho_z/4)^p}|}
\int_{Q_{5\varrho_z/4,(5\varrho_z/4)^p}(z)\cap\Omega_*}
\overline G\,dx\,dt
\ge
c\lambda^p\varrho_z^{-p}.
\end{equation}
Since $\varrho_z\le r_0$ and $\vartheta<p$,
\[
\varrho_z^{-p}
\ge
r_0^{\vartheta-p}\varrho_z^{-\vartheta},
\]
and then inequality \eqref{eq:boundary-energy-lower} can be written as
\begin{equation}\label{eq:boundary-energy-lower1}
\frac{1}{|Q_{5\varrho_z/4,(5\varrho_z/4)^p}|}
\int_{Q_{5\varrho_z/4,(5\varrho_z/4)^p}(z)\cap\Omega_*}
\overline G\,dx\,dt\ge cr_0^{\vartheta-p}\min\{\lambda^p,\lambda^{p'}\}\varrho_z^{-\vartheta}.
\end{equation}
After decreasing $c_0$ once more, \eqref{eq:boundary-energy-lower1} implies
\eqref{eq:bad-radius} with $r_z=5\varrho_z/4=5d_z/4\le 5r_0/4$. The claim is proved in all
cases. 

It follows that every $z\in E_\lambda(\varphi)$ determines a
radius $0<r_z\leq5r_0/4$ for which
\begin{equation}\label{eq:radius-energy}
r_z^{n+p-\vartheta}
\leq
\frac{C}{\min\{\lambda^p,\lambda^{p'}\}}
\int_{Q_{r_z,r_z^p}(z)\cap\Omega_*}
\overline G\,dx\,dt.
\end{equation}
The family $\{Q_{r_z,r_z^p}(z)\}$ covers the open set $E_\lambda(\varphi)$. By the Vitali
covering lemma for the metric $\Dp$, there is a countable pairwise disjoint
subfamily $\{Q_i\}:=\{Q_{r_i,r_i^p}(z_i)\}_{i\in\N}$ such that
\[
E_\lambda(\varphi)\subset\bigcup_iQ_{5r_i,(5r_i)^p}(z_i),
\]
and where every center $z_i$ lies in $E_\lambda(\varphi)\subset\Omega_T$. Hence the Morrey condition \eqref{eq:intrinsic-morrey} for $q=p$,
\eqref{eq:radius-energy} and the pairwise disjointness of the cylinders yield
\begin{align*}
|\mu|(E_\lambda(\varphi))
\le
\sum_i
|\mu|\bigl(Q_{5r_i,(5r_i)^p}(z_i)\cap\Omega_T\bigr)
\le
&CM\sum_i r_i^{n+p-\vartheta}\\
\le
&\frac{C}{\min\{\lambda^p,\lambda^{p'}\}}
\sum_i
\int_{Q_i\cap\Omega_*}\overline G\,dx\,dt
\le
\frac{C}{\min\{\lambda^p,\lambda^{p'}\}}
\int_{\Omega_*}\overline G\,dx\,dt,
\end{align*}
where $C=C(n,p,\Omega,T,\vartheta,M)$.
Notice that the dilated cylinders need not be contained in $\Omega_*$: they are used only in the Morrey estimate. 
By repeating the same argument with $-\varphi$ and $- F$ in place of
$\varphi$ and $ F$, we obtain
\begin{equation}\label{eq:absolute-level-estimate}
|\mu|(E_\lambda(|\varphi|))=|\mu|(\{|\varphi|>\lambda\})
\le
\frac{C}{\min\{\lambda^p,\lambda^{p'}\}}
\int_{\Omega_*}\overline G\,dx\,dt.
\end{equation}
By Cavalieri's principle, \eqref{eq:extended-G-bound}, and
\eqref{eq:absolute-level-estimate}, and since $p,p'>1$, we finally get
\begin{multline*}
\left|\int_{\Omega_T}\varphi\,d|\mu|
\right|\le 
\int_{\Omega_T}|\varphi|\,d|\mu|
=
\int_0^\infty|\mu|(\{|\varphi|>\lambda\})\,d\lambda
\le
|\mu|(\Omega_T)
+
C\left(\int_{\Omega_*}\overline G\,dx\,dt\right)
\int_1^\infty\frac{1}{\lambda^{\min\{p,p'\}}}\,d\lambda
\le C.
\end{multline*}
Once again, we highlight the fact that $C=C(n,p,\Omega,T,\vartheta,M,|\mu|(\Omega_T))$ and it does not depend on $\varphi$. By homogeneity, we infer that \eqref{maingoal} holds for every $\varphi\in\mathscr{D}$. \\\\\noindent
\textbf{Step 2: passage to general functions.}
Let $u\in\WM(\Omega_T)$. By Lemma~\ref{lem:smooth-density-WM}, choose a sequence
$\varphi_j\in\mathscr D$ such that
\[
\varphi_j\to u
\qquad\text{strongly in }\WM(\Omega_T).
\]
Lemma~\ref{lem:WM-into-WD} gives strong convergence in $\WD$. By
Theorem~\ref{thm:DPP-representative}, after passing to a (not relabeled) subsequence,
\[
\varphi_j\to\widetilde u
\qquad
\capD\text{-quasi everywhere,}
\]
and by Proposition~\ref{prop:capacity-null-equivalence}, the convergence also holds
$\capM$-quasi everywhere. Proposition~\ref{prop:diffuse} therefore gives
\[
\varphi_j\to\widetilde u
\qquad
|\mu|\text{-almost everywhere.}
\]
Fatou's lemma and the above estimate for smooth functions imply
\[\int_{\Omega_T}|\widetilde u|\,d|\mu|
\le
\liminf_{j\to\infty}
\int_{\Omega_T}|\varphi_j|\,d|\mu|
\le
C\liminf_{j\to\infty}\homnorm{\varphi_j}
=
C\homnorm{u}.
\]
This proves \eqref{maingoal} via the triangle inequality for integrals for every $\varphi\in W_M(\Omega_T)$. \\\\
\noindent
\noindent\textbf{Step 3: well-definedness and linearity.}
From the previous step, $\widetilde u\in L^1(\Omega_T,|\mu|)$, and we may
formally define
\[
L_{\mu}(u)
:=
\int_{\Omega_T}\widetilde u\,d\mu,\quad
L_{|\mu|}(u)
:=
\int_{\Omega_T}\widetilde u\,d|\mu|.
\]
The definitions are independent of the quasi-continuous representative. Indeed, if $\widetilde u_1$ and $\widetilde u_2$ are two quasi-continuous
representatives of the same Lebesgue class, then
\[
\widetilde u_1=\widetilde u_2
\quad\capD\text{-quasi everywhere},
\]
hence also $\capM$-quasi everywhere and finally $|\mu|$-almost everywhere, in view of Proposition \ref{prop:diffuse}.
Thus, the functionals are well defined. Lemma~\ref{lem:linear-representatives} gives
\[
\widetilde{u+v}=\widetilde u+\widetilde v,
\qquad
\widetilde{\alpha u}=\alpha\widetilde u
\]
$|\mu|$-almost everywhere, so $L_{|\mu|}$ is linear. The same arguments apply to $L_\mu$.  
\end{proof}
\begin{proof}[Proof on general $(r,r^q)$-cylinders]
By Remark \ref{rem:centers}, it is enough to work
with $0<r\le1$. Assuming first that $q\le p$, we have $r^p\le r^q$, and therefore $Q_{r,r^p}(z)\subset Q_{r,r^q}(z)$. Hence
\[
|\mu|\bigl(Q_{r,r^p}(z)\cap\Omega_T\bigr)
\le
M r^{n+q-\vartheta}
=
M r^{n+p-\vartheta_p},
\qquad
\vartheta_p:=p-q+\vartheta.
\]
The condition $\vartheta<q$ is equivalent to $\vartheta_p<p$. If $q>p$, the time interval of $Q_{r,r^p}(z)$ has length
$2r^p$ and can be covered by at most $N(r)
\le Cr^{p-q}$ intervals of length $2r^q$. Therefore
$Q_{r,r^p}(z)$ can be covered by at most $Cr^{p-q}$ cylinders
$Q_{r,r^q}(z_j)$ with the same spatial radius. Using
\eqref{eq:intrinsic-morrey},
\[
\begin{aligned}
|\mu|\bigl(Q_{r,r^p}(z)\cap\Omega_T\bigr)
\le
Cr^{p-q}Mr^{n+q-\vartheta}
=
CMr^{n+p-\vartheta}.
\end{aligned}
\]
The conclusion follows by the case $q=p$. Observe that, by Remark \ref{rem:centers}, the Morrey condition could also be applied to the shifted centers $z_j\in\R^{n+1}\setminus\Omega_T$, at the cost of a fixed structural constant.
\end{proof}

\begin{Remark}[Sharpness of the Morrey threshold]
\label{rem:sharpness}
The strict condition $\vartheta<\min\{p,q\}$ cannot be relaxed in general when $q\geq p$. Indeed, let $p=n\geq2,\, x_0\in\Omega$ and let $J\Subset(0,T)$ be an open and nondegenerate (i.e. with positive Lebesgue measure) interval. Define the product measure
\[
\mu:=\delta_{x_0}\otimes\mathcal L^1\llcorner J.
\]
Then $\mu$ is a finite and positive Radon measure and, for every
$z=(x,t)\in\Omega_T$, every $r>0$ and $q>1$,
\[
\begin{aligned}
\mu\bigl(Q_{r,r^q}(z)\cap\Omega_T\bigr)
=
\delta_{x_0}\bigl(B_r(x)\cap\Omega\bigr)
\mathcal L^1\bigl(J\cap(t-r^q,t+r^q)\bigr)
\leq
2r^q.
\end{aligned}
\]
In particular, for the parabolic scale $q=p$, since $p=n$, we have that $\mu$ satisfies the parabolic Morrey condition at the endpoint $\vartheta=p$. More generally, if $q\geq p$, then $\mu$ satisfies the Morrey condition on $(r,r^q)$-cylinders with $\vartheta=p=\min\{p,q\}$. Nevertheless,
\[
\mu\notin W_M'(\Omega_T).
\]
Indeed, it is classical that a point has zero relative elliptic
$n$-capacity: $\operatorname{cap}^{\mathrm{ell}}_n(\{x_0\};\Omega)=0$; see, for instance, 
\cite[Chapter~4]{EG}. By \cite[Theorem~2.16]{DPP},
\[
\operatorname{cap}_D(\{x_0\}\times J)=0.
\]
Let $J_0\Subset J$ be a compact nondegenerate interval. By
monotonicity and Proposition~\ref{prop:capacity-null-equivalence},
\[
\operatorname{cap}_M(\{x_0\}\times J_0)=0.
\]
On the other hand,
\[
\mu(\{x_0\}\times J_0)
=
\mathcal L^1(J_0)
>
0.
\]
Lemma~\ref{lem:dual-measure-polar} therefore implies that
$\mu\notin W_M'(\Omega_T)$. Thus the endpoint $\vartheta=\min\{p,q\}$
may fail for every $q\geq p$. When $1<q<p$, the sufficient
condition furnished by the main theorem is $\vartheta<q$, but the
preceding example does not rule out the endpoint $\vartheta=q$.
We therefore make no sharpness claim in this regime.
\end{Remark}
\begin{Lemma}[positive dual measures do not charge polar compact sets]
\label{lem:dual-measure-polar}
Let $\mu$ be a finite and positive Radon measure which defines an element of
$\WM'(\Omega_T)$. Then
\[
\capM(K)=0
\quad\implies\quad
\mu(K)=0,
\]
for every compact set $K\Subset\Omega_T$.
\end{Lemma}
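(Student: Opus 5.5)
The plan is to test $\mu$ against the smooth competitors from the definition \eqref{eq:capM-compact} of $\capM(K)$. We then exploit positivity to compare $\mu(K)$ with the dual norm of $\mu$ times the $\WM$-norm of the competitor. Throughout we interpret ``$\mu$ defines an element of $\WM'(\Omega_T)$'' as follows: there is a constant $C_\mu$ such that
\[
\Bigl|\int_{\Omega_T}\varphi\,d\mu\Bigr|\le C_\mu\homnorm{\varphi}\qquad\text{for every }\varphi\in\mathscr D.
\]
This is the minimal meaning, since $\mathscr D\subset\WM(\Omega_T)$ and, for smooth functions, the pairing with the measure is the integral itself. The representative issue does not arise, because elements of $\mathscr D$ are continuous.

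Fix a compact $K\Subset\Omega_T$ with $\capM(K)=0$ and let $\varepsilon\in(0,1)$. By \eqref{eq:capM-compact} there is $\varphi_\varepsilon\in\mathscr D$ with $\varphi_\varepsilon\ge\chi_K$ in $\Omega_T$ and $\EnergyM(\varphi_\varepsilon)<\varepsilon$. Writing $\EnergyM=A^p+B^{p'}+C^2$ and $\homnorm{\cdot}=A+B+C$ as in the proof of Proposition~\ref{prop:capacity-null-equivalence}, we get
\[
\homnorm{\varphi_\varepsilon}\le\varepsilon^{1/p}+\varepsilon^{1/p'}+\varepsilon^{1/2}\to0 .
\]
If $\varphi_\varepsilon\ge0$ everywhere, positivity of $\mu$ would give directly
\[
\mu(K)\le\int_{\Omega_T}\varphi_\varepsilon\,d\mu\le C_\mu\homnorm{\varphi_\varepsilon}\to0 .
\]

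The main obstacle is that competitors in $\mathscr D$ need not be nonnegative, so the inequality $\int\chi_K\,d\mu\le\int\varphi_\varepsilon\,d\mu$ can fail where $\varphi_\varepsilon<0$. We would first try truncation, replacing $\varphi_\varepsilon$ by $\varphi_\varepsilon^+$ or $\min\{\varphi_\varepsilon^+,1\}$. This leaves $\mathscr D$, and it is not clear that it preserves control of $\|\partial_t\cdot\|_{\VM'}$, since truncation is not a bounded operation on the time-derivative part.

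We would therefore argue at the level of the capacity. By \eqref{lemma36}, $\widetilde{\operatorname{cap}}_M(K)=0$. We then need the fact, presumably available in \cite{MS} (lattice properties of the variational capacity, where one shows that minimizing over nonnegative competitors bounded by $1$ gives a comparable capacity), that the infimum in the definition can be taken over $0\le\psi\le1$, $\psi\in\WM(\Omega_T)$, $\psi\ge\chi_U$ a.e. on an open neighbourhood $U\supset K$, with energy comparable to the original.

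Given such $\psi_\varepsilon\ge\chi_{U_\varepsilon}$ with $\homnorm{\psi_\varepsilon}\to0$, we approximate it by Lemma~\ref{lem:smooth-density-WM} with $\eta_j\in\mathscr D$ and mollify in a way that keeps the values near $1$ on a slightly smaller neighbourhood of $K$; since $U_\varepsilon$ is open and $K$ compact, a compactly supported smoothing works there. Alternatively, we use that $\mu$ extends continuously to $\WM$ and that the quasi-continuous representative satisfies $\widetilde\psi_\varepsilon\ge1$ quasi-everywhere on $U_\varepsilon\supset K$. In either version the positivity of $\mu$ gives
\[
\mu(K)\le\int\widetilde\psi_\varepsilon\,d\mu\le C_\mu\homnorm{\psi_\varepsilon}\to0 .
\]
In the second version one also needs that a positive dual measure does not see the exceptional polar set where $\widetilde\psi_\varepsilon<1$. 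We would handle this by working with the smooth approximants, which converge quasi-everywhere by Theorem~\ref{thm:DPP-representative}, and applying Fatou's lemma, which is legitimate because the approximants $\eta_j^+$, and hence their limit, are nonnegative.
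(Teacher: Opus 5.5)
Your second paragraph already contains a complete proof, and it is the paper's proof. The ``main obstacle'' you raise afterwards does not exist. The constraint in \eqref{eq:capM-compact} is $\varphi\ge\chi_K$ \emph{in all of} $\Omega_T$, not merely $\varphi\ge1$ on $K$. Since $\chi_K\ge0$, every admissible $\varphi_\varepsilon\in\mathscr D$ satisfies $\varphi_\varepsilon\ge0$ in $\Omega_T$ and $\varphi_\varepsilon\ge1$ on $K$. Positivity of $\mu$ then gives $\mu(K)\le\int_{\Omega_T}\varphi_\varepsilon\,d\mu\le C_\mu\homnorm{\varphi_\varepsilon}\to0$, with no truncation at all. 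Your bound $\homnorm{\varphi_\varepsilon}\le\varepsilon^{1/p}+\varepsilon^{1/p'}+\varepsilon^{1/2}$ is correct. Your reading of ``$\mu$ defines an element of $\WM'(\Omega_T)$'' as boundedness on $\mathscr D$ is exactly what the paper uses. The competitors for $\widetilde{\operatorname{cap}}_M$ are also a.e.\ nonnegative for the same reason, so the lattice property you hoped to find in \cite{MS} is not needed either.

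You should delete the detour, because it would not stand on its own. Its second version is circular. To get $\mu(K)\le\int\widetilde\psi_\varepsilon\,d\mu$, or $\mu(K)\le\int\liminf_j\eta_j^+\,d\mu$ via Fatou, you need the capacity-null set to be $\mu$-null. That set is where $\widetilde\psi_\varepsilon<1$, or where the quasi-everywhere convergence fails. Saying it is $\mu$-null is a statement of exactly the type the lemma asserts.

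The same version has a second problem. Bounding $\int\eta_j^+\,d\mu$ by $C_\mu\homnorm{\eta_j^+}$ brings back the issue you yourself flagged: truncation gives no control of $\partial_t\eta_j^+$. The first version, made precise, would at best reproduce smooth competitors $\ge\chi_K$ with small norm. The hypothesis $\capM(K)=0$ already hands you those directly.
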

\begin{proof}
If $\capM(K)=0$, there exist a sequence $\varphi_j\in\mathscr D$ such that $\varphi_j\geq\chi_K$ in $\Omega_T$,
and $\EnergyM(\varphi_j)\to0$.
In particular, $\varphi_j\geq0$ in $\Omega_T$,
$\varphi_j\geq1$ on $K$ and by
Proposition \ref{prop:capacity-null-equivalence}, also
$\homnorm{\varphi_j}\to0$. Hence, by positivity
\[
0\leq\mu(K)\leq\int_{\Omega_T}\varphi_j\,d\mu\leq\|\mu\|_{W_M'}\|\varphi_j\|_{W_M}\to0.
\]
\end{proof}

\begin{proof}[Proof of Proposition \ref{prop:dimension-dual-measures}]

To prove the first assertion, by Definition \ref{HausdorffDim} of parabolic Hausdorff dimension, we have $\mathcal H_p^n(E)=0$ and consequently $\operatorname{cap}_M(E)=0$, by \cite[Theorem~1.2]{MS}.
If $\lambda$ were a finite and positive Radon measure in
$W_M'(\Omega_T)$, then
Lemma~\ref{lem:dual-measure-polar} would give $\lambda(E)=0$.
Since $\operatorname{spt}\lambda\subset E$, this implies $\lambda=0$. For the second assertion, if $\dim^{\mathcal P}_{\mathcal H,p}(E)>n$, we may choose
\[
n<s<\min\{\dim^{\mathcal P}_{\mathcal H,p}(E),n+p\},
\]
and then $\mathcal H_p^s(E)>0$. Since $E$, endowed with the restriction
of $d_p$, is a compact metric space, Frostman's lemma
\cite[Theorem~8.17]{PM} implies existence of a nonzero Radon measure
$\lambda_E$, supported on $E$, such that
\[
\lambda_E\bigl(Q_{r,r^p}(z)\bigr)
\leq
Cr^s
\qquad
\text{for }z\in \Omega_T,\,\,r>0.
\]
Setting $\vartheta:=n+p-s$, we have 
\[
\lambda_E\bigl(Q_{r,r^p}(z)\cap\Omega_T\bigr)\le \lambda_E\bigl(Q_{r,r^p}(z)\bigr)
\leq
Cr^{n+p-\vartheta},\qquad 0<\vartheta<p;
\]
Theorem~\ref{thm:intrinsic-duality} therefore gives $\lambda_E\in W_M'(\Omega_T)$. 
\end{proof}

\subsection{The supercritical case} \label{sec:supercritical}

Here we show that in the range $\frac{2n}{n+2}\le p<\infty$ one can avoid the density argument of
Lemma~\ref{lem:smooth-density-WM} and prove the duality estimate directly on
$\WD(\Omega_T)$. Indeed, Sobolev embedding gives $W_0^{1,p}(\Omega)\hookrightarrow L^2(\Omega)$, and therefore $\WD(\Omega_T)$ and
\[
\left\{
 u\in L^p(0,T;W^{1,p}_0(\Omega)):
 \partial_tu\in L^{p'}(0,T;W^{-1,p'}(\Omega))
\right\}
\]
are Banach spaces with equivalent norms. Moreover, \cite[Lemma~2.19]{MS} gives
\[
\|u\|_{L^\infty(0,T;L^2(\Omega))}^2
\le
2\|\partial_tu\|_{L^{p'}(0,T;W^{-1,p'}(\Omega))}
\|u\|_{L^p(0,T;W^{1,p}(\Omega))}
+
\frac{C}{T^{2/p}}
\|u\|_{L^p(0,T;W^{1,p}(\Omega))}^2.
\]
Consequently, $\WD(\Omega_T)=\WM(\Omega_T)$
as Banach spaces, with equivalent norms. The covering argument of Theorem~\ref{thm:intrinsic-duality} can therefore be
run with the $\WD$-norm, giving for every $\varphi\in\mathscr D$
\[
\big|L_{|\mu|}(\varphi)\big|=\left|\int_{\Omega_T}\varphi\,d|\mu|\right|
\le C\|\varphi\|_{\WD}.
\]
If $u\in\WD(\Omega_T)$, then \cite[Theorem 2.11]{DPP} replaces Lemma \ref{lem:smooth-density-WM}, providing a sequence
$\varphi_j\in\mathscr D$ such that
\[
\varphi_j\to u
\qquad\text{strongly in }\WD(\Omega_T).
\]
Arguing exactly as in the proof of Theorem \ref{thm:intrinsic-duality}, we infer that the same convergence, up to subsequences, holds $|\mu|$-almost everywhere, and we conclude again by means of Fatou's Lemma. Thus the density of $\mathscr D$ in $\WM$ is not needed in this range.

\section{Applications to parabolic equations}\label{sec:pde-consequences}
One of the main applications of Theorem \ref{thm:intrinsic-duality} concerns the theory of existence and uniqueness of solutions to the Cauchy-Dirichlet problem
\begin{equation}\label{eq:intro-parabolic-problem}
\begin{cases}
\partial_tu-\Div a(x,t,\nabla u)=\mu
    &\text{in }\Omega_T,\\
u=0 &\text{on }\partial\Omega\times(0,T),\\
u(0)=u_0 &\text{in }\Omega,
\end{cases}
\end{equation}
where $\Omega\subset\R^n$ is a bounded domain with Lipschitz boundary and $\Omega_T:=\Omega\times(0,T)$.  $\mu$ is a finite signed Radon measure on $\Omega_T$ and
$a:\Omega\times(0,T)\times\R^n\to\R^n$ is a Carathéodory vector field, i.e., the map $(x,t)\mapsto a(x,t,\xi) $ is measurable for every fixed $\xi\in\R^n$, and $\xi\to a(x,t,\xi)$ is continuous on $\R^n$ for almost every $(x,t)\in\Omega_T$. We assume that $a(x,t,\xi)$ satisfies for $1<p<\infty$ the following assumptions:
\begin{equation}\label{assumptionsa}
\begin{cases}
\langle a(x,t,\xi),\xi\rangle
\geq \alpha|\xi|^p-b(x,t)^{p'},\\[1mm]
|a(x,t,\xi)|
\leq \beta\bigl[b(x,t)+|\xi|^{p-1}\bigr],\\[1mm]
\langle a(x,t,\xi)-a(x,t,\eta),\xi-\eta\rangle
>0 \qquad \text{if }\xi\ne\eta,
\end{cases}
\end{equation}
for almost every $(x,t)\in\Omega_T$ and every $\xi,\eta\in\R^n$.
Here, $\alpha,\beta>0$ and $0\le b\in L^{p'}(\Omega_T)$; these assumptions, in particular, cover the $p$-Laplacian case, that is the model vector field appearing in \eqref{p.laplacian}, with $b(x,t)\equiv s^{p-1}$.
Under these assumptions, if $\mu\in L^{p'}(0,T;W^{-1,p'}(\Omega))$ and $u_0\in L^2(\Omega)$, then $\eqref{eq:intro-parabolic-problem}$ admits a unique energy solution $u\in W_M(\Omega_T)$, see \cite[Chapter II, Theorem 1.2 bis]{JLL}. 

\medskip

The purpose of this section is to explain what remains true for the larger class
of measure data obtained in the previous section.
Indeed, notice that $L^{p'}(0,T; W^{-1,p'}(\Omega))\hookrightarrow W_M'(\Omega_T)$ with strict inclusion; therefore, existence for a general right-hand side in
$W_M'(\Omega_T)$ does not follow directly from the classical Lions theorem. The appropriate variational framework relies on the following representation. By \cite[Lemma~2.24 and Remark~3.3]{DPP}, every $\mu\in W'_M(\Omega_T)$ admits a decomposition of the form 
\begin{equation}\label{eq:intro-DPP-decomposition}
\begin{aligned}
\langle \mu,\varphi\rangle_{W_M'(\Omega_T),W_M(\Omega_T)}
&=
\int_0^T\langle g_1(t),\varphi(t)\rangle\,dt
-
\int_0^T\langle \partial_t\varphi(t),g_2(t)\rangle\,dt
+
\int_{\Omega_T}g_3\varphi\,dx\,dt,
\end{aligned}
\end{equation}
where 
\[
g_1\in L^{p'}(0,T;W^{-1,p'}(\Omega)),
\qquad
g_2\in L^p(0,T;W^{1,p}_0(\Omega)),
\qquad
g_3\in L^{p'}(0,T;L^2(\Omega)).
\]
In this latter case, \cite[Theorem~3.1 and Remarks~3.2--3.3]{DPP} provide a unique
variational solution, but the solution itself need not belong to $\WM(\Omega_T)$. Under decomposition \eqref{eq:intro-DPP-decomposition}, the shifted function satisfies
\begin{equation*}
\partial_t (u-g_2)= \operatorname{div}a(x,t,\nabla u)+g_1+g_3 \in L^{p'}(0,T;W^{-1,p'}(\Omega))+L^{p'}(0,T;L^2(\Omega)),
\end{equation*}
and
\[
u-g_2\in C([0,T];L^2(\Omega)),\qquad
(u-g_2)(0)=u_0
\,\,\text{in }L^2(\Omega),
\]
see \cite[Remark 3.2]{DPP}. Starting from \cite[Theorem~3.1 and Remarks~3.2--3.3]{DPP},
we have the following result. 

\begin{Proposition}[Variational solutions for Morrey measures]\label{prop:DPP-variational-solution}
Assume that $\mu$ is a finite signed Radon measure satisfying any of the preceding hypotheses implying $\mu\in\WM'(\Omega_T)$, and let $u_0\in L^2(\Omega)$. Under hypothesis
\eqref{assumptionsa}, problem~\eqref{eq:intro-parabolic-problem} admits a unique
variational solution
\[
u\in L^p(0,T;W^{1,p}_0(\Omega)),
\]
with $u-g_2\in\WD(\Omega_T)$, where $g_2$ is taken from a fixed decomposition
\eqref{eq:intro-DPP-decomposition}, satisfying
\begin{equation*}
-\int_0^T
 \langle\partial_t\varphi,u\rangle\,dt
-\int_\Omega u_0(x)\varphi(x,0)\,dx
+
\int_{\Omega_T}\langle a(x,t,\nabla u),\nabla\varphi\rangle\,dx\,dt
=
\int_{\Omega_T}\widetilde\varphi\,d\mu,
\end{equation*}
for every $\varphi\in\WM(\Omega_T)$ such that $\varphi(T)=0$ in $L^2(\Omega)$.
\end{Proposition}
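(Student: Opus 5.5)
The plan is to deduce Proposition~\ref{prop:DPP-variational-solution} from the abstract existence and uniqueness theory of \cite[Theorem~3.1 and Remarks~3.2--3.3]{DPP}. That theory applies to any datum in $\WM'(\Omega_T)$ written in the form \eqref{eq:intro-DPP-decomposition}. So the real work is to identify the measure $\mu$ with an element of $\WM'(\Omega_T)$, and to check that the pairing produced by the decomposition is exactly $\int_{\Omega_T}\widetilde\varphi\,d\mu$.

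\emph{Step 1.} Under the Morrey hypotheses, Theorem~\ref{thm:intrinsic-duality} gives the bound $|L_\mu(\varphi)|\le C\homnorm{\varphi}$ for every $\varphi\in\WM(\Omega_T)$. Here $L_\mu(\varphi)=\int\widetilde\varphi\,d\mu$, and it is well defined because $|\mu|$ does not charge $\capM$-polar sets (Proposition~\ref{prop:diffuse}). The same bound holds for any other hypothesis already shown to imply duality, such as the product-measure examples, via the same theorem. Hence $L_\mu\in\WM'(\Omega_T)$.

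\emph{Step 2.} I then invoke \cite[Lemma~2.24 and Remark~3.3]{DPP} to write $L_\mu$ in the form \eqref{eq:intro-DPP-decomposition} with some $g_1,g_2,g_3$, and fix this decomposition. \cite[Theorem~3.1]{DPP} then yields a unique $u\in L^p(0,T;W^{1,p}_0(\Omega))$ with $u-g_2\in\WD(\Omega_T)$ satisfying the weak formulation. In that formulation the right-hand side is $\langle L_\mu,\varphi\rangle$, tested on $\varphi\in\WM$ with $\varphi(T)=0$. By Step~1 this pairing equals $\int\widetilde\varphi\,d\mu$, which gives the displayed identity. Uniqueness in \cite{DPP} does not depend on which decomposition is chosen, because the formulation involves only the functional $L_\mu$.

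\emph{Main obstacle.} The delicate point is the mismatch of test-function classes. \cite{DPP} formulates the problem with tests in $\WD$, or in $\WtD$ by \cite[Remark~2.18]{DPP}. By \eqref{eq:WM-WtildeD}, $\WtD=\WM$, so the class of admissible tests agrees with ours. I also need the two functionals to coincide on all of $\WM$, not merely on $\mathscr D$. For this I would argue by density (Lemma~\ref{lem:smooth-density-WM}), using continuity of both sides in the $\WM$-norm together with quasi-everywhere convergence along a subsequence, exactly as in Step~2 of the proof of Theorem~\ref{thm:intrinsic-duality}. Finally, the initial-datum term: the time-continuity of $u-g_2$ from \cite[Remark~3.2]{DPP} makes sense of $u(0)=u_0$, and the term $-\int u_0\varphi(0)$ arises from integrating by parts in time for tests with $\varphi(T)=0$.
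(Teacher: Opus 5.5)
Your proposal follows the paper's argument: Theorem~\ref{thm:intrinsic-duality} places $L_\mu$ in $\WM'(\Omega_T)$, and \cite[Theorem~3.1]{DPP}, in the form of \cite[Remark~3.3]{DPP} with data in $\WtD'(\Omega_T)$ and test functions in $\WtD(\Omega_T)=\WM(\Omega_T)$, then gives existence and uniqueness with the right-hand side acting as $\int_{\Omega_T}\widetilde\varphi\,d\mu$. Your extra density check is correct but not needed, because the decomposition is of $L_\mu$ itself, which the theorem already makes continuous on all of $\WM(\Omega_T)$; also, the switch to $\WtD$ test functions is \cite[Remark~3.3]{DPP}, whereas \cite[Remark~2.18]{DPP} concerns the capacities.
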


\begin{proof}
Theorem~3.1 of \cite{DPP} gives existence and uniqueness for right-hand sides
in $\WD'(\Omega_T)$. Remark~3.3 of the same paper states that the theorem can
be formulated with right-hand side in $\WtD'(\Omega_T)$ and test functions in
$\WtD(\Omega_T)$. Since $\WtD=\WM$, it applies to the present datum. The
measure acts through the quasi-continuous representative by
Theorem~\ref{thm:intrinsic-duality}.
\end{proof}
The variational solution provided by Proposition \ref{prop:DPP-variational-solution} need not belong to $W_M(\Omega_T)$: its relation to the energy framework is clarified by the preceding decomposition. In general, $u-g_2$ need not belong to $\WM$, because of the term
$g_3\in L^{p'}(0,T;L^2(\Omega))$. However, in the range $p\ge\frac{2n}{n+2}$ discussed in
Section~\ref{sec:supercritical}, the embedding
$L^2(\Omega)\hookrightarrow W^{-1,p'}(\Omega)$ and the preceding identity imply that $u-g_2\in\WM(\Omega_T)$. If the datum $\mu\in L^{p'}(0,T;W^{-1,p'}(\Omega))$, we can choose $g_2=g_3=0$ in \eqref{eq:intro-DPP-decomposition}, so that the shift
disappears and one recovers the classical energy solution of \cite{JLL}.

We finally turn to renormalized solutions. For soft measure data and every $u_0\in L^1(\Omega)$, existence and uniqueness follow from \cite[Theorem~1.3]{DPP}, extending the earlier $L^1$ theories
of Blanchard--Murat and Prignet and the measure-data approach of Boccardo--Gallou\"et; see \cite{BG,BM,P}. Following \cite[Definition~2.22]{DPP}, the class of soft measures
is defined by
\[
\mathcal M_0(\Omega_T):=\big\{\lambda\in\mathcal M_b(\Omega_T):\lambda(E)=0\text{ for every Borel set }E\subset\Omega_T\text{ with }\capD(E)=0\big\},
\]
where $\mathcal M_b(\Omega_T)$ denotes the space of finite
signed Radon measures on $\Omega_T$. This class
contains the classical $L^1$-data. Indeed, if
$f\in L^1(\Omega_T)$ and $\lambda:=f(x,t)\,dx\,dt$, then for every open set
$U\subset\Omega_T$ and every admissible function $v$ in the definition of
$\capD(U)$, we have
\[
|U|^{1/p}
\le\|v\|_{L^p(\Omega_T)}
\le C\|v\|_{\WD}.
\]
Taking the infimum and then the outer extension gives $|E|^{1/p}\le C\capD(E)$ for every Borel set $E$; therefore
\[
\capD(E)=0
\quad\implies\quad
\mathcal L^{n+1}(E)=0
\quad\implies\quad
\lambda(E)=\int_E f\,dx\,dt=0.
\]
Hence $L^1(\Omega_T)\subset\mathcal M_0(\Omega_T)$. The inclusion is strict,
because soft measures may be singular with respect to Lebesgue measure.

\begin{Proposition}[Renormalized solutions]
Assume that $\mu$ is a finite signed Radon measure satisfying any of the preceding hypotheses implying
$\mu\in\WM'(\Omega_T)$. Then, under
\eqref{assumptionsa}, for every $u_0\in L^1(\Omega)$,
problem~\eqref{eq:intro-parabolic-problem} admits a unique renormalized solution in the
sense of \cite[Theorem~1.3]{DPP}.
\end{Proposition}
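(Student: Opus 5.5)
The plan is to reduce everything to \cite[Theorem~1.3]{DPP}, which gives existence and uniqueness of renormalized solutions for every $u_0\in L^1(\Omega)$ and every soft datum $\mu\in\mathcal M_0(\Omega_T)$, under structure conditions of the type \eqref{assumptionsa}. The whole task is therefore to show that the hypotheses placed on $\mu$ force $\mu\in\mathcal M_0(\Omega_T)$. In other words, we must show that $\mu(E)=0$ for every Borel set $E$ with $\capD(E)=0$.

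There are two routes, and I would use both.

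The first is the direct route, available under the Morrey hypothesis of Theorem~\ref{thm:intrinsic-duality}.
\begin{itemize}
\item When $q=p$, Proposition~\ref{prop:diffuse} already gives $|\mu|(E)=0$ whenever $\capM(E)=0$.
\item When $q\neq p$, the comparison in the proof on general $(r,r^q)$-cylinders converts the hypothesis into a parabolic Morrey bound with exponent $\vartheta_p<p$ on $Q_{r,r^p}$-cylinders, so the same proposition applies.
\item Proposition~\ref{prop:capacity-null-equivalence} identifies $\capM$-polar sets with $\capD$-polar sets.
\end{itemize}
Together these give $|\mu|(E)=0$, hence $\mu(E)=0$, on every $\capD$-polar Borel set, i.e.\ $\mu\in\mathcal M_0$.

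The second route covers any finite signed measure known only to lie in $\WM'(\Omega_T)$. Here I would invoke the characterization in \cite{DPP}: a finite measure is soft if and only if it belongs to $L^1(\Omega_T)+\WD'(\Omega_T)$. The point to check is that this characterization remains valid with $\WD'$ replaced by $\WtD'=\WM'$. \cite[Remark~2.18 and Remark~3.3]{DPP} allow this replacement, and it is consistent with Proposition~\ref{prop:capacity-null-equivalence}, which says the two spaces have the same polar sets.

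As a sanity check for positive measures, one can argue directly. Lemma~\ref{lem:dual-measure-polar} gives $\mu(K)=0$ for polar compacts $K$. Inner regularity of the Radon measure $\mu$ then extends this to polar Borel sets, since compact subsets of a polar set are polar.

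For signed measures the direct argument does not go through. The Jordan parts $\mu^\pm$ need not lie in $\WM'$ individually, so Lemma~\ref{lem:dual-measure-polar} cannot be applied to each part. This is the main obstacle, and it is why I would rely on the decomposition result of \cite{DPP}, or on the Morrey route, where $|\mu|$ itself is controlled.

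Once $\mu\in\mathcal M_0(\Omega_T)$ is established, I would check that \eqref{assumptionsa} matches the assumptions of \cite[Theorem~1.3]{DPP}, namely coercivity, growth with $b\in L^{p'}$, and strict monotonicity. That theorem then yields existence and uniqueness of the renormalized solution for each $u_0\in L^1(\Omega)$.
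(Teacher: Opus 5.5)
Your first route is the paper's proof. It uses Proposition~\ref{prop:capacity-null-equivalence} to pass from $\capD$- to $\capM$-polar sets, Proposition~\ref{prop:diffuse} to get $|\mu|(E)=0$, and then \cite[Theorem~1.3]{DPP}. Your explicit reduction of the $(r,r^q)$-hypothesis to a Morrey bound on parabolic cylinders spells out a step the paper uses only tacitly, since Proposition~\ref{prop:diffuse} is stated only for $q=p$.

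The second route, however, is unnecessary and, as written, unsupported. It needs the converse implication: that a finite signed measure in $L^1(\Omega_T)+\WtD'(\Omega_T)$ charges no $\capD$-polar set. According to this paper, \cite[Remarks~2.18 and~3.3]{DPP} compare polar sets and reformulate the variational theorem and decomposition on $\WtD$; you have not shown that they contain this converse. Equal polar sets do not by themselves transfer a softness criterion from $\WD'$ to the larger space $\WM'$. Elements of $\WM'$ involve $g_2$ only in $L^p(0,T;W^{1,p}_0(\Omega))$ in \eqref{eq:intro-DPP-decomposition}, which for $p<2n/(n+2)$ is not contained in $L^p(0,T;\VD)$. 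For signed measures this converse is exactly the delicate point you yourself identify. So either drop the route, or support it with a reference that actually proves the converse. Like your first route, the paper's argument only treats the Morrey-type hypotheses, via Proposition~\ref{prop:diffuse}. Your inner-regularity argument for positive measures via Lemma~\ref{lem:dual-measure-polar} is fine as it stands.
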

\begin{proof}
Let $E\subset\Omega_T$ be Borel and assume that
$\capD(E)=0$. By Proposition~\ref{prop:capacity-null-equivalence}, we also get $\capM(E)=0$.
Proposition~\ref{prop:diffuse} therefore gives $|\mu|(E)=0$, and consequently $\mu(E)=0$.
Thus $\mu\in\mathcal M_0(\Omega_T)$ and the conclusion follows
from \cite[Theorem~1.3]{DPP}.
\end{proof}
\section{Appendix: proof of the density lemma}

We conclude the paper with the proof of
Lemma~\ref{lem:smooth-density-WM}. The argument combines the spatial
localization used in \cite[Lemma~2.22]{MS} with the  regularization procedure of
\cite[Lemma~A.1]{DPP}. We emphasize that
\cite[Theorem~2.11]{DPP} already gives density of $\mathscr{D}$ in the larger space
$W_D(\Omega_T)$. The additional point needed here is the strong
convergence of the time derivatives in $L^{p'}(0,T;W^{-1,p'}(\Omega))$.\\\\\noindent
\textbf{Step 1: reflection and spatial localization.}
By Lemma~\ref{lem:WM-into-WD} and the Lions--Magenes time-continuity theorem (see \cite[Remark~2.2]{DPP}),
$u$ has a representative in the space $C([0,T];L^2(\Omega))$. Applying Lemma~\ref{lem:time-reflection} with $R=T^{1/p}$, we extend
$u$ by even reflections to $I_*:=(-T,2T)$, and
we denote the extension by $\overline u$. It satisfies
\begin{equation*}
\|\overline u\|_{L^p(I_*;W^{1,p}(\Omega))}
+
\|\partial_t\overline u\|_{L^{p'}(I_*;W^{-1,p'}(\Omega))}
+
\|\overline u\|_{L^\infty(I_*;L^2(\Omega))}
\leq
C\|u\|_{W_M(\Omega_T)}.
\end{equation*}
Moreover $\overline u\in C(\overline{I_*};L^2(\Omega))$. Following the cutoff construction in
\cite[Lemma~2.22]{MS}, choose
$\xi_m\in C_c^\infty(\Omega)$ such that
\[
0\leq\xi_m\leq1,\quad
\xi_m=0
\,\,\text{if }\dist(x,\partial\Omega)\leq\frac1m,
\quad
\xi_m=1
\,\,\text{if }\dist(x,\partial\Omega)\geq\frac2m,\quad\text{and}\,\,
|\nabla\xi_m|\leq Cm.
\]
Set $u_m:=\xi_m\overline u$. We claim that
\begin{equation*}
u_m\longrightarrow\overline u
\qquad
\text{strongly in }
L^p(I_*;W_0^{1,p}(\Omega))\cap L^\infty(I_*;L^2(\Omega)),
\end{equation*}
and
\begin{equation}\label{eq:density-cutoff-derivative-short}
\partial_tu_m
\longrightarrow
\partial_t\overline u
\qquad
\text{strongly in }L^{p'}(I_*;W^{-1,p'}(\Omega)).
\end{equation}
For the spatial norm, observe that
\[
\nabla(u_m-\overline u)
=
(\xi_m-1)\nabla\overline u
+
\overline u\,\nabla\xi_m.
\]
The first term in the right-hand side converges to zero 
in $L^p(\Omega\times I_*)$ by dominated convergence. For the second
one, thanks to Hardy's inequality we have
\[
\int_{\Omega\times I_*}|\overline u\,\nabla\xi_m|^p\,dx\,dt
\leq
C\int_{\Omega\times I_*}\frac{|\overline u(\cdot,t)|^p}{|\dist(x,\partial\Omega)|^p}
\chi_{\{\dist(x,\partial\Omega)<2/m\}}\,dx\,dt\le  C\int_{\Omega\times I_*}|\nabla \overline{u}|^p\,dx\,dt.
\]
Therefore, we conclude again by dominated convergence.
To prove convergence in $L^\infty(I_*;L^2(\Omega))$, observe that
$1-\xi_m$ is supported in
\[
B_m:=\{x\in\Omega:\dist(x,\partial\Omega)<2/m\}. 
\]
For any $t \in I_*$, we can bound the $L^2$-norm as follows
\begin{equation}\label{boundH}
  \|u_m(t) - \bar{u}(t)\|_{L^2(\Omega)} = \|\bar{u}(t)(1 - \xi_m)\|_{L^2(\Omega)} \le \|\bar{u}(t) \chi_{B_m}\|_{L^2(\Omega)},  
\end{equation}
where $\bar u(t):=\bar u(\cdot,t)$. We need to prove that $\sup_{t \in I_*} \|\bar{u}(t) \chi_{B_m}\|_{L^2(\Omega)} \to 0$ as $m \to \infty$. Since $\bar{u}$ belongs to $C(\bar{I}_*; L^2(\Omega))$, and the time interval $\bar{I}_*$ is compact, its continuous image  $\mathcal{K} := \{\bar{u}(\cdot,t) : t \in \bar{I}_*\}$ is a compact subset of $L^2(\Omega)$. Fix $\varepsilon > 0$. By the compactness of $\mathcal{K}$, we can cover it with a finite number of open balls in $L^2(\Omega)$ of radius $\varepsilon$, centered at some elements $v_1, v_2, \dots, v_M \in \mathcal{K}$. 
Since there are only finitely many centers, we can find an integer $m_0$ large enough such that for all $m \ge m_0$, we simultaneously have
\[
    \|v_i \chi_{B_m}\|_{L^2(\Omega)} < \varepsilon \quad \text{for all } i = 1, \dots, M.
\]
Now, let $t \in I_*$ be an arbitrary time. Thus, there exists some index $i$ such that $\|\bar{u}(t) - v_i\|_{L^2(\Omega)} < \varepsilon$. Using the triangle inequality, we can finally estimate the norm on the strip $B_m$ as follows
\begin{align*}
    \|\bar{u}(t) \chi_{B_m}\|_{L^2(\Omega)} \le \|(\bar{u}(t) - v_i) \chi_{B_m}\|_{L^2(\Omega)} + \|v_i \chi_{B_m}\|_{L^2(\Omega)} 
    <  2\varepsilon.
\end{align*}
Since this bound holds for every $t \in I_*$ provided that $m \ge m_0$, taking the supremum over $t$ and recalling \eqref{boundH} yields
\[
    \|u_m - \bar{u}\|_{L^\infty(I_*; L^2(\Omega))} \le 2\varepsilon.
\]
By arbitrariness of $\varepsilon>0$, the uniform convergence follows. It remains to justify
\eqref{eq:density-cutoff-derivative-short}. For
$g\in W^{-1,p'}(\Omega)$, define $\xi_mg\in W^{-1,p'}(\Omega)$ by setting
\[
\langle\xi_mg,\phi\rangle_{W^{-1,p'}(\Omega),W_0^{1,p}(\Omega)}
:=
\langle g,\xi_m\phi\rangle_{W^{-1,p'}(\Omega),W_0^{1,p}(\Omega)},
\qquad
\phi\in W_0^{1,p}(\Omega).
\]
Hardy's inequality gives the uniform multiplier estimate
\begin{equation*}
\|\xi_m\phi\|_{W^{1,p}(\Omega)}
\leq
C\|\phi\|_{W^{1,p}(\Omega)},
\end{equation*}
and consequently
\begin{equation}\label{unif1}
\|\xi_mg\|_{W^{-1,p'}(\Omega)}
\leq
C\|g\|_{W^{-1,p'}(\Omega)}.
\end{equation}
Furthermore,
\begin{equation}\label{unif2}
\xi_mg\longrightarrow g
\qquad\text{strongly in }W^{-1,p'}(\Omega)
\end{equation}
for every $g\in W^{-1,p'}(\Omega)$. To see this, it is enough to use the density
of $C_c^\infty(\Omega)$ in $W^{-1,p'}(\Omega)$: for a compactly supported smooth
$g$, one has $\xi_mg=g$ for all sufficiently large $m$, and the
general conclusion follows from estimate \eqref{unif1}.
Recall that the density of $C_c^\infty(\Omega)$ in $W^{-1,p'}(\Omega)$ follows,
for instance, by writing $g=\operatorname{div}F$ with
$F\in L^{p'}(\Omega;\mathbb R^n)$ and approximating $F$ in
$L^{p'}$ by compactly supported smooth vector fields.
To conclude, we apply the spatial result \eqref{unif2} pointwise in time to $\partial_t \bar{u}(t) \in W^{-1,p'}(\Omega)$, obtaining
\begin{equation}\label{converg}
\|\xi_m \partial_t \bar{u}(t) - \partial_t \bar{u}(t)\|_{W^{-1,p'}(\Omega)} \to 0 \quad \text{pointwise a.e. in } I_*.
\end{equation}
Using the triangle inequality and \eqref{unif1}, we can construct an integrable bound for our sequence
\[
    \|\xi_m \partial_t \bar{u}(t) - \partial_t \bar{u}(t)\|_{W^{-1,p'}(\Omega)} \le \|\xi_m \partial_t \bar{u}(t)\|_{W^{-1,p'}(\Omega)} + \|\partial_t \bar{u}(t)\|_{W^{-1,p'}(\Omega)} \le C\|\partial_t \bar{u}(t)\|_{W^{-1,p'}(\Omega)}.
\]
Since $C^{p'}\|\partial_t \bar{u}(t)\|_{W^{-1,p'}(\Omega)}^{p'}$ is integrable over $I_*$, using  \eqref{converg}, we can apply the dominated convergence theorem to conclude that $\xi_m\partial_t\overline u=\partial_t u_m \to \partial_t \bar{u}$ in $L^{p'}(I_*; W^{-1,p'}(\Omega))$. This concludes the first step, and shows that
\begin{equation}\label{eq:density-cutoff-WM-short}
u_m\longrightarrow u
\qquad
\text{strongly in }W_M(\Omega_T)
\end{equation}
after restriction to $\Omega_T$.

\medskip
\noindent
\textbf{Step 2: regularization for fixed $m$.}
Fix $m$, choose $\chi\in C_c^\infty(I_*)$ such that
$\chi\equiv 1$ in a neighbourhood of $[0,T]$ and consider 
\[
w_m(x,t):=\chi(t)u_m(x,t).
\]
Since $\xi_m$ has compact support in $\Omega$, we have $\supp w_m\Subset\Omega\times I_*$, and $w_m=u_m$
in a neighbourhood of $\overline{\Omega_T}$. Let $\rho_\varepsilon$ be a standard space--time mollifier and,
after extending $w_m$ by zero, define $w_{m,\varepsilon}:=\rho_\varepsilon*w_m$. For $\varepsilon>0$ sufficiently small, $w_{m,\varepsilon}\in C_c^\infty(\Omega\times I_*)$, and hence
\[
\varphi_{m,\varepsilon}
:=
\left.w_{m,\varepsilon}\right|_{\Omega_T}
\in\mathscr D.
\]
Since $w_m$ has compact support in $\Omega\times I_*$, the
regularization result
\cite[Lemma~A.1]{DPP} gives
\[
w_{m,\varepsilon}
\longrightarrow w_m
\qquad
\text{strongly in }W_D(\Omega\times I_*).
\]
In particular,
\begin{equation}\label{eq:density-DPP-spatial-short}
\varphi_{m,\varepsilon}
\longrightarrow u_m
\qquad
\text{strongly in }L^p(0,T;W_0^{1,p}(\Omega)).
\end{equation}
Moreover, the continuous embedding $W_D(\Omega\times I_*)\hookrightarrow C(\overline{I_*};L^2(\Omega))$ yields
\begin{equation}\label{eq:density-DPP-H-short}
\varphi_{m,\varepsilon}
\longrightarrow u_m
\qquad
\text{strongly in }L^\infty(0,T;L^2(\Omega)).
\end{equation}
We finally check the strong convergence of the time derivatives.
Since $\partial_tw_m=a_m+b_m$, where
\[
a_m:=\chi\,\partial_tu_m
\in L^{p'}(I_*;W^{-1,p'}(\Omega)),\qquad
b_m:=\chi' u_m
\in L^{p'}(I_*;L^2(\Omega)),
\]
the proof of \cite[Lemma~A.1]{DPP}, applied separately to these two
components, gives
\[
\rho_\varepsilon*a_m
\longrightarrow a_m
\qquad
\text{strongly in }L^{p'}(I_*;W^{-1,p'}(\Omega)),
\]
as $\varepsilon\to 0^+$. On the other hand, $\supp b_m\subset\supp\chi'$, hence it
is separated by a positive distance from $[0,T]$. Hence, for
$\varepsilon$ sufficiently small, we have $(\rho_\varepsilon*b_m)|_{\Omega_T}=0$.
Since $\chi=1$ in a neighbourhood of $[0,T]$, one has
$a_m=\partial_tu_m$ on $\Omega_T$, and then
\begin{equation}\label{eq:density-DPP-time-short}
\partial_t\varphi_{m,\varepsilon}
\longrightarrow
\partial_tu_m
\qquad
\text{strongly in }L^{p'}(0,T;W^{-1,p'}(\Omega)).
\end{equation}

Combining
\eqref{eq:density-DPP-spatial-short},
\eqref{eq:density-DPP-H-short}, and
\eqref{eq:density-DPP-time-short}, we obtain, for every fixed $m$,
\begin{equation}\label{eq:density-fixed-m-short}
\varphi_{m,\varepsilon}
\longrightarrow u_m
\qquad
\text{strongly in }W_M(\Omega_T)\qquad\text{as $\varepsilon\to0^+$.}
\end{equation}

\medskip
\noindent
\textbf{Step 3: diagonal choice.}
By \eqref{eq:density-cutoff-WM-short}, choose a sequence $m_j\to\infty$ such that
\[
\|u_{m_j}-u\|_{W_M(\Omega_T)}
\leq
2^{-j}.
\]
For each $j$, use
\eqref{eq:density-fixed-m-short} to choose
$\varepsilon_j>0$ sufficiently small that
\[
\|\varphi_{m_j,\varepsilon_j}-u_{m_j}\|_{W_M(\Omega_T)}
\leq
2^{-j};
\]
setting $\varphi_j
:=
\varphi_{m_j,\varepsilon_j}\in\mathscr D$, we conclude that
\[
\|\varphi_j-u\|_{W_M(\Omega_T)}\leq2^{1-j}\longrightarrow0
\]
and this proves the density of $\mathscr D$ in $W_M(\Omega_T)$.

\subsection*{Acknowledgments}
The author is grateful to P. Baroni for several discussions on the subject, and helpful suggestions.
L.~Braglia is supported by the joint Ph.D.\ program of the
University of Ferrara, the University of Modena and Reggio Emilia,
and the University of Parma.

\subsection*{Declaration on the use of AI} The author used ChatGPT-5.6 Sol for language editing and checks of mathematical correctness and notational consistency; the mathematical arguments and conclusions were developed and verified by the author. The author takes full responsibility for the content of the manuscript including any remaining errors.


\begin{thebibliography}{HNVW}

\bibitem[A]{A}
D.~R.~Adams.
\newblock A note on Riesz potentials,
\newblock {\em Duke Math. J.} {\bf 42} (1975), 765--778.


\bibitem[AdSF]{AdSF}
M.~F. de Almeida and E.~P. dos Santos Filho.
\newblock Nonlinear parabolic thin sets and parabolic Wolff's
inequality.
\newblock \emph{Math. Ann.} \textbf{395} (2026), Paper No.~76.

\bibitem[AKP]{AKP}
B.~Avelin, T.~Kuusi, and M.~Parviainen.
\newblock Variational parabolic capacity.
\newblock \emph{Discrete Contin. Dyn. Syst.} \textbf{35} (2015),
no.~12, 5665--5688.

\bibitem[B14]{B14a}
P.~Baroni.
\newblock Marcinkiewicz estimates for degenerate parabolic equations
with measure data.
\newblock \emph{J. Funct. Anal.} \textbf{267} (2014), no.~9,
3397--3426.

\bibitem[B17]{B17}
P.~Baroni.
\newblock Singular parabolic equations, measures satisfying density
conditions, and gradient integrability.
\newblock \emph{Nonlinear Anal.} \textbf{153} (2017), 89--116.

\bibitem[BESS]{BESS}
M.~Borowski, T.~Elenius, L.~Sch\"atzler, and D.~Stolnicki.
\newblock Carleson-type removability for $p$-parabolic equations.
\newblock Preprint, \url{arXiv:2512.15277}, 2025.

\bibitem[BD]{BD}
T.~A. Bui and X.~T. Duong.
\newblock Global Marcinkiewicz estimates for nonlinear parabolic
equations with nonsmooth coefficients.
\newblock \emph{Ann. Sc. Norm. Super. Pisa Cl. Sci. (5)}
\textbf{18} (2018), no.~3, 881--916.

\bibitem[BG]{BG}
L.~Boccardo and T.~Gallou{\"e}t.
\newblock Non-linear elliptic and parabolic equations involving
measure data.
\newblock \emph{J. Funct. Anal.} \textbf{87} (1989), no.~1,
149--169.

\bibitem[BM]{BM}
D.~Blanchard and F.~Murat.
\newblock Renormalised solutions of nonlinear parabolic problems with
$L^1$ data: existence and uniqueness.
\newblock \emph{Proc. Roy. Soc. Edinburgh Sect. A}
\textbf{127} (1997), 1137--1152.

\bibitem[COV]{COV}
C.~Cascante, J.~M.~Ortega, and I.~E.~Verbitsky.
\newblock Trace inequalities of Sobolev type in the upper triangle
case.
\newblock \emph{Proc. Lond. Math. Soc. (3)}
\textbf{80} (2000), no.~2, 391--414.

\bibitem[DiB]{DiB}
E.~DiBenedetto.
\newblock \emph{Degenerate Parabolic Equations}.
\newblock Universitext.
\newblock Springer-Verlag, New York, 1993.

\bibitem[DPP]{DPP}
J.~Droniou, A.~Porretta, and A.~Prignet.
\newblock Parabolic capacity and soft measures for nonlinear
equations.
\newblock \emph{Potential Anal.} \textbf{19} (2003), no.~2,
99--161.

\bibitem[EG]{EG}
L.~C. Evans and R.~F. Gariepy.
\newblock \emph{Measure Theory and Fine Properties of Functions}.
\newblock Revised edition, Textbooks in Mathematics.
\newblock Chapman \& Hall/CRC, New York, 2015.

\bibitem[HW]{HW}
L.~I. Hedberg and T.~H. Wolff.
\newblock Thin sets in nonlinear potential theory.
\newblock \emph{Ann. Inst. Fourier (Grenoble)}
\textbf{33} (1983), no.~4, 161--187.

\bibitem[HNVW]{HNVW}
T.~Hyt\"onen, J.~van Neerven, M.~Veraar, and L.~Weis.
\newblock \emph{Analysis in Banach Spaces. Volume I:
Martingales and Littlewood--Paley Theory}.
\newblock Ergebnisse der Mathematik und ihrer Grenzgebiete.
3.~Folge. A Series of Modern Surveys in Mathematics, vol.~63.
\newblock Springer, Cham, 2016.

\bibitem[JLL]{JLL}
J.-L.~Lions.
\newblock \emph{Quelques m\'ethodes de r\'esolution des probl\`emes
aux limites non lin\'eaires}.
\newblock Dunod; Gauthier-Villars, Paris, 1969.

\bibitem[JL]{JL}
J.~L. Lewis.
\newblock Uniformly fat sets.
\newblock \emph{Trans. Amer. Math. Soc.}
\textbf{308} (1988), no.~1, 177--196.

\bibitem[KK]{KK}
J.~Kinnunen and R.~Korte.
\newblock Characterizations for the Hardy inequality.
\newblock In A.~Laptev, editor,
\emph{Around the Research of Vladimir Maz'ya I: Function Spaces},
volume~11 of \emph{International Mathematical Series},
pages 239--254.
\newblock Springer, New York, 2010.

\bibitem[KKKP]{KKKP}
J.~Kinnunen, R.~Korte, T.~Kuusi, and M.~Parviainen.
\newblock Nonlinear parabolic capacity and polar sets of
superparabolic functions.
\newblock \emph{Math. Ann.} \textbf{355} (2013), no.~4,
1349--1381.

\bibitem[KM13]{KM13}
T.~Kuusi and G.~Mingione.
\newblock Gradient regularity for nonlinear parabolic equations.
\newblock \emph{Ann. Sc. Norm. Super. Pisa Cl. Sci. (5)}
\textbf{12} (2013), no.~4, 755--822.

\bibitem[KM14]{KM14}
T.~Kuusi and G.~Mingione.
\newblock The Wolff gradient bound for degenerate parabolic equations.
\newblock \emph{J. Eur. Math. Soc.}
\textbf{16} (2014), no.~4, 835--892.

\bibitem[PM]{PM}
P.~Mattila.
\newblock \emph{Geometry of Sets and Measures in Euclidean Spaces}.
\newblock Cambridge Studies in Advanced Mathematics, vol.~44.
\newblock Cambridge University Press, Cambridge, 1995.

\bibitem[M07]{M1}
G.~Mingione.
\newblock The Calder\'on-Zygmund theory for elliptic problems with measure data.
\newblock \emph{Ann. Scuola Norm. Sup. Pisa Cl. Sci. (5)} \textbf{6} (2007), 195--261.

\bibitem[M11]{M2}
G.~Mingione.
\newblock Nonlinear measure data problems, 
\newblock \emph{Milan J. Math} \textbf{79} (2011), no. 2, 429--496.



\bibitem[MS]{MS}
K.~Moring and C.~Scheven.
\newblock On notions of $p$-parabolic capacity and applications.
\newblock \emph{Potential Anal.} \textbf{63} (2025), 895--946.

\bibitem[Pa]{Pa}
J.-T.~Park.
\newblock Marcinkiewicz regularity for singular parabolic
$p$-Laplace type equations with measure data.
\newblock \emph{Nonlinear Anal.} \textbf{223} (2022),
Paper No.~113073.


\bibitem[P]{P}
A.~Prignet.
\newblock Existence and uniqueness of ``entropy'' solutions of
parabolic problems with $L^1$ data.
\newblock \emph{Nonlinear Anal.} \textbf{28} (1997), 1943--1954.

\bibitem[V]{V}
I.~E. Verbitsky.
\newblock Nonlinear potentials and trace inequalities.
\newblock In J.~Rossmann et~al., editors, \emph{The Maz'ya Anniversary Collection}, vol.~110 of \emph{Operator Theory: Adv. Appl.}, pp. 323--343. Birkh\"auser, 1999.

\bibitem[Z]{Z}
W.~P. Ziemer.
\newblock \emph{Weakly Differentiable Functions:
Sobolev Spaces and Functions of Bounded Variation}.
\newblock Graduate Texts in Mathematics, vol.~120.
\newblock Springer-Verlag, New York, 1989.

\end{thebibliography}
\end{document}